\def\vt{\vartheta}
\def\Cov{{\rm Cov\,}}
\newcommand{\dlines}{\displaylines}
\newcommand{\field}[1]{\mathbb{#1}}
\newcommand{\R}{\field{R}}
\renewcommand{\th}{\theta}
\newcommand{\X}{\mathscr{X}}
\newcommand{\Z}{\field{Z}}
\newcommand{\bS}{\field{S}}
\newcommand{\C}{\field{C}}
\newcommand{\Var}{{\rm Var}}
\newcommand{\F}{{\mathscr{F}}}
\newcommand{\B}{{\mathscr B}}
\newcommand{\cS}{{\mathcal S}}
\newcommand{\goto}{{\longrightarrow}}
\def\tin#1{\par\noindent\hskip3em\llap{#1\enspace}\ignorespaces}
\def\cS{{\field S}}
\def\E{{\mathbb{ E}}}
\def\P{{\mathbb{P}}}
\def\F{{\mathscr{F}}}
\def\mE{\mathscr E}
\def\paref#1{(\ref{#1})}
\def\tfrac#1#2{{\textstyle\frac {#1}{#2}}}
\newtheorem{theorem}{Theorem}[section]
\newtheorem{rema}[theorem]{Remark}
\numberwithin{equation}{section}
\newtheorem{cor}[theorem]{Corollary}
\newtheorem{definition}[theorem]{Definition}
\newtheorem{example}[theorem]{Example}
\newtheorem{lemma}[theorem]{Lemma}
\newtheorem{prop}[theorem]{Proposition}
\newtheorem{remark}[theorem]{Remark}
\def\tr{\mathop{\rm tr}}
\begin{document}

\title{\Huge\sc Representation of Gaussian Isotropic Spin Random Fields}

\author{Paolo Baldi, Maurizia Rossi\footnote{Research Supported by ERC Grant 277742 \it{Pascal}}\\
{\it Dipartimento di Matematica, Universit\`a
di Roma Tor Vergata, Italy}\\
} \date{}\maketitle

\begin{abstract}
We develop a technique for the construction of random fields on
algebraic structures.
We deal with two general situations: random fields on homogeneous
spaces of a compact group and in the spin line bundles of the
$2$-sphere. In particular, every complex Gaussian isotropic spin random field can be represented in this way.
Our construction extends P. L\'evy's original idea for the spherical Brownian motion.
\end{abstract}

\noindent{\it AMS 2000 subject classification:} 60G60, 33C55, 57T30
\smallskip

\noindent{\it Key words and phrases:} Gaussian random fields, spherical harmonics, line bundles, induced representations

\section{Introduction}

In recent years the investigation of random fields on algebraic structures has received a renewed
interest from a theoretical point of view (\cite{PePy}), but mainly motivated by applications concerning the
modeling of the Cosmic Microwave Background data (see \cite{BM06},  \cite{MR2578835}, \cite{MA:2005}
 e.g. and also the book \cite{dogiocam}). Actually one of the features of this radiation,
the {\it temperature}, is well modeled as a single realization of a random field on the sphere $\cS^2$.

Moreover thanks to the ESA satellite Planck mission, new data concerning the {\it polarization}
of the CMB will soon be available and the modeling of this quantity has led quite naturally to the
investigation of spin random fields on $\cS^2$, a
subject that has already received much attention (see \cite{marinuccigeller},\cite{bib:LS}, \cite{bib:M} and again \cite{dogiocam} chapter 12 e.g.).

The object of this paper is the investigation of Gaussian isotropic
 random fields or, more precisely random sections, in the homogeneous line
bundles of $\cS^2$. In this direction we first investigate the simpler situation
of random fields on the homogeneous space $\X=G/K$ of a compact group $G$. Starting from
P. L\'evy's construction of his ``mouvement brownien fonction d'un point de la sph\`ere de
{R}iemann'' in \cite{bib:L}, we prove that to every square integrable bi-$K$-invariant
function $f:G\to\R$ a real Gaussian isotropic random field on $\X$ can be associated
 and also that every real Gaussian
isotropic  random field on $\X$ can be obtained in this way.

Then, turning to our main object, we prove first that every random section of a homogeneous
vector bundle on $\X$ is actually ``equivalent'' to a random field on the group $G$,
enjoying a specific pathwise invariance property. This fact is of great importance as it reduces the
investigation of random sections of the homogeneous vector bundle to a much simpler situation and is our key tool for the results that follow.

We are then able to give a method for constructing Gaussian isotropic random sections of the
homogeneous line bundles of $\cS^2$ and prove that every complex Gaussian isotropic  random
section can actually be obtained in this way. More precisely given $s\in \Z$,
we prove that to every function $f:SO(3)\to\C$ which is bi-$s$-associated, i.e. that
transforms under both the right and left action of
the isotropy group $K\simeq SO(2)$ according to its $s$-th linear character,
one can associate a Gaussian isotropic random section
of the $s$-homogeneous line bundle and also that every complex Gaussian
isotropic random section is
represented in this way. In some sense this extends the representation result for Gaussian isotropic  random
fields on homogeneous spaces described above: a bi-$K$-invariant function
being associated to the $0$-th
character and a random field on $\cS^2$ being also a
random section of the $0$-homogeneous line bundle.

In \cite{bib:L} P. L\'evy proves the existence on the spheres $\cS^m$, $m\ge 1$,
of a Gaussian random field $T$ such that $T_x-T_y$ is normally distributed
with variance $d(x,y)$, $d$ denoting the distance on $\cS^m$.
The existence of such a random field on a more general Riemannian manifold
has been the object of a certain number of papers
(\cite{mauSO3}, \cite{bib:G}, \cite{bib:TKU} e.g.).
We have given some thought about what a P. L\'evy random section should be.
We think that our treatment should be very useful in this direction but we
have to leave this as an open question.

The plan is as follows. In \S2 we recall general facts concerning
Fourier expansions on the homogeneous space $\X$ of a compact group $G$. In \S3 we introduce the topic
of isotropic random fields on $\X$ and their relationship with positive definite functions enjoying certain invariance properties. These facts are the basis for the
representation results for real Gaussian isotropic random fields
on $\X$ which are obtained in \S4.

In \S5 we introduce the subject of random sections of homogeneous vector bundles on $\X$ and in particular in \S6 we consider the case $\X=\cS^2$. We were much inspired here by the approach of \cite{bib:M}, but the introduction of  pullback
random fields considerably simplifies the understanding of the notions that are involved.

In \S7 we extend  the construction of \S4 as described above and prove the
representation result for complex Gaussian isotropic random sections of
the homogeneous line bundles of $\cS^2$.
When dealing with such random sections different approaches are available in the literature.
They are equivalent but we did not find a formal proof of this equivalence. 
So \S8 is devoted to the comparison with other constructions 
(\cite{marinuccigeller}, \cite{bib:LS}, \cite{bib:M},
\cite{bib:NP}). In particular we show that the construction of spin random fields on $\cS^2$ of \cite{marinuccigeller} is actually
equivalent to ours, the difference being that their point of view is based on an accurate description of local charts and transition functions, instead of our more global perspective.
\section{Fourier expansions}
Throughout this paper $\X$ denotes the (compact) homogeneous space
of a topological \emph{compact} group $G$. Therefore $G$ acts
transitively on $\X$ with an action $x\mapsto gx, g\in G$. $\B(\X)$,
$\B(G)$ stand for the Borel $\sigma$-fields of $\X$ and $G$
respectively. We shall denote $dg$ the Haar measure of $G$ and $dx$
the corresponding $G$-invariant measure on $\X$ and assume, unless explicitly stated, that both these measures have total mass equal to $1$. Actually at some places (mainly in \S6, \S7 and \S8) we consider for $\cS^2$ the total mass $4\pi$ in order to
be consistent with the existing literature.
We shall write $L^2(G)$ for $L^2(G, dg)$ and similarly $L^2(\X)$
for $L^2(\X, dx)$. Unless otherwise stated the $L^2$-spaces are
spaces of \emph{complex valued} square integrable functions.

We fix once forever a point $x_0\in\X$ and denote $K$ the
isotropy group of $x_0$, i.e. the subgroup of $G$ of the elements $g$
such that $gx_0=x_0$, so that $\X\cong G/K$. In the case
$G=SO(3)$, $\X=\cS^2$, $x_0$ will be  the north pole, as usual, and $K\cong SO(2)$.

Given a function $f:\X\to\C$ we define its \emph{pullback} as
\begin{equation}
\widetilde f(g) := f(gx_0)
\end{equation}
which is a $G\to\C$ right-$K$-invariant function, i.e.
constant on left cosets of $K$. By the
integration rule of image measures it holds
\begin{equation}\label{int-rule}
\int_\X f(x)\, dx=\int_G \widetilde f(g)\, dg\ ,
\end{equation}
whenever one of the integrals has sense.

We denote $L$ the \emph{left regular representation} of $G$ on
$L^2(G)$ given by $L_g f(h):= f(g^{-1}h)$ for  $g\in G$ and $f\in
L^2(G)$, and $\widehat G$ the \emph{dual} of $G$, i.e., the set of
the equivalence classes of irreducible unitary representations of
$G$. As $G$ is assumed to be compact,  $\widehat G$ is at most
countable.

For every $\sigma\in \widehat G$ let $(D^\sigma, H_\sigma)$ be a representative element
where the (unitary) operator $D^\sigma$ acts
irreducibly on the complex (finite dimensional) Hilbert space
$H_\sigma$. We denote $\langle \cdot, \cdot \rangle$ the inner
product of $H_\sigma$ and $\dim\sigma=\dim H_\sigma$.

Given $f\in
L^2(G)$, for every $\sigma\in \widehat G$ we define
\begin{equation}\label{Fourier coefficient}
\widehat f(\sigma) :=\sqrt{\dim \sigma}\int_G f(g)
D^\sigma(g^{-1})\,dg
\end{equation}
which is a linear endomorphism of $H_\sigma$. It is immediate that, denoting $*$ the convolution on $G$,
\begin{equation}\label{conv1}
\widehat{f_1\ast f_2}(\sigma)=\frac 1{\sqrt{\dim \sigma}}\,\widehat
f_2(\sigma) \widehat f_1(\sigma)\ .
\end{equation}
The Peter-Weyl Theorem (see \cite{dogiocam} or \cite{sugiura} e.g.)
can be stated as
\begin{equation}\label{PW for compact groups}
f(g) = \sum_{\sigma\in \widehat G} \sqrt{\dim \sigma} \tr\bigl(\widehat
f(\sigma) D^\sigma(g)\bigr)\ ,
\end{equation}
the convergence of the series taking place in $L^2(G)$.
Fix any orthonormal basis $v_1, v_2, \dots, v_{\dim\sigma }$ of
$H_\sigma$ and denote $D^\sigma_{ij}(g):= \langle D^\sigma(g) v_j, v_i\rangle$
the $(i,j)$-th coefficient of
the matrix representation of $D^\sigma(g)$ with respect to this
basis, then the matrix representation of $\widehat f(\sigma)$ has entries
$$
\dlines{\widehat f(\sigma)_{i,j}=\langle \widehat f(\sigma) v_j,
v_i\rangle = \sqrt{\dim \sigma} \int_G f(g) \langle D^\sigma(g^{-1})
v_j, v_i\rangle \,dg = \cr = \sqrt{\dim \sigma} \int_G f(g)
D^\sigma_{i,j}(g^{-1})\,dg = \sqrt{\dim \sigma} \int_G f(g)
\overline{D^\sigma_{j,i}(g)}\,dg \cr }
$$
and the Peter-Weyl Theorem (\ref{PW for compact groups}) becomes
\begin{equation}\label{PW1}
f(g)=\sum_{\sigma\in \widehat G} \sqrt{\dim \sigma} \sum_{i,j=1}^{\dim\sigma
} \widehat f(\sigma)_{j,i} D^\sigma_{i,j}(g)\ ,
\end{equation}
the above series converging in $L^2(G)$.
Let $L^2_\sigma(G)\subset L^2(G)$ be the isotypical subspace of $\sigma\in\widehat G$,
 i.e. the subspace generated by the functions $D^\sigma_{i,j}$; it is a $G$-module that can be decomposed into the
orthogonal direct sum of $\dim\sigma $ irreducible and
equivalent $G$-modules
$(L^2_{\sigma,j}(G))_{j=1,\dots,\dim\sigma }$ where each
$L^2_{\sigma,j}(G)$ is spanned by the functions
$D^\sigma_{i,j}$ for $i=1,\dots, \dim\sigma $, loosely
speaking by the $j$-th column of the matrix $D^\sigma$.

If we denote
\begin{equation}\label{component}
f^\sigma(g)=\sqrt{\dim \sigma} \tr(\widehat f(\sigma) D^\sigma(g))
\end{equation}
the component (i.e. the projection) of $f$ in $L^2_\sigma(G)$, then \paref{PW for compact groups} and \paref{PW1} become
\begin{equation}\label{PWdiverso}
f(g)=\sum_{\sigma\in\widehat G}f^\sigma(g)
\end{equation}
 or equivalently the Peter-Weyl Theorem can be stated as
\begin{equation}\label{PW2}
L^2(G)= \bigoplus_{\sigma \in \widehat G} \,\, \mathop{\oplus}\limits_{j=1}^{\dim\sigma }
L^2_{\sigma,j}(G)\ ,
\end{equation}
the direct sums being orthogonal.

The Fourier expansion of functions $f\in L^2(\X)$ can be obtained
easily just remarking that, thanks to \paref{int-rule}, their pullbacks $\widetilde f$
belong to $L^2(G)$ and form a $G$-invariant closed subspace of $L^2(G)$.
We can therefore associate to $f\in L^2(\X)$ the family of operators $\bigl(
\widehat {\widetilde f}(\sigma) \bigr)_{\sigma\in \widehat G}$.
Let $H_{\sigma,0}$ denote the subspace of $H_\sigma$ (possibly
reduced to $\{0\}$) formed by the vectors that remain fixed under the action of
$K$, i.e. for every $k\in K, v\in
H_{\sigma,0}$ $D^\sigma(k)v=v$.
Right-$K$-invariance implies that the image of $\widehat
{\widetilde f}(\sigma)$ is contained in $H_{\sigma,0}$:
\begin{equation}\label{proiezione triviale}
\begin{array}{c}
\displaystyle\widehat {\widetilde f}(\sigma) =\sqrt{\dim \sigma}
\int_G \widetilde f(g) D^\sigma(g^{-1})\,dg=\sqrt{\dim \sigma}
\int_G \widetilde f(gk) D^\sigma(g^{-1})\,dg=\\
\displaystyle= \sqrt{\dim \sigma} \int_G\widetilde f(h)
D^\sigma(kh^{-1} )\,dh = D^\sigma(k)\sqrt{\dim \sigma}
\int_G\widetilde f(h) D^\sigma(h^{-1} )\,dh=D^\sigma(k)\widehat
{\widetilde f}(\sigma)\ .
\end{array}
\end{equation}
Let us denote by $P_{\sigma,0}$ the projection of $H_\sigma$ onto
$H_{\sigma,0}$, so that $\widehat {\widetilde
f}(\sigma)=P_{\sigma,0}\widehat {\widetilde f}(\sigma)$,
and $\widehat G_0$ the set of irreducible unitary
representations of $G$ whose restriction to $K$ contains the trivial
representation.
If $\sigma\in \widehat G_0$ let us consider a basis of $H_\sigma$
such that the elements $\lbrace v_{p+1}, \dots , v_{\dim\sigma} \rbrace$,
for some  integer $p\ge 0$, span $H_{\sigma,0}$. Then the first $p$ rows
of the representative matrix of $\widehat {\widetilde f}(\sigma)$ in this
basis contain only zeros. Actually, by
(\ref{proiezione triviale}) and $P_{\sigma,0}$ being self-adjoint,
for $i\le p$
$$
\widehat {\widetilde f}_{i,j}(\sigma) = \langle \widehat {\widetilde
f}(\sigma)  v_j, v_i \rangle = \langle P_{\sigma,0} \widehat
{\widetilde f}(\sigma)   v_j, v_i \rangle=\langle \widehat
{\widetilde f}(\sigma)   v_j, P_{\sigma,0}v_i \rangle= 0\ .
$$
Recall that
a function $f\in L^2(G)$ is \emph{bi-$K$-invariant} if for every $g\in G, k_1, k_2\in K$
\begin{equation}\label{bicappa}
f(k_1gk_2)=f(g)\ .
\end{equation}
Condition \paref{bicappa} immediately entails that, for every
$k_1,k_2\in K$, $\sigma\in\widehat G$,
$$
\widehat f(\sigma)=D^\sigma(k_1)\widehat f(\sigma)D^\sigma(k_2)
$$
and it is immediate that a function $f\in L^2(G)$ is {bi-$K$-invariant} if and only if
for every $\sigma\in \hat G$
\begin{equation}\label{fourier-bicappa}
\widehat f(\sigma) = P_{\sigma,0} \widehat f(\sigma) P_{\sigma,0}\ .
\end{equation}
Now we briefly focus on the case $\X=\cS^2$ under the action of $G=SO(3)$ recalling basic facts we will need in the sequel (see \cite{faraut}, \cite{dogiocam} e.g. for further details). The isotropy
group $K\cong SO(2)$ of the north pole  is abelian, therefore its unitary irreducible representations are unitarily equivalent to its linear characters which we shall denote  $\chi_s, s\in \Z$, throughout the whole paper.

A complete set of unitary irreducible matrix representations of $SO(3)$  is given by  the so-called Wigner's $D$ matrices
$\lbrace D^\ell, \ell \ge 0 \rbrace$, where each $D^\ell(g)$ has dimension $(2\ell+1)\times (2\ell+1)$ and acts on a representative space that we shall denote $H_\ell$. The restriction to $K$ of each $D^\ell$ being unitarily equivalent to the direct sum of the representations $\chi_m$, $m=-\ell, \dots, \ell$, we can suppose
$v_{-\ell}, v_{-\ell +1}, \dots, v_\ell$ to be an orthonormal basis for $H_\ell$ such that
for every $m : |m| \le \ell$
\begin{equation}\label{restrizione}
D^\ell(k)v_m = \chi_m(k)v_m\ , \qquad k\in K\ .
\end{equation}
Let $D^\ell_{m,n}=\langle D^\ell v_n, v_m \rangle$ be the $(m,n)$-th entry of $D^\ell$ with respect to the basis fixed above.
It follows from (\ref{restrizione})  that for every $g\in SO(3), k_1, k_2\in K$,
\begin{equation}\label{prop fnz di Wigner}
D^\ell_{m,n}(k_1gk_2) = \chi_m(k_1)D^\ell_{m,n}(g)\chi_n(k_2)\ .
\end{equation}
The functions on $SO(3)$ $\lbrace g\to D^\ell_{m,n}(g), \, \ell\ge 0, m,n=-\ell,\dots,\ell \rbrace$
are usually called Wigner's $D$ functions.

Given $f\in L^2(SO(3))$,  its $\ell$-th Fourier coefficient defined in (\ref{Fourier coefficient}) is
\begin{equation}\label{coefficiente ellesimo}
\widehat f(\ell) := \sqrt{2\ell+1}\,\int_{SO(3)} f(g)
D^\ell(g^{-1})\,dg
\end{equation}
and
(\ref{PW1}) becomes
\begin{equation}\label{PW SO(3)}
f(g)=\sum_{\ell \ge 0} \sqrt{2\ell + 1} \sum_{m.n=-\ell}^{\ell
} \widehat f(\ell)_{n,m} D^\ell_{m,n}(g)\ .
\end{equation}
If $\widetilde f$ is the pullback  of  $f\in L^2(\cS^2)$, (\ref{restrizione}) entails that for every $\ell\ge 0$
$$
\widehat{\widetilde f}(\ell)_{n,m} \ne 0 \iff n= 0\ .
$$
Moreover if $f$ is left-$K$-invariant, then
$$
\widehat{\widetilde f}(\ell)_{n,m} \ne 0 \iff n,m = 0\ .
$$
In other words, an orthogonal basis for the space of the square integrable right-$K$-invariant functions on $SO(3)$ is given by the central columns  of the matrices $D^\ell$, $\ell \ge 0$. Furthermore
the subspace of the bi-$K$-invariant functions is spanned by the  central functions $D^\ell_{0,0}(\cdot),\, \ell \ge 0$, which are also \emph{real-valued}.

The important role of the other columns of Wigner's $D$ matrices will appear in \S 6.

For every $\ell \ge 0, m=-\ell \dots, \ell$,  let
\begin{equation}\label{armoniche sferiche1}
Y_{\ell,m}(x) := \sqrt{\frac{2\ell+1}{4\pi}}\, \overline{D^\ell_{m,0}(g_x)}\ , \qquad x\in \cS^2\ ,
\end{equation}
where $g_x$ is any rotation mapping the north pole of the sphere to $x$. This is a good definition thanks to the
invariance of each $D^\ell_{m,0}(\cdot)$ under the right action of $K$.
The functions in (\ref{armoniche sferiche1}) form an orthonormal basis of the space $L^2(\cS^2)$ considering the sphere with total mass equal to $4\pi$.  They
are usually known as \emph{spherical harmonics}.

By the previous discussion the Fourier expansion of a left-$K$-invariant function $f\in L^2(\cS^2)$ is
\begin{equation}\label{PW invariant sphere}
f=\sum_\ell \beta_\ell Y_{\ell,0}\ ,
\end{equation}
where $\beta_\ell := \int_{\cS^2} f(x) Y_{\ell,0}(x)\,dx$. The functions $Y_{\ell,0},\,\ell\ge 0$ are called  \emph{central spherical harmonics}.

\section{Isotropic random fields and positive definite functions}
Let $T=(T_x)_{x \in \X}$ be a  random field on the $G$-homogeneous space
$\X$, i.e. a collection of complex-valued r.v.'s defined
on some probability space $(\Omega, \F, \P)$, such that the map $
(\omega, x) \mapsto T_x(\omega) $ is $\F \otimes \B(\X)$-measurable.

$T$ is said to be \emph{a.s. continuous} if the functions $\X\ni
x\mapsto T_x$ are continuous a.s. $T$ is said to be \emph{second
order} if $T_x\in L^2(P)$ for every $x\in \X$. $T$ is  \emph{a.s.
square integrable} if
\begin{equation}\label{square integrable}
\int_\X |T_x|^2\,dx < +\infty,\qquad a.s.\ ,
\end{equation}
i.e. if the function $x\mapsto T_x$ belongs to $L^2(\X)$ a.s. In this
case, for every $f\in L^2(\X)$, we can consider the integral
$$
T(f):=\int_\X T_x \overline{f(x)}\,dx
$$
which defines a r.v. on $(\Omega, \F, \P)$.
For every $g\in G$ let  $T^g$ be the \emph{rotated field} defined as
$$
T^g_x:=T_{gx},\qquad x\in \X\ .
$$
\begin{definition}\label{invarian}
An a.s. square integrable random field $T$ on the homogeneous space
$\X$ is said to be (strict sense) \emph{$G$-invariant} or
\emph{isotropic} if and only if
the joint laws of
\begin{equation}\label{l2-invar}
(T(f_1),\dots,T(f_m))\qquad\mbox{and}\qquad (T(L_gf_1),\dots,T(L_gf_m))=
(T^g(f_1),\dots,T^g(f_m))
\end{equation}
coincide for every $g\in G$ and $f_1,f_2,\dots,f_m\in L^2(\X)$.
\end{definition}
This definition is somehow different from the one usually considered in the literature, where
the requirement is the equality of the finite dimensional distributions, i.e. that the random vectors
\begin{equation}\label{invar-continui1}
(T_{x_1},\dots,T_{x_m})\qquad\mbox{and}\qquad(T_{gx_1},\dots,T_{gx_m})
\end{equation}
have the same law for every choice of $g\in G$ and
$x_1,\dots,x_m\in \X$. Remark that (\ref{l2-invar})
implies (\ref{invar-continui1}) (see \cite{mp-continuity}) and
that, conversely, by standard approximation arguments
(\ref{invar-continui1}) implies (\ref{l2-invar}) if $T$ is
continuous.

To every a.s. square integrable random field $T$ on the group $G$ we
can associate the set of operator-valued r.v.'s $(\widehat
{T}(\sigma))_{\sigma\in \widehat G}$ defined ``pathwise'' as
\begin{equation}
\widehat { T}(\sigma) = \sqrt{\dim\sigma}\int_G T_g D^\sigma(g^{-1})\,dg\ .
\end{equation}
From  (\ref{PW for compact groups}) therefore
\begin{equation}
T_g = \sum_{\sigma\in \widehat G} \sqrt{\dim\sigma} \tr(\widehat {
T}(\sigma) D^\sigma(g))\ ,
\end{equation}
where the convergence takes place in $L^2(G)$ a.s.

The following statement points out a remarkable property that is
enjoyed by every isotropic and second order random field (see
\cite{mp-continuity}).
\begin{prop}\label{mean square continuous}
Every a.s. square integrable, isotropic and second order random field $T$
on the homogeneous space $\X$ of the compact group $G$
is \emph{mean square continuous}, i.e.
\begin{equation}
\lim_{y\to x} \E[ |T_y - T_x|^2] = 0\ .
\end{equation}
\end{prop}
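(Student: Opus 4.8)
The plan is to reduce mean square continuity to the continuity of the covariance kernel on the diagonal, and then to obtain the latter from a spectral expansion forced by isotropy, combined with the finiteness of the variance.

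First I would write $\E[|T_y-T_x|^2]=R(y,y)+R(x,x)-2\,\mathrm{Re}\,R(x,y)$, where $R(x,y):=\E[T_x\overline{T_y}]$ is well defined because $T$ is second order. Isotropy, in the pointwise form \paref{invar-continui1} (which, as recalled in the excerpt, follows from Definition \ref{invarian}), gives $R(gx,gy)=R(x,y)$ for every $g\in G$; by transitivity $R(x,x)$ is constant, equal to $c:=\E[|T_{x_0}|^2]<+\infty$. Hence $\E[|T_y-T_x|^2]=2c-2\,\mathrm{Re}\,R(x,y)$, and the whole statement reduces to showing that $R$ is continuous on $\X\times\X$, since then $R(x,y)\to R(x,x)=c$ as $y\to x$.

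To analyze $R$ I would pass to the pullback field $\widetilde T_g:=T_{gx_0}$, which is a.s. in $L^2(G)$ and, by isotropy, invariant in law under left translations (one checks $\widetilde{T^g}=L_{g^{-1}}\widetilde T$). Working with the pathwise Fourier coefficients $\widehat{\widetilde T}(\sigma)$, left invariance turns into $\widehat{\widetilde T}(\sigma)\mapsto\widehat{\widetilde T}(\sigma)D^\sigma(g^{-1})$ under a rotation, so the second moments $\E[\widehat{\widetilde T}(\sigma)_{ij}\overline{\widehat{\widetilde T}(\sigma')_{kl}}]$ are invariant under simultaneous right multiplication by the $D^\sigma(g)$. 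Averaging over $g\in G$ and invoking the Schur orthogonality relations annihilates the cross terms $\sigma\neq\sigma'$ and forces, for each $\sigma$, the structure $\E[\widehat{\widetilde T}(\sigma)_{ij}\overline{\widehat{\widetilde T}(\sigma)_{kl}}]=A^\sigma_{ik}\,\delta_{jl}$, where $A^\sigma=\tfrac{1}{\dim\sigma}\,\E[\widehat{\widetilde T}(\sigma)\widehat{\widetilde T}(\sigma)^*]$ is positive semidefinite. Expanding $\widetilde T$ by Peter--Weyl and inserting this covariance, the matrix coefficients collapse by unitarity, $\sum_i D^\sigma_{ij}(g_x)\overline{D^\sigma_{il}(g_y)}=D^\sigma(g_y^{-1}g_x)_{lj}$, into
\[
R(x,y)=\sum_{\sigma\in\widehat G}\dim\sigma\,\tr\!\big(A^\sigma D^\sigma(g_y^{-1}g_x)\big),
\]
where $g_x,g_y$ are any rotations sending $x_0$ to $x,y$; right-$K$-invariance of $\widetilde T$ confines each $A^\sigma$ to the $H_{\sigma,0}$ block, so the expression descends to a well-defined function on $\X\times\X$.

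Finally I would extract summability from the variance: pathwise Parseval gives $\int_G|\widetilde T_g|^2\,dg=\sum_\sigma\|\widehat{\widetilde T}(\sigma)\|_{HS}^2$, and taking expectations with $\E|\widetilde T_g|^2\equiv c$ yields $\sum_\sigma\dim\sigma\,\tr A^\sigma=c<+\infty$, since $\dim\sigma\,\tr A^\sigma=\E\|\widehat{\widetilde T}(\sigma)\|_{HS}^2$. As $A^\sigma\ge0$ and $D^\sigma$ is unitary, $|\tr(A^\sigma D^\sigma(g))|\le\tr A^\sigma$, so the $\sigma$-th term of the series for $R$ is dominated in modulus by $\dim\sigma\,\tr A^\sigma$, a summable sequence independent of $(x,y)$. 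By the Weierstrass test the series converges uniformly; each partial sum is continuous on $G\times G$ and descends to $\X\times\X$, so $R$ is continuous and $R(x,y)\to c$ on the diagonal, giving the claim. The \emph{main obstacle} is the middle step: justifying the covariance structure $\E[\widehat{\widetilde T}(\sigma)_{ij}\overline{\widehat{\widetilde T}(\sigma)_{kl}}]=A^\sigma_{ik}\delta_{jl}$ with $A^\sigma\ge0$ and the vanishing of the cross terms, which is precisely where isotropy is used and must be handled carefully through the Schur relations. Once this is in place, the analytic point — upgrading $L^2$ convergence to the uniform convergence needed for continuity — is immediate, because the dominating constants $\dim\sigma\,\tr A^\sigma$ sum exactly to the finite variance $c$.
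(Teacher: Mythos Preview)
The paper does not prove this proposition at all; it merely states the result and cites the external reference \cite{mp-continuity}. So there is no in-paper argument to compare against, and your spectral approach must be assessed on its own merits.

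Your overall strategy is sound, but the step you label the ``main obstacle'' is not where the actual difficulty sits. The Schur argument giving $\E[\widehat{\widetilde T}(\sigma)_{ij}\overline{\widehat{\widetilde T}(\sigma')_{kl}}]=\delta_{\sigma\sigma'}A^\sigma_{ik}\delta_{jl}$ is routine once one notes that isotropy makes these second moments invariant under right multiplication by $D^\sigma(h)$ and averages over $h$. The genuine gap is the line ``expanding $\widetilde T$ by Peter--Weyl and inserting this covariance'': the Peter--Weyl expansion of $\widetilde T$ converges only in $L^2(G)$ a.s., so substituting it into $R(x,y)=\E[\widetilde T_{g_x}\overline{\widetilde T_{g_y}}]$ at \emph{fixed} points $g_x,g_y$ is not justified as written. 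Your uniformly convergent series therefore defines a continuous function equal to $R$ only almost everywhere, which does not yet force $R$ to be continuous at every point, and that is precisely what you need.

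The repair is short and uses isotropy once more. If $S_N(g)$ denotes the $N$-th partial sum of the Peter--Weyl series, then the rotation covariance of the Fourier coefficients gives that $\widetilde T_g-S_N(g)$ has the same law as $\widetilde T_{hg}-S_N(hg)$ for every $h$, so $\E[|\widetilde T_g-S_N(g)|^2]$ is constant in $g$; since its integral over $G$ tends to zero by $L^2(G\times\Omega)$ convergence, it tends to zero for \emph{every} $g$, and the pointwise identity for $R$ follows. Alternatively, one can bypass the spectral machinery entirely: isotropy gives
\[
\E[|\widetilde T_g-\widetilde T_e|^2]=\int_G\E[|\widetilde T_{hg}-\widetilde T_h|^2]\,dh=\E\Bigl[\int_G|\widetilde T_{hg}-\widetilde T_h|^2\,dh\Bigr],
\]
and since right translation is strongly continuous on $L^2(G)$ the inner integral tends to zero pathwise as $g\to e$, dominated by $4\int_G|\widetilde T_h|^2\,dh$ of finite expectation $4c$; dominated convergence finishes. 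This second route is closer in spirit to the cited reference and is considerably shorter than the spectral one.
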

Assume that $T$ is (i) a.s. square integrable, (ii) isotropic and (iii) second order.
Assume also that
$T$ is centered, which can be done without loss of generality. The
\emph{covariance kernel} $R:\X\times \X \to \C$ of $T$ is defined as
$$
R(x,y)=\Cov(T_x,T_y)=\E[T_x\overline{T_y}]\ .
$$
As $T$ is isotropic, $R$ is $G$-invariant, i.e. $R(gx, gy)=R(x,y)$ for
every $g\in G$, and also continuous thanks to Proposition \ref{mean
square continuous}. To $R$ we can associate the function
$\phi:G\to\C$
\begin{equation}\label{funzione fi}
\phi(g):=R(gx_0,x_0)\ .
\end{equation}
Such a function $\phi$ is

$\bullet$ continuous, as a consequence of the continuity of $R$.

$\bullet$ Bi-$K $-invariant: for every $k_1,k_2\in K $ and $g\in G$ we have,
using the invariance of $R$ and the fact that $k_ix_0=x_0, i=1,2$,
$$
\phi(k_1gk_2)=R(k_1gk_2x_0,x_0)=R(k_1gx_0,x_0) =R(gx_0,k_1^{-1}x_0)=R(gx_0,x_0)=\phi(g)\ .
$$

$\bullet$ Positive definite: as $R$ is a positive definite kernel, for every $g_1,\dots,g_m\in G$, $\xi_1,\dots,\xi_m\in \C$,
\begin{equation}\label{pd-for-phi}
\sum_{i,j} \phi(g_i^{-1}g_j)\overline{\xi_i} \xi_j =
\sum_{i,j} R(g_i^{-1}g_jx_0,x_0)\overline{\xi_i} \xi_j=
\sum_{i,j} R(g_jx_0,g_ix_0)\overline{\xi_i} \xi_j\ge 0\ .
\end{equation}
By standard approximation arguments \paref{pd-for-phi} is equivalent to

\begin{equation}\label{dis1}
\int_G \int_G \phi(h^{-1}g) f(h) \overline{f(g)}\,dg\,dh \ge 0
\end{equation}
for every continuous function $f$.
Finally $\phi$ determines the covariance kernel $R$: if $g_x
x_0=x, g_yx_0=y$, then
$$
R(x,y)=R(g_xx_0,g_yx_0)=R(g_y^{-1}g_xx_0,x_0)=\phi(g_y^{-1}g_x)\ .
$$
For a function $\zeta$ on $G$ let
$$
\breve \zeta(g):=\overline{\zeta(g^{-1})}\ .
$$
Recall that every positive definite function $\phi$ on $G$ satisfies (see \cite{sugiura} p.123 e.g.)
\begin{equation}\label{phi-phibreve}
\breve\phi(g)=\phi(g)\ .
\end{equation}
\begin{remark}\rm\label{aggiunto}
If $\zeta\in L^2(G)$, then for every $\sigma \in \widehat G$ we have
$\widehat {\breve \zeta} (\sigma)= \widehat \zeta(\sigma)^*$.
Actually,
$$
\dlines{
\widehat {\breve \zeta} (\sigma)
=\sqrt{\dim\sigma} \int_G \overline{\zeta(g^{-1})} D^\sigma(g^{-1})\,dg=\sqrt{\dim\sigma}\int_G
\overline{\zeta(g)} D^\sigma(g)\,dg=\cr
=
\sqrt{\dim\sigma}\int_G \bigl(\zeta(g)
D^\sigma(g^{-1})\bigr)^*\,dg=\widehat \zeta(\sigma)^*\ .\cr }
$$
\end{remark}
\qed

The following proposition states some (not really unexpected)
properties of positive definite functions that we
shall need later.
\begin{prop}\label{structure of positive definite}
Let $\phi$ be a continuous positive definite function and $\sigma\in \widehat G$.

a) The operator coefficient $\widehat\phi(\sigma):H_\sigma\to H_\sigma$  as defined in (\ref{Fourier coefficient}) is Hermitian positive definite.

b) Let $ \phi^\sigma:G\to \C$ be the $\sigma$ component of $\phi$ defined in (\ref{component}). Then $\phi^\sigma$ is also positive definite.
\end{prop}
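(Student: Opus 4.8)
For part (a), the plan is to read off the Hermitian property and positivity of $\widehat\phi(\sigma)$ directly from the defining integral and the positivity condition \paref{dis1}. First I would establish that $\widehat\phi(\sigma)$ is Hermitian: by \paref{phi-phibreve} we have $\breve\phi=\phi$, and Remark 3.6 gives $\widehat{\breve\phi}(\sigma)=\widehat\phi(\sigma)^*$, so that $\widehat\phi(\sigma)=\widehat\phi(\sigma)^*$ immediately. (Strictly, Remark 3.6 is stated for $\zeta\in L^2(G)$, but a continuous function on the compact group $G$ is certainly in $L^2(G)$, so this applies.) For positivity, I would test the operator against an arbitrary vector $v\in H_\sigma$ and try to rewrite $\langle\widehat\phi(\sigma)v,v\rangle$ as a double integral of the form appearing in \paref{dis1}. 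Writing
$$
\langle\widehat\phi(\sigma)v,v\rangle=\sqrt{\dim\sigma}\int_G\phi(g)\,\langle D^\sigma(g^{-1})v,v\rangle\,dg,
$$
the natural move is to choose the test function $f(g):=\langle D^\sigma(g)v,v\rangle=\overline{\langle v,D^\sigma(g)v\rangle}$ and verify that the double integral $\int_G\int_G\phi(h^{-1}g)f(h)\overline{f(g)}\,dg\,dh$ collapses, via the homomorphism property $D^\sigma(h^{-1}g)=D^\sigma(h)^{-1}D^\sigma(g)=D^\sigma(h)^*D^\sigma(g)$ and unitarity, to a positive multiple of $\langle\widehat\phi(\sigma)v,v\rangle$. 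This is the one place requiring genuine care, so I discuss it below.

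The key computation for (a) is to expand $\phi(h^{-1}g)$ using the integral representation that inverts \paref{Fourier coefficient}, or more directly to substitute $f(g)=\langle D^\sigma(g)v,v\rangle$ into \paref{dis1} and simplify. With this choice one gets
$$
\int_G\int_G \phi(h^{-1}g)\,\langle D^\sigma(h)v,v\rangle\,\overline{\langle D^\sigma(g)v,v\rangle}\,dg\,dh\ge 0,
$$
and after the change of variable $g\mapsto hg$ (using bi-invariance of Haar measure) together with the homomorphism and unitarity properties of $D^\sigma$, the inner integral over $h$ factors out as an average of $\langle D^\sigma(h)v,\cdot\rangle$ against its conjugate, producing a rank-one projection times $\langle\widehat\phi(\sigma)v,v\rangle$. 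I expect a clean constant (involving $\dim\sigma$ and $\|v\|^2$) to emerge, giving $\langle\widehat\phi(\sigma)v,v\rangle\ge 0$. The main obstacle will be organizing the two integrations so that the Schur orthogonality relations for the matrix coefficients $D^\sigma_{ij}$ can be invoked to carry out the $h$-integration explicitly; keeping track of which index is summed and not conflating left- versus right-translation is where an error is most likely to slip in.

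For part (b), the plan is to deduce positive definiteness of the component $\phi^\sigma$ from part (a). By \paref{component} we have $\phi^\sigma(g)=\sqrt{\dim\sigma}\,\tr(\widehat\phi(\sigma)D^\sigma(g))$, and since $\widehat\phi(\sigma)$ is Hermitian positive definite by (a), I would write $\widehat\phi(\sigma)=A^*A$ for some operator $A$ on $H_\sigma$ (its positive square root, or any such factorization). Then for $g_1,\dots,g_m\in G$ and $\xi_1,\dots,\xi_m\in\C$,
$$
\sum_{i,j}\phi^\sigma(g_i^{-1}g_j)\,\overline{\xi_i}\,\xi_j
=\sqrt{\dim\sigma}\sum_{i,j}\overline{\xi_i}\,\xi_j\,\tr\bigl(A^*A\,D^\sigma(g_i^{-1})D^\sigma(g_j)\bigr),
$$
and using $D^\sigma(g_i^{-1})=D^\sigma(g_i)^*$ together with cyclicity of the trace, the right-hand side should rearrange into $\sqrt{\dim\sigma}\,\tr\bigl((\sum_j\xi_j A D^\sigma(g_j))(\sum_i\xi_i A D^\sigma(g_i))^*\bigr)$, which is the trace of a positive semidefinite operator and hence $\ge 0$. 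This reduces (b) to linear algebra once (a) is in hand, so I anticipate no real difficulty there beyond bookkeeping with adjoints and the cyclic trace identity.
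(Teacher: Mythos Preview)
Your plan is correct, but it departs from the paper's proof in two places worth noting.

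For (a), both arguments ultimately rewrite $\langle\widehat\phi(\sigma)v,v\rangle$ as a double integral to which \paref{dis1} applies, but the mechanisms differ. You plug the single matrix coefficient $f(g)=\langle D^\sigma(g)v,v\rangle$ into \paref{dis1}, change variables, and invoke Schur orthogonality to collapse the $h$-integral; this does work, producing the constant $\|v\|^2/\dim\sigma$ you anticipate. The paper instead notes directly, by translation invariance of Haar measure, that
\[
\int_G\phi(g)\,\langle D^\sigma(g^{-1})v,v\rangle\,dg=\int_G\int_G\phi(h^{-1}g)\,\langle D^\sigma(g^{-1}h)v,v\rangle\,dg\,dh,
\]
then writes $\langle D^\sigma(g^{-1}h)v,v\rangle=\langle D^\sigma(h)v,D^\sigma(g)v\rangle=\sum_k f_k(h)\overline{f_k(g)}$ with $f_k(g)=(D^\sigma(g)v)_k$, and applies \paref{dis1} to each $f_k$. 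This sidesteps Schur orthogonality entirely and is a bit shorter; your route works too but carries more bookkeeping.

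For (b), the approaches are genuinely different. You deduce positive definiteness of $\phi^\sigma$ algebraically from (a): factoring $\widehat\phi(\sigma)=A^*A$ and using cyclicity of the trace to recognise $\sum_{i,j}\phi^\sigma(g_i^{-1}g_j)\overline{\xi_i}\xi_j$ as $\sqrt{\dim\sigma}\,\tr(PM^*M)=\sqrt{\dim\sigma}\,\tr(MPM^*)\ge0$, where $P=\widehat\phi(\sigma)$ and $M=\sum_j\xi_jD^\sigma(g_j)$. (Your displayed rearrangement has the factors slightly out of order, but this is exactly the ``bookkeeping with adjoints'' you flagged, and the conclusion survives.) The paper instead argues Fourier-analytically and does not use (a) at all: for $f\in L^2_\sigma(G)$ one has $f*\phi^{\sigma'}=0$ unless $\sigma'=\sigma$, so the double integral \paref{dis1} with $\phi^\sigma$ equals the same integral with the full $\phi$, hence is $\ge0$; the case of general $f\in L^2(G)$ then follows by projecting onto the $\sigma$-isotypical component. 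Your argument is more self-contained and purely linear-algebraic once (a) is in hand; the paper's argument makes transparent that only the $\sigma$-component of the test function matters and ties positivity of $\phi^\sigma$ directly back to positivity of $\phi$.
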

\begin{proof}
a) For a fixed a basis $v_1,\dots,v_{\dim\sigma}$ of $H_\sigma$, we have by the invariance of the Haar measure
$$
\dlines{
\langle \widehat \phi(\sigma) v,v \rangle =\sqrt{\dim\sigma}\int_G \phi(g)
\langle D^\sigma(g^{-1})v,v\rangle\,dg = \sqrt{\dim\sigma}\int_G \int_G \phi(h^{-1}g)
\langle D^\sigma(g^{-1}h)v,v\rangle\,dg\,dh=\cr =
\sqrt{\dim\sigma}\int_G \int_G \phi(h^{-1}g) \langle D^\sigma(h)v,D^\sigma(g)v\rangle\,dg\,dh=\cr
=\sqrt{\dim\sigma}\int_G \int_G \phi(h^{-1}g) \sum_k ({D^\sigma(h)}{v})_k\overline{({D^\sigma(g)} {v})}_k\,dg\,dh =\cr
=\sqrt{\dim\sigma}\sum_k \int_G \int_G \phi(h^{-1}g) f_k(h)\overline{f_k(g)}\,dg\,dh \ge 0\ ,\cr
}
$$
where, for every $k$, $f_k(g):=({D^\sigma(g)}{v})_k$ and we can conclude thanks to (\ref{dis1}).

b) By the
Peter-Weyl theorem \paref{PWdiverso}
\begin{equation}
\phi= \sum_{\sigma\in \widehat G} \phi^\sigma
\end{equation}
in $L^2(G)$. Let $f\in L^2_\sigma(G)$ as in (\ref{dis1}) and recall
that $f$ is a continuous function. We have
$$
\dlines{ \int_G \int_G \phi(h^{-1}g) f(h)
\overline{f(g)}\,dg\,dh=\sum_{\sigma'\in\widehat G}\int_G \int_G
\phi^{\sigma'}(h^{-1}g) f(h) \overline{f(g)}\,dg\,dh=\cr =\int_G
\sum_{\sigma'\in\widehat G} \underbrace{\int_G
\phi^{\sigma'}(h^{-1}g) f(h)\,dh}_{= f\ast \phi^{\sigma'}(g)}
\overline{f(g)}\,dg= \int_G \int_G \phi^{\sigma}(h^{-1}g)
f(h)\overline{f(g)}\,dg\,dh\cr }
$$
as $f\ast \phi^{\sigma'} \ne 0$ if and
only if $\sigma'=\sigma$. Therefore for every $\sigma\in \widehat G$ and $f\in L^2_\sigma(G)$
\begin{equation}
\int_G \int_G \phi^{\sigma}(h^{-1}g) f(h) \overline{f(g)}\,dg\,dh=\int_G \int_G \phi(h^{-1}g) f(h) \overline{f(g)}\,dg\,dh \ge 0\ .
\end{equation}
Let now $f\in L^2(G)$ and $f=\sum_{\sigma'} f^{\sigma'}$ be its Fourier series. The same argument as above gives
$$
\int_G \int_G \phi^{\sigma}(h^{-1}g) f(h) \overline{f(g)}\,dg\,dh=\int_G \int_G \phi^{\sigma}(h^{-1}g) f^\sigma(h) \overline{f^\sigma(g)}\,dg\,dh\ge 0
$$
so that $\phi^\sigma$ is a positive definite function.
\end{proof}
Another important property enjoyed by positive definite functions on $G$ is shown
in the following classical theorem (see \cite{bib:G}, Theorem 3.20 p.151).
\begin{theorem}\label{gangolli}
Let $\zeta$ be a continuous positive definite function on $G$ and denote $\zeta^\sigma$
the component of $\zeta$ on the $\sigma$-isotypical subspace $L^2_\sigma(G)$. Then
\begin{equation}\label{gangolli-true}
\sum_{\sigma\in\widehat G}\sqrt{\dim \sigma}\,\tr \widehat\zeta(\sigma)<+\infty
\end{equation}
and the Fourier series
$$
\zeta=\sum_{\sigma\in \widehat G} \zeta^\sigma
$$
converges uniformly on $G$.
\end{theorem}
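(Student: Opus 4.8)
The plan is to prove the two assertions in order: first the summability \paref{gangolli-true}, and then the uniform convergence of the Fourier series as a consequence, via the Weierstrass $M$-test.

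For the summability, I would begin by evaluating the component \paref{component} at the identity $e$, where $D^\sigma(e)$ is the identity operator: this gives $\zeta^\sigma(e)=\sqrt{\dim\sigma}\,\tr\widehat\zeta(\sigma)$, and by Proposition \ref{structure of positive definite}a each of these numbers is $\ge 0$. Hence \paref{gangolli-true} is a series of nonnegative terms, and it suffices to bound its partial sums. Fix a finite set $F\subset\widehat G$ and put $\zeta_F:=\sum_{\sigma\in F}\zeta^\sigma$, a continuous function (a finite combination of matrix coefficients). The crucial claim is that the tail $\zeta-\zeta_F$ is again positive definite; granting this, since $\zeta-\zeta_F$ is continuous its value at $e$ is $\ge 0$, so that $\zeta_F(e)=\sum_{\sigma\in F}\sqrt{\dim\sigma}\,\tr\widehat\zeta(\sigma)\le\zeta(e)$. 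As $F$ is arbitrary and all terms are nonnegative, \paref{gangolli-true} follows, with sum bounded by $\zeta(e)$.

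To prove that $\zeta-\zeta_F$ is positive definite I would argue exactly as in the proof of Proposition \ref{structure of positive definite}b. Given a continuous $f$ with Fourier decomposition $f=\sum_{\sigma'}f^{\sigma'}$, the cross terms vanish: by the convolution rule \paref{conv1} the function $f^{\sigma'}\ast\zeta$ lies in $L^2_{\sigma'}(G)$ and is therefore orthogonal to $f^{\sigma''}$ for $\sigma''\ne\sigma'$, so that
$$\int_G\int_G \zeta(h^{-1}g)f(h)\overline{f(g)}\,dg\,dh=\sum_{\sigma'}\int_G\int_G \zeta^{\sigma'}(h^{-1}g)f^{\sigma'}(h)\overline{f^{\sigma'}(g)}\,dg\,dh,$$
each summand being $\ge 0$ by Proposition \ref{structure of positive definite}b. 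Subtracting the finitely many terms with $\sigma'\in F$ leaves a sum over $\sigma'\notin F$ of nonnegative quantities, i.e. \paref{dis1} holds for $\zeta-\zeta_F$; by the equivalence of \paref{dis1} and \paref{pd-for-phi} this is the desired positive definiteness, giving in particular $(\zeta-\zeta_F)(e)\ge 0$. This step, the vanishing of the cross terms and the resulting positivity of the tail, is the heart of the argument and the main obstacle; everything else is estimation.

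Finally, for the uniform convergence I would estimate $\zeta^\sigma$ uniformly in $g$. Since $\widehat\zeta(\sigma)$ is Hermitian positive semidefinite by Proposition \ref{structure of positive definite}a and $D^\sigma(g)$ is unitary, the Cauchy--Schwarz inequality in the Hilbert--Schmidt inner product yields $|\tr(\widehat\zeta(\sigma)D^\sigma(g))|\le\tr\widehat\zeta(\sigma)$ for every $g$ (writing $\widehat\zeta(\sigma)=\widehat\zeta(\sigma)^{1/2}\widehat\zeta(\sigma)^{1/2}$ and using that $D^\sigma(g)$ preserves the Hilbert--Schmidt norm), whence
$$\sup_{g\in G}|\zeta^\sigma(g)|\le\sqrt{\dim\sigma}\,\tr\widehat\zeta(\sigma).$$
Summing over $\sigma$ and invoking the already established \paref{gangolli-true}, the series $\sum_\sigma\zeta^\sigma$ is dominated by a convergent series of constants, so by the Weierstrass $M$-test it converges uniformly, and absolutely, on $G$ to a continuous function. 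This limit coincides with the $L^2$-limit furnished by Peter--Weyl, namely $\zeta$ itself; since both are continuous they agree everywhere, which completes the proof.
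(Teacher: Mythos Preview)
Your argument is correct. Note, however, that the paper does not supply its own proof of this theorem: it is stated as a classical result and referred to \cite{bib:G}, Theorem~3.20, so there is no in-paper proof to compare against. Your route---evaluating $\zeta^\sigma$ at the identity to get $\zeta^\sigma(e)=\sqrt{\dim\sigma}\,\tr\widehat\zeta(\sigma)\ge 0$, showing via the isotypical orthogonality (exactly as in the proof of Proposition~\ref{structure of positive definite}b) that the tail $\zeta-\zeta_F$ is again continuous positive definite so that $(\zeta-\zeta_F)(e)\ge 0$ bounds the partial sums by $\zeta(e)$, and then deducing uniform convergence from the estimate $\sup_g|\zeta^\sigma(g)|\le\sqrt{\dim\sigma}\,\tr\widehat\zeta(\sigma)$ and the Weierstrass $M$-test---is the standard proof of this fact and goes through cleanly with the tools already in place in the paper (Proposition~\ref{structure of positive definite} and the equivalence of \paref{pd-for-phi} and \paref{dis1}).
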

We shall need the following ``square root'' theorem in the proof of the representation formula of Gaussian isotropic random fields on $\X$.
\begin{theorem}\label{square-root} Let $\phi$ be a bi-$K$-invariant positive
definite continuous function on $G$. Then there exists a  bi-$K$-invariant function $f\in L^2(G)$ such that $\phi=f*\breve
f$. Moreover, if $\phi$ is real valued then $f$ also can be chosen
to be real valued.
\end{theorem}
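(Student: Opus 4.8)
The plan is to build $f$ on the Fourier side and then recover it via the Peter--Weyl expansion \paref{PW for compact groups}. Taking Fourier coefficients in the target identity $\phi=f*\breve f$ and using the convolution rule \paref{conv1} together with Remark \ref{aggiunto} (which gives $\widehat{\breve f}(\sigma)=\widehat f(\sigma)^*$), the equation $\phi=f*\breve f$ is equivalent to
\[
\widehat\phi(\sigma)=\frac1{\sqrt{\dim\sigma}}\,\widehat f(\sigma)^*\,\widehat f(\sigma),\qquad \sigma\in\widehat G\ .
\]
By Proposition \ref{structure of positive definite} a) each $\widehat\phi(\sigma)$ is Hermitian and positive, hence admits a unique positive Hermitian square root $\widehat\phi(\sigma)^{1/2}$. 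I would therefore \emph{define}
\[
\widehat f(\sigma):=(\dim\sigma)^{1/4}\,\widehat\phi(\sigma)^{1/2}\ .
\]
Since this operator is Hermitian, $\widehat f(\sigma)^*\widehat f(\sigma)=(\dim\sigma)^{1/2}\widehat\phi(\sigma)$, so the displayed equation holds by construction.

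It remains to check that these coefficients actually come from a function in $L^2(G)$. The Plancherel identity underlying \paref{PW for compact groups} reads $\|f\|_{L^2(G)}^2=\sum_{\sigma\in\widehat G}\tr\bigl(\widehat f(\sigma)^*\widehat f(\sigma)\bigr)$, and with the above choice
\[
\tr\bigl(\widehat f(\sigma)^*\widehat f(\sigma)\bigr)=\sqrt{\dim\sigma}\,\tr\widehat\phi(\sigma)\ .
\]
The resulting series $\sum_\sigma\sqrt{\dim\sigma}\,\tr\widehat\phi(\sigma)$ is precisely the quantity shown to be finite in Gangolli's Theorem \ref{gangolli}, see \paref{gangolli-true}. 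Hence $f$, defined as the Peter--Weyl series \paref{PW for compact groups} with these coefficients, belongs to $L^2(G)$; by uniqueness of the Fourier coefficients $\phi=f*\breve f$.

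For bi-$K$-invariance I would invoke the characterization \paref{fourier-bicappa}: $f$ is bi-$K$-invariant iff $\widehat f(\sigma)=P_{\sigma,0}\widehat f(\sigma)P_{\sigma,0}$ for all $\sigma$. Since $\phi$ is bi-$K$-invariant, $\widehat\phi(\sigma)=P_{\sigma,0}\widehat\phi(\sigma)P_{\sigma,0}$; for a Hermitian operator this identity forces $\widehat\phi(\sigma)$ to annihilate $H_{\sigma,0}^{\perp}$ and to commute with the projection $P_{\sigma,0}$. By the functional calculus the positive square root inherits this, $\widehat\phi(\sigma)^{1/2}=P_{\sigma,0}\widehat\phi(\sigma)^{1/2}P_{\sigma,0}$, so $\widehat f(\sigma)$ satisfies \paref{fourier-bicappa} and $f$ is bi-$K$-invariant.

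The real-valued case is the step I expect to be the main obstacle, since the condition ``$f$ real'' is not transparent in the Fourier variables: it amounts to a symmetry relating $\widehat f(\sigma)$ and $\widehat f(\bar\sigma)$ through the intertwiner realising $\overline{D^\sigma}\cong D^{\bar\sigma}$. The strategy is to show that $\phi$ being real forces the corresponding symmetry on $\widehat\phi(\sigma)$, and then to observe that this symmetry is preserved by passing to the positive square root: both complex conjugation and conjugation by a fixed unitary send a positive Hermitian operator to a positive Hermitian operator and, by uniqueness of the positive square root, commute with $A\mapsto A^{1/2}$. Transporting the symmetry from $\widehat\phi$ to $\widehat f$ then shows the reconstructed $f$ is real. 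The delicate point is purely the bookkeeping with the conjugate-representation intertwiners; positivity and uniqueness of the square root are exactly what make the conclusion survive the square-root operation.
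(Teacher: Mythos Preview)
Your construction of $f$ via $\widehat f(\sigma)=(\dim\sigma)^{1/4}\widehat\phi(\sigma)^{1/2}$, the $L^2$ bound from Theorem~\ref{gangolli}, and the bi-$K$-invariance via \paref{fourier-bicappa} are exactly the paper's argument (indeed your normalization with the factor $(\dim\sigma)^{1/4}$ is slightly more careful than the paper's own write-up).

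The real-valued statement is where you and the paper diverge. The paper does not argue on the Fourier side via uniqueness of the positive square root; instead it defers to Proposition~\ref{real-sq} in the Appendix, which carries out a Frobenius--Schur case analysis. For $\sigma$ of real type one picks a basis making $D^\sigma$ real-matrix-valued, so $\widehat\phi(\sigma)$ is a real symmetric positive matrix and a real square root exists; for $\sigma$ of complex type one pairs $\sigma$ with its (inequivalent) conjugate $\sigma^*$ and chooses $\widehat f(\sigma^*)=\overline{\widehat f(\sigma)}$; for $\sigma$ of quaternionic type one exhibits the block structure \paref{eq blocchi-gen} induced by the antilinear $J$ and checks by diagonalizing in a $J$-adapted basis that the Hermitian square root has the same block form. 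Your proposed route---observe that ``$\phi$ real'' is the relation $\overline{\widehat\phi(\sigma)}=C_\sigma\widehat\phi(\sigma')C_\sigma^{-1}$ for the intertwiner $C_\sigma$ realizing $\overline{D^\sigma}\cong D^{\sigma'}$, and then note that complex conjugation and unitary conjugation both preserve positive-Hermitianity, so by uniqueness of the positive root the same relation holds with $\widehat\phi$ replaced by $\widehat\phi^{1/2}$---is correct and handles all three types uniformly; the self-conjugate case ($\sigma'=\sigma$) and the paired case ($\sigma'\neq\sigma$) are treated by the same sentence. What the paper's explicit case split buys is concreteness (one sees the real basis, the block form, etc.), and it does not rely on having chosen specifically the \emph{positive} square root; your argument is shorter but leans essentially on that uniqueness.
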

\begin{proof}
For every $\sigma\in\widehat G$, $\widehat \phi(\sigma)$ is
Hermitian positive definite. Therefore there exist matrices
$\Lambda(\sigma)$ such that
$\Lambda(\sigma)\Lambda(\sigma)^*=\widehat\phi(\sigma)$.
Let
$$
f= \sum_{\sigma\in\widehat G} \underbrace{\sqrt{\dim\sigma}\,
\tr\bigl(\Lambda(\sigma) D^\sigma\bigr)}_{=f^\sigma}\ .
$$
This actually defines a function $f\in L^2(G)$ as it is easy to see that
$$
\Vert f^\sigma\Vert^2_2=\sum_{i,j=1}^{\dim\sigma}(\Lambda(\sigma)_{ij})^2=
\tr(\Lambda(\sigma)\Lambda(\sigma)^*)=\tr(\widehat\phi(\sigma))
$$
so that
$$
\Vert f\Vert^2_2=\sum_{\sigma\in\widehat G}\Vert f^\sigma\Vert^2_2=
\sum_{\sigma\in\widehat G}\tr(\widehat\phi(\sigma))<+\infty
$$
thanks to \paref{gangolli-true}.
By Remark \ref{aggiunto} and (\ref{conv1}), we have
$$
\phi= f\ast \breve f\ .
$$
Finally the matrix $\Lambda(\sigma)$ can be chosen to be Hermitian
and with this choice $f$ is bi-$K $-invariant
 as the relation \paref{fourier-bicappa}
$\widehat f (\sigma)=P_{\sigma,0}\widehat f(\sigma)P_{\sigma,0}$
still holds.

The last statement follows from Proposition \ref{real-sq} in the Appendix.

\end{proof}
Note that the decomposition of Theorem \ref{square-root} is not
unique, as the Hermitian square root of the positive definite
operator $\widehat\phi(\sigma)$ is not unique itself (see Remark \ref{rem-sfera}
below for explicit examples).
\section{Construction of Gaussian isotropic random fields}\label{sec4}

In this section we develop a method of constructing isotropic
Gaussian random fields on $\X$. It is pretty much inspired by P.
L\'evy's white noise construction of his spherical Brownian motion
(\cite{bib:L}).
Let $(X_n)_n$ be a sequence of i.i.d. $N(0,1)$-distributed r.v.'s on some probability space $(\Omega, \F, \P)$ and  denote by $\mathscr{H}\subset L^2(\P)$ the real Hilbert space generated by $(X_n)_n$.
Let $(e_n)_n$ be an orthonormal basis of $L^2_{\mathbb{R}}(\mathscr{X})$.
We define an isometry $S:L^2_{\mathbb{R}}(\mathscr{X})\to\mathscr{H}$ by
$$
L^2_{\mathbb{R}} (\mathscr{X})\ni \sum_k \alpha_k
e_k\enspace\leftrightarrow\enspace \sum_k \alpha_k X_k \in
\mathscr{H}\ .
$$
It is easy to extend $S$ to an isometry on  $L^2(\mathscr{X})$,
indeed if $f\in L^2(\mathscr{X})$, then $f=f_1+if_2$, with $f_1, f_2
\in L^2_{\mathbb{R}}(\mathscr{X})$, hence just set
$S(f)=S(f_1)+iS(f_2)$. Such an isometry respects the real character
of the function $f\in L^2(\X)$ (i.e. if $f$ is real then $S(f)$ is a
real r.v.).

Let $f$ be a left $K $-invariant function in $L^2(\mathscr{X})$.
We then define a random field $(T^f_x)_{x\in \mathscr{X}}$
associated to $f$ as follows: set $T^f_{x_0}=S(f)$ and, for every $x\in \X$,
\begin{equation}\label{def campo}
T^f_x =S(L_gf)\ ,
\end{equation}
where $g\in G$ is such that $gx_0=x$ ($L$ still denotes the left
regular action of $G$).
This is a good definition: in fact if also $\widetilde{g}\in G$ is such that
$\widetilde{g}x_0=x$, then $\widetilde{g}=gk$ for some $k\in K$ and therefore $L_{\widetilde g}f(x)=f(k^{-1}g^{-1}x)=
f(g^{-1}x)=L_gf(x)$ so that
$$
S(L_{\widetilde{g}}f)=S(L_gf)\ .
$$
The random field $T^f$ is mean square integrable, i.e.
$$
\dlines{ \E \Bigl[\int_\X |T^f_x|^2\,dx \Bigr]
< +\infty\ .}
$$
Actually,
if $g_x$ is any element of $G$ such that $g_xx_0=x$ (chosen in some
measurable way), then, as $\E[|T^f_x|^2]=\E [|S(L_{g_x} f)|]^2=\|
L_{g_x}f \|^2_{L^2(\X)}=\| f \|^2_{L^2(\X)}$, we have $\E \int_\X |T^f_x|^2\,dx= \| f \|^2_{L^2(\X)}$.
$T^f$ is a centered and \emph{complex-valued Gaussian} random field.
Let us now check that $T^f$ is isotropic. Recall that
the law of a complex-valued Gaussian random vector $Z=(Z_1, Z_2)$ is
completely characterized by its mean value $\E[Z]$, its covariance
matrix $\E[ \left(Z - \E[Z]\right) \left( Z- \E[Z] \right)^*]$ and
the \emph{pseudocovariance} or \emph{relation matrix} $\E[ \left(Z -
\E[Z]\right) \left( Z- \E[Z] \right)']$. We have

(i) as $S$ is an isometry
$$
\displaylines{
\mathbb{E}[T^f_{gx}\overline{T^f_{gy}}]=\mathbb{E}[S(L_{gg_x}f)\overline{S(L_{gg_y}f)}]=
\langle L_{gg_x }f, L_{gg_y}f\rangle_{L^2(\X)}
=\langle L_{g_x }f, L_{g_y}f \rangle_{L^2(\X)}=
\mathbb{E}[T^f_x\overline{T^f_y}]\ .}
$$

(ii) Moreover, as complex conjugation commutes both with $S$ and the
left regular representation of $G$,
$$
\displaylines{
\mathbb{E}[T^f_{gx}T^f_{gy}]=\mathbb{E}[S(L_{gg_x}f)\overline{S(L_{gg_y}\overline{f})}]=
\langle L_{gg_x }f, L_{gg_y}\overline{f}\rangle_{L^2(\X)}=
\langle L_{g_x }f, L_{g_y}\overline{f}\rangle_{L^2(\X)}=\mathbb{E}[T^f_xT^f_y]\ .}
$$
Therefore $T^f$ is isotropic because it has the same covariance and
relation kernels as the rotated field $(T^f)^g$ for every $g\in G$.

If $R^f(x,y)=\E[T^f_x \overline{T^f_y}]$ denotes its covariance
kernel, then the associated positive definite function
$\phi^f(g):=R(gx_0,x_0)$ satisfies
\begin{equation}\label{convolution for phi}
\begin{array}{c}
\displaystyle\phi^f(g)=\E[S(L_g f)\overline{S(f)}]=
\langle L_gf, f\rangle
=\\
\noalign{\vskip3pt}
\displaystyle= \int_G \widetilde f(g^{-1}h) \overline{\widetilde f(h)}\,dh= \int_G \widetilde f(g^{-1}h) \breve {\widetilde f} (h^{-1})\,dh=\widetilde f \ast \breve {\widetilde f} (g^{-1})\ , \\
\end{array}
\end{equation}
where $\widetilde f$ is the pullback on $G$ of $f$ and the convolution $\ast$ is in $G$. Moreover the relation function of $T^f$
$
\zeta^f(g) := \E[T^f_{gx_0} T^f_{x_0}]
$
satisfies
\begin{equation}\label{convolution for zeta}
\zeta^f(g)=\E[S(L_gf)S(f)]=\langle L_gf, \overline{f}\rangle\ .
\end{equation}
One may ask whether every a.s. square integrable, isotropic, complex-valued Gaussian centered random field on $\X$ can be obtained with this construction: the answer
is \emph{no} in general. It is however positive if we consider
{\it real} isotropic Gaussian random fields (see Theorem \ref{real-general} below). Before considering the case of a general homogeneous space $\X$, let us look first at the case of the sphere, where things are particularly simple.

\begin{remark}\label{rem-sfera} \rm (Representation of
real Gaussian isotropic random fields on $\cS^2$) If $\X=\cS^2$
under the action of $SO(3)$, every isotropic, \emph{real} Gaussian
and centered random field is of the form \paref{def campo} for some
left-$K$-invariant function $f:\cS^2\to \R$. Indeed let us consider on $L^2(\cS^2)$ the Fourier
basis $Y_{\ell,m}$, $\ell=1,2,\dots$, $m=-\ell,\dots,\ell$,
given by the spherical harmonics (\ref{armoniche sferiche1}).
Every continuous positive definite left-$K$-invariant function $\phi$ on $\cS^2$ has a Fourier expansion of the form (\ref{PW invariant sphere})
\begin{equation}\label{fi per sfera}
\phi = \sum_{\ell \ge 0} \alpha_\ell Y_{\ell,0}\ ,
\end{equation}
where (Proposition
\ref{structure of positive definite}) $\alpha_\ell \ge 0$ and
$$
\sum_{\ell \ge 0} \sqrt{2\ell+1}\,\alpha_\ell<+\infty
$$
(Theorem \ref{gangolli}).
The $Y_{\ell,0}$'s being real, the function $\phi$ in (\ref{fi per sfera})
is\emph{ real}, so that, by \paref{phi-phibreve}, $\phi(g)=\phi(g^{-1})$
(in this remark and in the next example we identify functions on $\cS^2$ with
their pullbacks on $SO(3)$ for simplicity of notations).

If $\phi$ is the positive definite left-$K$-invariant function associated to
$T$, then, keeping in mind that $Y_{\ell,0}*Y_{\ell',0}=(2\ell+1)^{-1/2}
Y_{\ell,0}\delta_{\ell,\ell'}$, a  square root $f$ of $\phi$ is given by
\begin{equation}
f = \sum_{\ell \ge 0} \beta_\ell \, Y_{\ell,0}\ ,
\end{equation}
where $\beta_\ell$ is a complex number such that
$$
\frac{|\beta_\ell |^2}{\sqrt{2\ell+1}}= \sqrt{\alpha_\ell}\ .
$$
Therefore there exist infinitely many real
functions $f$ such that $\phi(g)=\phi(g^{-1})=
f \ast \breve{f}(g)$, corresponding to the choices $\beta_\ell=\pm
( (2\ell+1)\alpha_\ell )^{1/4}$. For each of these, the random field $T^f$ has
the same distribution as $T$, being real and having the same associated positive
definite function.
\end{remark}
\qed

\begin{example}\rm{(P.L\'evy's spherical Brownian field)}\label{MB}.
Let us choose as a particular instance of the previous construction
$f=c1_{H}$, where $H$ is the half-sphere centered at the north pole
$x_0$ of $\cS^2$ and $c$ is some constant to be chosen later.

Still denoting by $S$ a white noise on $\cS^2$, from (\ref{def campo})
we have
\begin{equation}
T^f_x = c S(1_{H_x})\ ,
\end{equation}
where $1_{H_x}$ is the half-sphere centered at $x\in \cS^2$. Now,
let $x, y\in \bS^2$ and denote by $d(x,y) = \theta$ their distance, then,
$S$ being an isometry,
\begin{equation}
\Var(T^f_x - T^f_y) = c^2 \| 1_{H_x\vartriangle H_y}\|^2\ .
\end{equation}
The symmetric difference $H_x\vartriangle H_y$ is formed by the union of
two wedges whose total surface is equal to $\frac{\theta}{\pi}$
(recall that we
consider the surface of $\cS^2$ normalized with total mass $=1$).
Therefore, choosing $c= \sqrt{\pi}$, we have
\begin{equation}
\Var(T^f_x - T^f_y) = d(x,y)
\end{equation}
and furthermore $\Var(T^f_x) = \tfrac\pi2$.
Thus
\begin{equation}\label{aa}
\Cov(T^f_x, T^f_y) = \tfrac12 \bigl( \Var(T^f_x) + \Var(T^f_y) -
\Var(T^f_x - T^f_y)\bigr) = \tfrac\pi2 - \tfrac12 d(x,y)\ .
\end{equation}
Note that the positive definiteness of (\ref{aa}) implies that the distance $d$
is a Schoenberg restricted negative definite kernel on $\cS^2$. The random
field $W$
\begin{equation}
W_x := T^f_x - T^f_{x_0}
\end{equation}
is \emph{P.L\'evy's spherical Brownian field}, as its
covariance kernel is
\begin{equation}\label{kernel del mb}
\Cov(W_x, W_y) = \tfrac12 \left( d(x,x_0) + d(y,x_0) - d(x,y) \right)\ .
\end{equation}
In particular the kernel at the r.h.s. of (\ref{kernel del mb})
is positive definite (see also \cite{bib:G}).
Let us compute the expansion into spherical harmonics of the
positive definite function $\phi$ associated to the random field
$T^f$ and to $f$.  We have $\phi(x)=\frac\pi2-\frac
12\,d(x,x_0)$, i.e.  $\phi(x)=\frac\pi2-\frac12\, \vt$ in spherical coordinates, $\vt$ being the colatitude of $x$, whereas
$Y_{\ell,0}(x)=\sqrt{2\ell+1}\,P_\ell(\cos\vt)$ where $P_\ell$ is the $\ell$-th Legendre polynomial. This formula for the central spherical harmonics
differs slightly from the usual one, as we consider the total
measure of $\cS^2$ to be $=1$. Then, recalling the normalized
measure of the sphere is $\frac 1{4\pi}\,\sin\vt\, d\vt\, d\phi$ and
that $Y_{\ell,0}$ is orthogonal to the constants
$$
\dlines{ \int_{\cS^2}\phi(x)Y_{\ell,0}(x)\, dx=-\frac
14\,\sqrt{2\ell+1}\int_0^\pi\vt P_\ell(\cos\th)\sin \vt\, d\vt=
-\frac 14\,\sqrt{2\ell+1}\int_{-1}^{1}\arccos t\, P_\ell(t)\, dt=\cr
=\frac 14\,\sqrt{2\ell+1}\int_{-1}^{1}\arcsin t\, P_\ell(t)\,
dt=\frac14\,\sqrt{2\ell+1}\,c_\ell\ , \cr }
$$
where
$$
c_\ell=\pi\Bigl\{\frac{3\cdot 5\cdots(\ell-2)}{2\cdot 4\cdots
(\ell+1))}\Bigr\}^2\,\quad \ell=1,3,\dots
$$
and $c_\ell=0$ for $\ell$ even (see \cite{MR1424469}, p.~325). As for
the function $f=\sqrt{\pi}\,1_{H}$, we have
$$
\int_{\cS^2}f(x)Y_{\ell,0}(x)\, dx=\frac {\sqrt{\pi}}2
\,\sqrt{2\ell+1}\int_0^{\pi/2}P_\ell(\cos\vt)\sin\vt\, d\vt= \frac
{\sqrt{\pi}}2 \,\sqrt{2\ell+1}\int_0^1P_\ell(t)\, dt\ .
$$
The r.h.s. can be computed using Rodrigues formula for the
Legendre polynomials (see again \cite{MR1424469}, p.~297) giving
that it vanishes for $\ell$ even and equal to
\begin{equation}\label{2m}
(-1)^{m+1}\,\frac {\sqrt{\pi}}2 \,\sqrt{2\ell+1}\,\frac
{(2m)!{2m+1\choose m}}{2^{2m+1}(2m+1)!}
\end{equation}
for $\ell=2m+1$. Details of this computation are given in Remark
\ref{rod} in the Appendix. Simplifying the factorials the previous
expression becomes
$$
\dlines{ (-1)^m\,\frac {\sqrt{\pi}}2 \,\sqrt{2\ell+1}\,\frac
{(2m)!}{2^{2m+1}m!(m+1)!}=(-1)^m\,\frac {\sqrt{\pi}}2
\,\sqrt{2\ell+1}\,\frac{3\cdots (2m-1)}{2\cdots (2m+2)}
=(-1)^m\,\frac 12 \,\sqrt{2\ell+1}\,\sqrt{c_\ell}\ .\cr }
$$
Therefore the choice $f=\sqrt{\pi}\, 1_H$ corresponds to taking
alternating signs when taking the square roots. Note that the
choice $f'=\sum_\ell \beta_\ell Y_{\ell,0}$ with $\beta_\ell=\frac 12
\,\sqrt{2\ell+1}\,\sqrt{c_\ell}$ would have given a function
diverging at the north pole $x_0$. Actually it is elementary to
check that the series $\sum_\ell ( {2\ell+1})\,\sqrt{c_\ell}$
diverges so that $f'$ cannot be continuous by Theorem \ref{gangolli}.
\end{example}
\qed

The result of Remark \ref{rem-sfera} concerning $\cS^2$ can be extended to the case of a general homogeneous space.
\begin{theorem}\label{real-general} Let $\X$ be the homogeneous space of a compact group $G$ and let $T$ be an a.s. square
 integrable isotropic Gaussian \emph{real} random field on $\X$.
 Then there exists a left-$K$-invariant function $f\in L^2(\X)$ such that $T^f$ has the same distribution as $T$.
\end{theorem}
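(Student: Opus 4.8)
The plan is to reduce the whole statement to a matching of covariance kernels, exploiting the fact that for a \emph{real} centered Gaussian field the law is completely determined by its covariance (unlike the complex case, where the pseudo-covariance $\zeta^f$ also intervenes — which is exactly why the analogous assertion fails for general complex fields).

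First I would check that the hypotheses put us in the setting of \S3. Each $T_x$ is a one-dimensional marginal of a Gaussian field, hence a Gaussian random variable, hence automatically in $L^2(\P)$; so $T$ is second order, and we assume as throughout that $T$ is centered. By Proposition \ref{mean square continuous} the field is mean square continuous, so the covariance kernel $R(x,y)=\E[T_x\overline{T_y}]=\E[T_xT_y]$ is continuous and $G$-invariant, and it is real-valued because $T$ is real. Consequently the associated function $\phi(g):=R(gx_0,x_0)$ is continuous, bi-$K$-invariant and positive definite (all established in \S3) and, crucially, \emph{real-valued}.

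Next I would invoke the square-root Theorem \ref{square-root}. Since $\phi$ is a real-valued, continuous, bi-$K$-invariant positive definite function, that theorem yields a \emph{real-valued} bi-$K$-invariant $\widetilde f\in L^2(G)$ with $\phi=\widetilde f*\breve{\widetilde f}$. A bi-$K$-invariant function on $G$ is precisely the pullback of a left-$K$-invariant function $f\in L^2(\X)$ (right-$K$-invariance of $\widetilde f$ lets it descend to $\X$, and left-$K$-invariance of $\widetilde f$ is the $K$-invariance of $f$ on $\X$); moreover $f$ is real-valued. Thus $f$ is an admissible input for the construction of \S\ref{sec4}. Because $f$ is real and $S$ preserves the real character of functions, $T^f$ is itself a real, centered Gaussian field. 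Its associated positive definite function is, by \paref{convolution for phi}, $\phi^f(g)=\widetilde f*\breve{\widetilde f}(g^{-1})=\phi(g^{-1})$, and since $\phi$ is real we get $\phi(g^{-1})=\overline{\phi(g^{-1})}=\breve\phi(g)=\phi(g)$ by \paref{phi-phibreve}. Hence $\phi^f=\phi$, so $R^f=R$: the fields $T$ and $T^f$ are real centered Gaussian with identical covariance kernels, their finite-dimensional distributions therefore coincide, and $T^f$ has the same law as $T$.

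The argument is essentially an assembly of tools already in hand, so there is no single hard computation; the one genuine subtlety I would watch carefully is the propagation of \emph{real-valuedness} along the chain $T \text{ real}\Rightarrow \phi\text{ real}\Rightarrow f\text{ real}\Rightarrow T^f\text{ real}$. This is the decisive point: reality lets the square-root theorem return a real $f$, which makes $T^f$ real and thereby forces the relation kernel to coincide with the covariance kernel automatically, so that matching $\phi^f=\phi$ suffices. In the complex setting this link is severed — one would also have to match the pseudo-covariance $\zeta^f$ of \paref{convolution for zeta}, which cannot in general be done — and that is precisely where the theorem stops being true.
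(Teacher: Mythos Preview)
Your proof is correct and follows essentially the same route as the paper: reduce to matching $\phi^f=\phi$, use reality of $\phi$ together with \paref{phi-phibreve} to get $\phi(g)=\phi(g^{-1})$, and then invoke the real case of Theorem~\ref{square-root} to produce a real bi-$K$-invariant $\widetilde f$ with $\phi=\widetilde f*\breve{\widetilde f}$. Your write-up is simply more explicit than the paper's (spelling out second-orderness, the descent of $\widetilde f$ to $f\in L^2(\X)$, and the fact that for real centered Gaussian fields the covariance determines the law), but there is no substantive difference in approach.
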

\begin{proof} Let $\phi$ be the invariant positive definite function associated to $T$.
Thanks to \paref{convolution for phi} it is sufficient to prove that
there exists a \emph{real} $K$-invariant
function $f\in L^2(\X)$ such that $\phi(g)=\widetilde f \ast \breve{\widetilde
f}(g^{-1})$. Keeping in mind that $\phi(g)=\phi(g^{-1})$, as $\phi$
is real and thanks to \paref{phi-phibreve}, this follows from
Theorem \ref{square-root}.

\end{proof}

As remarked above $f$ is not unique.

Recall that a complex valued Gaussian r.v. $Z=X+iY$ is said to be
\emph{complex Gaussian} if the
r.v.'s $X,Y$ are jointly Gaussian, are independent and have the same variance. A $\C^m$-valued r.v.
$Z=(Z_1,\dots, Z_m)$ is said to be complex Gaussian if the r.v.
$\alpha_1Z_1+\dots+\alpha_mZ_m$ is complex Gaussian for every choice of $\alpha_1,\dots,\alpha_m\in \C$.
\begin{definition}An a.s. square integrable random field $T$ on $\X$ is said to be \emph{complex Gaussian}
if and only if the complex valued r.v.'s
$$
\int_\X T_xf(x)\, dx
$$
are complex Gaussian for every choice of $f\in L^2(\X)$.
\end{definition}
Complex Gaussian random fields will play an important role in the next sections. By
now let us remark
that, in general, it is not possible to obtain a complex Gaussian random field by the procedure
\paref{def campo}.

\begin{prop}\label{zeta-prop} Let $\zeta(x,y)=\E[T_xT_y]$ be the relation kernel of a centered complex
Gaussian random field $T$. Then $\zeta\equiv 0$.
\end{prop}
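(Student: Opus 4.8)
The plan is to reduce everything to the single relevant feature of complex Gaussian laws: a \emph{centered} complex Gaussian random variable has vanishing pseudo-variance. Indeed, if $Z=X+iY$ is centered complex Gaussian, then by definition $X,Y$ are independent, centered and share the same variance, so
\[
\E[Z^2]=\E[X^2]-\E[Y^2]+2i\,\E[XY]=0,
\]
the first two terms cancelling because the variances agree, and the last vanishing by independence and centering. The whole argument consists in propagating this pointwise fact about individual variables into a statement about the kernel $\zeta$.

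First I would introduce, for $f\in L^2(\X)$, the linear functional $Z(f):=\int_\X T_x f(x)\,dx$. Since $T$ is centered, $\E[Z(f)]=0$, and by the definition of a complex Gaussian random field $Z(f)$ is complex Gaussian; hence $\E[Z(f)^2]=0$ by the remark above. To reach the bilinear quantity $\E[Z(f)Z(g)]$ I would polarize at the level of random variables, exploiting the linearity $f\mapsto Z(f)$: because $Z(f+g)=Z(f)+Z(g)$ is again complex Gaussian, expanding
\[
0=\E\bigl[Z(f+g)^2\bigr]=\E[Z(f)^2]+2\,\E[Z(f)Z(g)]+\E[Z(g)^2]
\]
and using that each squared term vanishes yields $\E[Z(f)Z(g)]=0$ for all $f,g\in L^2(\X)$. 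An application of Fubini, legitimate under the second-order hypothesis implicit in the definition of $\zeta$ (via Cauchy--Schwarz), then gives
\[
\int_\X\int_\X \zeta(x,y)\,f(x)g(y)\,dx\,dy=\E[Z(f)Z(g)]=0\qquad\text{for all }f,g\in L^2(\X).
\]

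Finally I would conclude that $\zeta$ vanishes. Replacing $f,g$ by $\overline f,\overline g$ (still arbitrary in $L^2(\X)$), the last display expresses that $\zeta$ is orthogonal in $L^2(\X\times\X)$ to every product $f\otimes g$; since such products span a dense subspace of $L^2(\X\times\X)$, this forces $\zeta=0$ as an element of $L^2(\X\times\X)$, that is $\zeta(x,y)=0$ for a.e.\ $(x,y)$. When $T$ is moreover mean square continuous --- for instance in the isotropic second order setting of Proposition \ref{mean square continuous} --- the kernel $\zeta$ is continuous and the identity holds everywhere, giving $\zeta\equiv0$. The probabilistic core (vanishing of the pseudo-variance) is immediate; the only genuinely delicate points are the Fubini justification and the totality of tensor products invoked in this last step, both routine.
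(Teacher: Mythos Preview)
Your proof is correct and follows essentially the same route as the paper's: both hinge on the fact that a centered complex Gaussian r.v.\ has vanishing pseudo-variance, apply this to $Z(f)=\int_\X T_x f(x)\,dx$, and conclude by a density argument. Your version is simply more explicit---you carry out the polarization and the density step that the paper compresses into ``it is easy to derive''---and you are right to flag that without mean square continuity one obtains $\zeta=0$ only a.e.
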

\begin{proof} It easy to check that a centered complex valued r.v. $Z$ is complex Gaussian if and only
if $\E[Z^2]=0$. As for every $f\in L^2(\X)$
$$
\int_\X\int_\X \zeta(x,y)f(x)f(y)\, dx dy=\E\Bigl[\Bigl(\int_\X T_xf(x)\, dx\Bigr)^2\Bigr]=0\ ,
$$
it is easy to derive
that $\zeta\equiv0$.
\end{proof}
Going back to the situation of Remark \ref{rem-sfera}, the relation function $\zeta$ of the
random field $T^f$ is easily found to be
\begin{equation}
\zeta^f=\sum_{\ell \ge 0} \beta_\ell^2\, Y_{\ell,0}\ .
\end{equation}
and cannot vanish unless $f\equiv 0$ and $T^f$ vanishes itself.
Therefore no isotropic complex Gaussian random field on the sphere can be obtained by
the construction \paref{def campo}.
\section{Random sections of vector bundles}
We now investigate the case of Gaussian isotropic spin random fields
on $\cS^2$, with the aim of extending the representation result
of Theorem \ref{real-general}. These models have
received recently much attention (see \cite{bib:LS}, \cite{bib:M} or \cite{dogiocam}), being motivated by the modeling of CMB data. Actually our point of view begins from \cite{bib:M}.

We consider first the case of a general vector bundle. Let $\xi= (E,
p, B)$ be a finite-dimensional \emph{complex vector bundle} on the
topological space $B$, which is called the \emph{base space}. The
surjective map
\begin{equation}
p: E\goto B
\end{equation}
is the
\emph{bundle projection}, $p^{-1}(x)$, $x\in B$ is the {\it fiber}
above $x$.
Let us denote $\B(B)$ the Borel $\sigma$-field of $B$.
A section of $\xi$ is a map $u: B \to E$ such that $p \circ u=id_B$. As $E$ is itself a topological space, we can speak of continuous sections.

We suppose from now on that every fibre $p^{-1}(x)$ carries an inner
product and a measure $\mu$ is given on the base space. Hence we can
consider square integrable sections, as those such that
$$
\int_B\langle u(x),u(x)\rangle_{p^{-1}(x)}\,d\mu(x)<+\infty
$$
and define the corresponding $L^2$ space accordingly.

Let $(\Omega, \F, \P)$ be a probability space.
\begin{definition}\label{definizione di sezione aleatoria}
A \emph{random section $T$
of the vector bundle $\xi$} is a collection of $E$-valued random variables
$(T_x)_{x\in B}$ indexed by the elements of the base space $B$ such that
the map $\Omega \times B \ni(\omega, x)  \mapsto T_x(\omega)$
is $\F \otimes \B(B)$-measurable and, for every $\omega$, the path
$$
B\ni x\to T_x(\omega)\in E
$$
is a section of $\xi$, i.e. $p\circ T_\cdot(\omega) =id_B$.
\end{definition}
Continuity of a random section $T$ is easily defined by requiring that
for every $\omega\in \Omega$ the map $x \mapsto T_x$
is a continuous section of $\xi$. Similarly a.s. continuity is defined.
A random section $T$ of $\xi$ is a.s. square integrable if  the map
$x \mapsto \| T_x (\omega)\|^2 _{p^{-1}(x)}$ is a.s. integrable, it is second order if $\E[ \| T_x \|^2_{p^{-1}(x)}] < +\infty$ for every $x\in B$ and
mean square integrable
if
$$
\E\Bigl[\int_B\| T_x \|^2 _{p^{-1}(x)} \, d\mu(x)\Bigr]< +\infty\ .
$$
As already remarked in $\cite{bib:M}$, in defining the notion of mean square continuity for
a random section, the naive approach
$$
\lim_{y\to x} \E [\| T_x - T_y \|^2] =0
$$
is not immediately meaningful as $T_x$ and $T_y$ belong to different
(even if possibly isomorphic) spaces (i.e. the fibers).
A similar difficulty arises for the definition of strict sense invariance w.r.t. the action of a topological group on the bundle.
We shall investigate these points below.

A case of particular interest to us are the homogeneous (or twisted) vector bundles. Let $G$ be a compact group, $K$ a closed subgroup and $\X=G/K$.
Given an irreducible unitary representation $\tau$ of $K$ on the complex (finite-dimensional) Hilbert space $H$,
$K$ acts on the Cartesian product $G\times H$ by the action
$$
k(g,z):= (gk, \tau(k^{-1})z)\ .
$$
Let   $G\times_\tau H=\lbrace \theta(g,z) : (g,z) \in G\times H
\rbrace$ denote the quotient space of the orbits $\theta(g,z) =
\lbrace (gk, \tau(k^{-1})z) : k\in K \rbrace$ under the above
action. $G$ acts on $G\times_\tau H$ by
\begin{equation}\label{action}
h \theta(g,z) := \theta(hg,z)\ .
\end{equation}
The map $G\times H \to \X: (g,z)\to gK$ is constant on the
orbits $\theta(g,z)$ and   induces the projection
$$
G\times_\tau H\ni\theta(g,z)\enspace\mathop{\to}^{\pi_\tau\,}\enspace gK\in \X
$$
which is a continuous $G$-equivariant map. $\xi_\tau=
(G\times_\tau H, \pi_\tau, \X)$ is a $G$-vector bundle: it is the \emph{homogeneous vector bundle associated to
the representation $\tau$}. The  fiber
$\pi^{-1}_\tau(x)$ is isomorphic to $H$ for every $x\in \X$ (see
\cite{B-D}). More precisely, for $x\in\X$ the fiber $\pi_\tau^{-1}(x)$
is the set of
elements of the form $\th(g,z)$ such that $gK=x$. We define the scalar
product of two such elements as
\begin{equation}\label{prod scalare}
\langle \th(g,z),\th(g,w)\rangle_{\pi_\tau^{-1}(x)}=\langle z,w\rangle_{H}
\end{equation}
for some fixed $g\in G$ such that $gK=x$, as it is immediate that this
definition does not depend on the choice of such a $g$.
Given a function $f:G \to H$ satisfying
\begin{equation}\label{funzioni di tipo W}
f(gk)=\tau(k^{-1})f(g)\ ,
\end{equation}
then to it we can associate the section of $\xi_\tau$
\begin{equation}\label{proiezione}
u(x)=u(gK)=\theta(g,f(g))
\end{equation}
as again this is a good definition, not depending
of the choice of $g$ in the coset $gK$. The functions $f$ satisfying to
(\ref{funzioni di tipo W}) are called right $K$-covariant functions
of type $\tau$ (\emph{functions of type $\tau$} from now on).

More interestingly, also the converse is true.
\begin{prop}\label{pullback-s-deterministic}
Given a section $u$ of $\xi_\tau$, there exists a unique function
$f$ of type $\tau$ on $G$ such that $u(x)=\theta(g,f(g))$ where
$gK=x$. Moreover $u$ is continuous if and only if
$f:G\to H$ is continuous.
\end{prop}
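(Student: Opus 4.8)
The plan is to construct $f$ fibrewise, verify the covariance relation and uniqueness by a short algebraic argument, and then treat the two continuity implications separately; the delicate point will be deducing continuity of $f$ from continuity of $u$. The computational engine throughout is the identity $\theta(gk,z)=\theta(g,\tau(k)z)$ for $k\in K$, which follows directly from the defining $K$-action $k(g,z)=(gk,\tau(k^{-1})z)$.

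First I would record that for fixed $g\in G$ the map $H\to\pi_\tau^{-1}(gK)$, $w\mapsto\theta(g,w)$, is a bijection. Injectivity is immediate: if $\theta(g,w)=\theta(g,w')$ then $(g,w')=(gk,\tau(k^{-1})w)$ for some $k\in K$, so $gk=g$ forces $k=e$ and $w'=w$ (the $K$-action is free, being free already on the first coordinate). Surjectivity holds because any element of the fibre over $gK$ is of the form $\theta(g',z)$ with $g'K=gK$, i.e. $g'=gk$, and $\theta(gk,z)=\theta(g,\tau(k)z)$. Given the section $u$, since $u(gK)\in\pi_\tau^{-1}(gK)$ I then define $f(g)$ to be the unique $w\in H$ with $u(gK)=\theta(g,w)$. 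Since $(gk)K=gK$ we have $u(gkK)=u(gK)$, and writing both sides through the definition of $f$ gives $\theta(gk,f(gk))=\theta(g,f(g))$; rewriting the left side as $\theta(g,\tau(k)f(gk))$ and invoking injectivity yields $\tau(k)f(gk)=f(g)$, that is $f(gk)=\tau(k^{-1})f(g)$, so $f$ is of type $\tau$. Uniqueness is then automatic: any $f'$ of type $\tau$ with $u(gK)=\theta(g,f'(g))$ satisfies $f'(g)=f(g)$ by the same injectivity.

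For the continuity equivalence, let $\pi:G\to\X$, $\pi(g)=gK$, be the canonical projection and $Q:G\times H\to G\times_\tau H$, $Q(g,z)=\theta(g,z)$, the (continuous) quotient map. By construction $u\circ\pi=Q\circ(\mathrm{id},f)$. The easy direction is immediate: if $f$ is continuous then $g\mapsto\theta(g,f(g))$ is continuous as a composition, and since $\X$ carries the quotient topology, $\pi$ is a quotient map and $u$ is continuous if and only if $u\circ\pi$ is; hence $u$ is continuous.

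For the converse I would localize. Around any $x_0=g_0K$ use a continuous local section $s:V\to G$ of $\pi$ together with the associated trivialization $\psi:V\times H\to\pi_\tau^{-1}(V)$, $\psi(x,w)=\theta(s(x),w)$, which is a homeomorphism. Writing $g=s(gK)k$ with $k=s(gK)^{-1}g\in K$ gives $\psi^{-1}(\theta(g,w))=(gK,\tau(s(gK)^{-1}g)w)$; feeding in $w=f(g)$ and using the type-$\tau$ relation in the form $\tau(s(gK)^{-1}g)f(g)=f(s(gK))$ yields $\psi^{-1}(u(gK))=(gK,f(s(gK)))$. Continuity of $u$ thus makes $g\mapsto f(s(\pi(g)))$ continuous, whereupon $f(g)=\tau(g^{-1}s(\pi(g)))f(s(\pi(g)))$ is continuous on $\pi^{-1}(V)$ as a product of continuous maps; covering $G$ by such sets finishes the argument. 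The one genuine obstacle is the existence of the continuous local section $s$, i.e. the local triviality of $G\to G/K$ as a principal $K$-bundle (see \cite{B-D}); granted that, everything else is bookkeeping with the relation $\theta(gk,z)=\theta(g,\tau(k)z)$.
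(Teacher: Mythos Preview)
Your argument is correct. The existence and uniqueness part is essentially the paper's: the paper phrases it via a measurable selection $(g_x)_{x\in\X}$ and then extends along $K$, while you use the fibre bijection $w\mapsto\theta(g,w)$ directly, but the content is the same computation with $\theta(gk,z)=\theta(g,\tau(k)z)$.

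Where you diverge is in the continuity equivalence. The paper packages both implications in a single commutative square: with $F(g)=(g,f(g))$ and $pr_1:G\to\X$, $pr_2:G\times H\to G\times_\tau H$, one has $pr_2\circ F=u\circ pr_1$, and the paper concludes that $F$ is continuous iff $u$ is, citing only that $pr_1,pr_2$ are continuous \emph{open} surjections. This is very compact but, as you implicitly recognized, the direction ``$u$ continuous $\Rightarrow$ $F$ continuous'' does not follow from openness of $pr_2$ alone; it ultimately rests on the local triviality of $\xi_\tau$ (equivalently, on local sections of $G\to G/K$), which the paper has invoked when declaring $\xi_\tau$ a $G$-vector bundle. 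Your proof makes this dependence explicit: you produce a local section $s$ and the associated trivialization $\psi$, compute $\psi^{-1}(u(gK))=(gK,f(s(gK)))$, and then recover $f(g)$ from $f(s(\pi(g)))$ via the type-$\tau$ relation. This is longer but more self-contained; the paper's diagram argument is slicker once one accepts that the open quotient $pr_2$ admits local inverses.
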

\begin{proof} Let $(g_x)_{x\in\X}$ be
a measurable selection such that
$g_xK=x$ for every $x\in\X$. If $u(x)=\theta(g_x, z)$, then define
$f(g_x):=z$ and extend the definition to the elements of the coset
$g_xK$ by $f(g_xk):= \tau(k^{-1})z$; it is easy to check that such a
$f$ is of type $\tau$, satisfies (\ref{proiezione}) and is the unique
function of type $\tau$ with this property.

Note that the continuity of $f$ is equivalent to the continuity of
the map
\begin{equation}\label{mappa1}
F: g\in G \to (g,f(g))\in G\times H\ .
\end{equation}
Denote $pr_1: G\to \X$ the canonical projection onto the quotient space $\X$
and $pr_2: G\times H \to G\times_\tau H$ the canonical projection
onto the quotient space $G\times_\tau H$. It is immediate that
\begin{equation*}\label{mappa2}
pr_2 \circ F = u \circ pr_1\ .
\end{equation*}
Therefore $F$ is continuous if and only if $u$ is continuous,
the projections $pr_1$ and $pr_2$ being continuous open mappings.

\end{proof}
We shall again call $f$ the \emph {pullback} of $u$.
Remark that, given two sections $u_1, u_2$ of $\xi_\tau$ and their respective pullbacks $f_1,f_2$, we have
\begin{equation}\label{prod_scalare}
\langle u_1, u_2 \rangle := \int_\X \langle u_1(x),
u_2(x)\rangle_{\pi_\tau^{-1}(x)}\,dx=
\int_G \langle f_1(g),f_2(g)\rangle_H\,dg
\end{equation}
so that $u\longleftrightarrow f$ is an isometry between the space $L^2(\xi_\tau)$ of
the square integrable sections of $\xi_\tau$ and the space $L^2_\tau(G)$ of the square
integrable functions of type $\tau$.

The left regular action of $G$ on $L^2_\tau(G)$ (also called the
\emph{representation of $G$ induced by $\tau$})
$$
L_h f(g) := f(h^{-1}g)
$$
can be equivalently realized on $L^2(\xi_\tau)$ by
\begin{equation}\label{indotta}
 U_h u(x) = h u(h^{-1}x)\ .
\end{equation}
We have
$$
U_h u(gK) = h u(h^{-1}gK) = h \theta( h^{-1} g, f(h^{-1}g)) =
\theta(g, f(h^{-1}g)) = \theta(g, L_h f(g))
$$
so that, thanks to the uniqueness of the pullback function:
\begin{prop}\label{action-pullback}
If $f$ is the pullback function of the section $u$ then $L_hf$ is
the pullback of the section $U_hu$.
\end{prop}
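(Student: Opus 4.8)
The plan is to compute the section $U_h u$ explicitly from the definitions and then invoke the uniqueness part of Proposition \ref{pullback-s-deterministic}. First I would fix $h\in G$ and an arbitrary point $x=gK\in\X$, and evaluate the induced action directly from \paref{indotta}, namely $U_h u(x)=h\,u(h^{-1}x)$. Since $h^{-1}x=h^{-1}gK$, I would choose $h^{-1}g$ as the representative of this coset and apply the pullback representation \paref{proiezione} to obtain $u(h^{-1}gK)=\theta(h^{-1}g,f(h^{-1}g))$.

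Next I would apply the left $G$-action \paref{action} on $G\times_\tau H$, namely $h\,\theta(g',z)=\theta(hg',z)$, with $g'=h^{-1}g$. This yields
$$
U_h u(gK)=h\,\theta(h^{-1}g,f(h^{-1}g))=\theta(g,f(h^{-1}g))=\theta(g,L_h f(g)),
$$
the last equality being just the definition $L_h f(g)=f(h^{-1}g)$. Thus, over the coset $gK$ taken with representative $g$, the section $U_h u$ is represented by the value $L_h f(g)$, which is precisely the form \paref{proiezione} of the pullback correspondence with $L_h f$ in place of $f$.

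Before concluding I would check that $L_h f$ is itself a function of type $\tau$, so that the uniqueness statement legitimately applies. This is a one-line verification: for $k\in K$,
$$
(L_h f)(gk)=f(h^{-1}gk)=\tau(k^{-1})f(h^{-1}g)=\tau(k^{-1})(L_h f)(g),
$$
using that $f$ satisfies \paref{funzioni di tipo W}. Given this, the display above says exactly that $L_h f$ realises $U_h u$ through \paref{proiezione}, and by the uniqueness of the pullback function in Proposition \ref{pullback-s-deterministic} we conclude that $L_h f$ is the pullback of $U_h u$.

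There is no genuine obstacle here: the argument is a direct unwinding of the definitions. The only points requiring a little care are the choice of coset representative --- one must use $h^{-1}g$ (rather than an arbitrary element of $h^{-1}gK$) so that the action formula \paref{action} returns the representative $g$ for the coset $gK$, matching the shape of \paref{proiezione} --- and the verification of the type-$\tau$ covariance of $L_h f$, which is what permits the invocation of uniqueness.
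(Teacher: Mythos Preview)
Your proof is correct and follows exactly the paper's approach: the paper performs the same chain of equalities $U_h u(gK)=h\,\theta(h^{-1}g,f(h^{-1}g))=\theta(g,f(h^{-1}g))=\theta(g,L_hf(g))$ immediately before stating the proposition, and then appeals to the uniqueness of the pullback. Your additional explicit check that $L_hf$ is of type $\tau$ is a nice touch that the paper leaves implicit.
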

Let  $T=(T_x)_{x\in \X}$ be
a random section of the homogeneous vector bundle $\xi_\tau$. As, for
fixed $\omega$, $x\mapsto T_x(\omega)$ is a section of $\xi_\tau$, by Proposition
\ref{pullback-s-deterministic} there
exists a unique function $g\mapsto X_g(\omega)$ of type $\tau$ such that
$T_{gK}(\omega)=\theta(g, X_g(\omega))$. We refer to the
random field $X=(X_g)_{g\in G}$ as the \emph{pullback random field
of $T$}. It is a random field on $G$ of type $\tau$, i.e.  $X_{gk}(\omega)=\tau(k^{-1})X_g(\omega)$ for each $\omega$.
Conversely every random field $X$ on $G$ of type $\tau$ uniquely defines a
random section of $\xi_\tau$ whose pullback random field is $X$. It is immediate that
\begin{prop}\label{prop-pull1}
Let $T$ be a random section of $\xi_\tau$.

a) $T$  is  a.s. square integrable if and only if its pullback random field $X$ is a.s. square
integrable.

b) $T$ is second order if and only if its pullback random field $X$ is second order.

c) $T$ is a.s. continuous if and only if its pullback random field $X$ is a.s. continuous.
\end{prop}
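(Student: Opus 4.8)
The plan is to exploit the isometry between sections of $\xi_\tau$ and functions of type $\tau$ established in Proposition \ref{pullback-s-deterministic} and the isometry of inner products in \paref{prod_scalare}, and to transfer each of the three conditions pointwise (in $\omega$) along the correspondence $T_x(\omega)=\theta(g,X_g(\omega))$. The key observation is the defining identity of the scalar product on the fibres \paref{prod scalare}: for $x=gK$ we have $\|T_x(\omega)\|^2_{\pi_\tau^{-1}(x)}=\langle\theta(g,X_g(\omega)),\theta(g,X_g(\omega))\rangle_{\pi_\tau^{-1}(x)}=\|X_g(\omega)\|_H^2$. Thus the scalar quantity measuring the size of $T$ on the fibre over $x$ equals the $H$-norm of the pullback evaluated at any $g$ with $gK=x$, and this does not depend on the choice of $g$ by type-$\tau$ covariance together with the unitarity of $\tau$. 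This single identity is what drives all three equivalences.

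First I would prove (a). Using the identity above and the integration rule \paref{int-rule} (applied to the scalar nonnegative function $g\mapsto\|X_g(\omega)\|_H^2$, which is right-$K$-invariant since $\tau$ is unitary), one gets for each fixed $\omega$
$$
\int_\X \|T_x(\omega)\|^2_{\pi_\tau^{-1}(x)}\,dx=\int_G \|X_g(\omega)\|_H^2\,dg\ .
$$
Hence the left-hand side is finite if and only if the right-hand side is, which is exactly the statement that $T$ is a.s.\ square integrable if and only if $X$ is. For (b), apply $\E[\,\cdot\,]$ to the pointwise identity $\|T_x\|^2_{\pi_\tau^{-1}(x)}=\|X_{g_x}\|_H^2$ for a fixed choice of $g_x$ with $g_xK=x$; finiteness of $\E[\|T_x\|^2_{\pi_\tau^{-1}(x)}]$ for every $x$ is then equivalent to finiteness of $\E[\|X_g\|_H^2]$ for every $g$, giving the second-order equivalence.

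Part (c) is the one requiring genuine topological care, and is the main obstacle. The statement that $x\mapsto T_x(\omega)$ is a continuous section is, by Proposition \ref{pullback-s-deterministic} applied pathwise (for each fixed $\omega$), precisely equivalent to the continuity of the pullback function $g\mapsto X_g(\omega)$ as a map $G\to H$. I would make this explicit by invoking the argument already given in the proof of Proposition \ref{pullback-s-deterministic}: via the map $F\colon g\mapsto(g,X_g(\omega))$ and the relation $pr_2\circ F=T_\cdot(\omega)\circ pr_1$, continuity of the section and continuity of its pullback are equivalent because $pr_1,pr_2$ are continuous open maps. The delicate point is measurability in $\omega$: one must check that the pullback field $X$ is jointly measurable, i.e.\ that $(\omega,g)\mapsto X_g(\omega)$ is $\F\otimes\B(G)$-measurable, so that ``$X$ is a.s.\ continuous'' is a meaningful (measurable) event. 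This follows from the joint measurability of $T$ built into Definition \ref{definizione di sezione aleatoria} together with a measurable selection $g\mapsto g_x$ of coset representatives, but it is the step I would flag as needing care. Once joint measurability is secured, the pathwise equivalence gives that the $\omega$-set on which $T_\cdot(\omega)$ is continuous coincides with the $\omega$-set on which $X_\cdot(\omega)$ is continuous, so one has probability one simultaneously, completing (c).
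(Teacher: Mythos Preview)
Your argument is correct; the paper in fact offers no proof of this proposition, stating only that it is ``immediate'' from the preceding definitions and Proposition~\ref{pullback-s-deterministic}. Your write-up simply makes explicit the fibrewise identity $\|T_x\|_{\pi_\tau^{-1}(x)}^2=\|X_g\|_H^2$ and the pathwise application of Proposition~\ref{pullback-s-deterministic} that the authors leave to the reader, so there is nothing to correct.
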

Proposition \ref{prop-pull1} introduces the fact that many properties of random sections of
the homogeneous bundle can be stated or investigated through corresponding properties of
their pullbacks, which are just ordinary random fields to whom the results of previous
sections can be applied. A first instance is the following definition.
\begin{definition}\label{definizione di continuita in media quadratica}
The random section $T$ of the homogeneous vector bundle $\xi_\tau$ is
said to be \emph{mean square continuous} if its  pullback  random
field $X$ is mean square continuous, i.e.,
\begin{equation}
\lim_{h\to g} \E [ \| X_h - X_g \|_H^2 ]=0\ .
\end{equation}
\end{definition}
Recalling Definition \ref{invarian}, we state now the definition of
strict-sense invariance.
Let $T$ be an a.s. square integrable random section of
$\xi_\tau$.
For every $g\in G$, the ``rotated'' random section
$T^g$ is defined as
\begin{equation}
T^g_x(\cdot):= g^{-1} T_{gx}(\cdot)
\end{equation}
which is still an a.s. square integrable random section of $\xi_\tau$. For any square integrable
section $u$ of $\xi_\tau$, let
\begin{equation}
T(u):= \int_{\X} \langle T_x, u(x)\rangle_{\pi^{-1}(x)}\,dx\ .
\end{equation}
\begin{definition}\label{isotropia per sezioni aleatorie}
Let $T$ be an a.s. square integrable random section of the homogeneous vector bundle
$\xi_\tau$. It is said to be \emph{(strict-sense) $G$-invariant}
or \emph{isotropic} if and only if
for every choice of square integrable sections
$u_1, u_2, \dots, u_m$ of $\xi_\tau$, the random vectors
\begin{equation}\label{= in legge}
\bigl( T(u_1), \dots, T(u_m) \bigr)\quad
\mbox{and} \quad\bigl( T^g(u_1), \dots, T^g(u_m) \bigr)=\bigl( T( U_g u_1),  \dots, T( U_g u_m) \bigr)
\end{equation}
have the same law for every $g\in G$.
\end{definition}
\begin{prop}\label{pullback-invariant}
Let $T$ be an a.s. square integrable random section of $\xi_\tau$ and let
$X$ be its pullback random field on $G$. Then $X$ is isotropic
if and only if $T$ is an isotropic random section.
\end{prop}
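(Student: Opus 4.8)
The plan is to transport the invariance condition for random sections (Definition \ref{isotropia per sezioni aleatorie}, phrased through the operators $U_g$ on $L^2(\xi_\tau)$) into an invariance condition for the pullback field $X$ on $G$ (phrased through the left regular action $L_g$ on $L^2_\tau(G)$), using the isometry $u\leftrightarrow f$ of \paref{prod_scalare} together with the intertwining property of Proposition \ref{action-pullback}. First I would record the identity relating the two pairings: if $u$ is a square integrable section of $\xi_\tau$ with pullback function $f$, then reading \paref{prod_scalare} pathwise (with $T_\cdot(\omega)$ the section whose pullback is $X_\cdot(\omega)$) gives
$$
T(u)=\int_\X \langle T_x,u(x)\rangle_{\pi_\tau^{-1}(x)}\,dx=\int_G \langle X_g,f(g)\rangle_H\,dg=:X(f)\ ,
$$
an equality of random variables. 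Thus pairing the section $T$ against $u$ is the same as pairing the pullback field $X$ against $f$.

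Next, by Proposition \ref{action-pullback} the pullback of $U_gu$ is $L_gf$, whence $T^g(u)=T(U_gu)=X(L_gf)$. Consequently, for sections $u_1,\dots,u_m$ with pullbacks $f_1,\dots,f_m$,
$$
\bigl(T(u_1),\dots,T(u_m)\bigr)=\bigl(X(f_1),\dots,X(f_m)\bigr)\ ,
$$
$$
\bigl(T^g(u_1),\dots,T^g(u_m)\bigr)=\bigl(X(L_gf_1),\dots,X(L_gf_m)\bigr)\ .
$$
Since $u\leftrightarrow f$ is a bijection of $L^2(\xi_\tau)$ onto $L^2_\tau(G)$, as the $u_i$ range over all square integrable sections the $f_i$ range over all of $L^2_\tau(G)$. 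Comparing the two displays with Definition \ref{isotropia per sezioni aleatorie} shows that $T$ is isotropic if and only if the finite dimensional distributions of $(X(f_1),\dots,X(f_m))$ are invariant under $f_i\mapsto L_gf_i$ for every $g\in G$ and every choice of $f_i\in L^2_\tau(G)$, which is precisely the isotropy of $X$.

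The one point requiring care — and the main obstacle — is that $X$ is $H$-valued and of type $\tau$, so the test functions naturally attached to it lie in $L^2_\tau(G)$, whereas the general notion of isotropy for a field on $G$ (Definition \ref{invarian}) would pair $X$ against arbitrary $F\in L^2(G,H)$. To reconcile the two I would introduce the projection $P_\tau F(g):=\int_K \tau(k)F(gk)\,dk$ onto type-$\tau$ functions and verify two facts: that $X(F)=X(P_\tau F)$ for every $F$, which follows from $X_{gk}=\tau(k^{-1})X_g$ and the right invariance of $dg$ by averaging over $K$; and that $P_\tau L_g=L_gP_\tau$, since left translation commutes with right-$K$-averaging. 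Together these show that invariance tested on all of $L^2(G,H)$ is equivalent to invariance tested on $L^2_\tau(G)$, which closes the gap between the two formulations and yields the stated equivalence.
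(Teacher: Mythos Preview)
Your argument is correct and follows essentially the same route as the paper: translate the section pairing $T(u)$ into the pullback pairing $X(f)$ via \paref{prod_scalare}, use Proposition \ref{action-pullback} to intertwine $U_g$ with $L_g$, and then bridge the gap between test functions in $L^2_\tau(G)$ and in all of $L^2(G,H)$. The only cosmetic difference is in this last step: the paper writes an arbitrary $f\in L^2(G,H)$ as $f^{(1)}+f^{(2)}$ with $f^{(1)}\in L^2_\tau(G)$ and $f^{(2)}\in L^2_\tau(G)^\perp$ and observes that $X(f^{(2)})=0$ because the paths of $X$ lie in $L^2_\tau(G)$, whereas you construct the projection explicitly as $P_\tau F(g)=\int_K\tau(k)F(gk)\,dk$ and verify $X(F)=X(P_\tau F)$ and $P_\tau L_g=L_gP_\tau$. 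These are two phrasings of the same fact, so the arguments are equivalent.
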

\begin{proof}
Let us denote $X(f) := \int_G \langle X_g, f(g) \rangle_H\,dg$.
Thanks to Proposition \ref{action-pullback} the equality in law (\ref{= in legge}) is equivalent to the requirement that
for every choice of square integrable functions $f_1, f_2, \dots, f_m$ of type $\tau$ (i.e. the pullbacks of corresponding sections of $\xi_\tau$), the random vectors
\begin{equation}\label{pullback invariante}
( X(f_1), \dots, X(f_m) )\quad \mbox{and}\quad( X(L_g f_1), \dots, X(L_g  f_m) )
\end{equation}
have the same law for every $g\in G$.
As $L^2_\tau(G)$ is a closed subspace of $L^2(G)$ and is invariant under the left regular
representation of $G$, every square integrable function $f:G\to H$
can be written as the sum $f^{(1)}+ f^{(2)}$
with $f^{(1)}\in L^2_\tau(G)$, $ f^{(2)}\in L^2_\tau(G)^{\bot}$. As the paths of the random field $X$ are of type $\tau$ we have $X(f^{(2)})=X(L_h f^{(2)})=0$ for every $h\in G$ so that
\begin{equation}
X(f)=X(f^{(1)})\quad  \mbox{and}\quad  X(L_h f) = X(L_h f^{(1)})\ .
\end{equation}
Therefore (\ref{pullback invariante}) implies that, for every choice $f_1, f_2, \dots, f_m$
of square integrable $H$-valued functions on $G$, the random vectors
\begin{equation}
( X(L_g f_1),  \dots, X(L_g f_m) )\quad  \mbox{and} \quad  ( X(f_1),  \dots, X(f_m) )
\end{equation}
have the same law for every $h\in G$ so that the pullback random field $X$ is a strict-sense isotropic random field on $G$.

\end{proof}
As a consequence of Proposition \ref{mean square continuous}
(see also \cite{mp-continuity}) we have
\begin{cor} Every a.s.
square integrable, second order and isotropic random section $T$ of
the homogeneous vector bundle $\xi_\tau$ is mean square
continuous.
\end{cor}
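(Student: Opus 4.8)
The plan is to reduce the statement about random sections to the corresponding statement about ordinary random fields on the group $G$, using the pullback machinery already developed. The final corollary asserts that every a.s. square integrable, second order, isotropic random section $T$ of $\xi_\tau$ is mean square continuous. By Definition~\ref{definizione di continuita in media quadratica}, mean square continuity of $T$ is \emph{defined} to mean precisely the mean square continuity of its pullback random field $X=(X_g)_{g\in G}$, i.e. $\lim_{h\to g}\E[\|X_h-X_g\|_H^2]=0$. So the entire content of the corollary is to verify that $X$ satisfies the hypotheses of Proposition~\ref{mean square continuous}, which is the scalar analogue already established for random fields on a homogeneous space.

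First I would collect the three properties of $X$ from the three parts of Proposition~\ref{prop-pull1} together with Proposition~\ref{pullback-invariant}: since $T$ is a.s. square integrable, part~a) gives that $X$ is a.s. square integrable; since $T$ is second order, part~b) gives that $X$ is second order; and by Proposition~\ref{pullback-invariant}, since $T$ is isotropic, $X$ is an isotropic (strict-sense $G$-invariant) random field on $G$. Thus $X$ is an a.s. square integrable, isotropic, second order random field on $G$, which is itself a homogeneous space of the compact group $G\times G$ under the two-sided action (or, more simply, one may invoke Proposition~\ref{mean square continuous} directly for random fields on $G$ viewed as a homogeneous space of the appropriate group).

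The one point that needs slight care is that Proposition~\ref{mean square continuous} is stated for a random field on a homogeneous space $\X$ of a compact group, whereas $X$ lives on the group $G$ itself and takes values in the Hilbert space $H$ rather than in $\C$. The vector-valued nature is harmless: one argues componentwise with respect to an orthonormal basis of $H$, since $\|X_h-X_g\|_H^2=\sum_i |\langle X_h-X_g,e_i\rangle|^2$ and each scalar component is an a.s. square integrable, second order, isotropic scalar random field on $G$ to which Proposition~\ref{mean square continuous} applies. The group $G$ is itself a homogeneous space (of $G$ acting on itself by left translation, or of $G\times G$ by two-sided translation), so the hypothesis on the index space is met.

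The main obstacle, such as it is, is purely bookkeeping: one must check that the notion of isotropy transported to $X$ in Proposition~\ref{pullback-invariant} matches the notion of $G$-invariance required by Proposition~\ref{mean square continuous}, and that the vector-valued second order and square integrability conditions pass correctly to the scalar components. There is no genuine analytic difficulty once these identifications are in place, since all the real work was already carried out in Proposition~\ref{mean square continuous} and in the pullback correspondence. Hence the proof is essentially a one-line invocation: apply Proposition~\ref{mean square continuous} to the pullback random field $X$, whose required properties are furnished by Propositions~\ref{prop-pull1} and~\ref{pullback-invariant}, and conclude mean square continuity of $T$ by Definition~\ref{definizione di continuita in media quadratica}.
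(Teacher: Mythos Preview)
Your proposal is correct and follows exactly the route the paper intends: the corollary is stated in the paper as an immediate consequence of Proposition~\ref{mean square continuous} (via the pullback), with no further proof given. Your additional remarks on handling the $H$-valued case componentwise and on viewing $G$ as a homogeneous space are sound and fill in details the paper leaves implicit.
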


In order to make a comparison with the pullback approach developed above, we briefly recall
the approach to the theory of random fields in vector bundles introduced by Malyarenko in \cite{bib:M}.
The main tool is the
scalar random field associated to the random section $T$ of $\xi=(E,p,B)$.
More precisely, it is the complex-valued random field $T^{sc}$ indexed by the elements $\eta\in E$ given by
\begin{equation}\label{scalar random field}
T^{sc}_{\eta} := \langle \eta, T_{b} \rangle_{p^{-1}(b)}, \; b\in B, \eta\in p^{-1}(b)\ .
\end{equation}
$T^{sc}$ is a scalar random field on $E$ and we can give the definition that $T$ is mean square continuous
if and only if $T^{sc}$  is mean square continuous, i.e., if the map
\begin{equation}
E \ni \eta \mapsto T^{sc}_{\eta}\in L_\C^2(\P)
\end{equation}
is continuous.
Given a  topological group $G$ acting with a continuous action
$(g,x)\mapsto gx, g\in G$ on the base space
$B$, an action of $G$ on $E$ is called associated if its restriction to any fiber $p^{-1}(x)$ is a linear isometry between
$p^{-1}(x)$ and $p^{-1}(gx)$.  In our case of interest, i.e. the homogeneous vector
bundles $\xi_\tau=(G\times_\tau H, \pi_\tau, \X)$, we can consider the action defined in \paref{action} which is obviously associated. We can now define that $T$ is strict sense $G$-invariant w.r.t. the action of $G$ on $B$ if the finite-dimensional distributions of $T^{sc}$ are invariant under the associated action \paref{action}. In the next statement we prove the equivalence of the two approaches.
\begin{prop} The square integrable random section $T$ of the homogeneous bundle $\xi_\tau$ is mean square continuous (i.e. its pullback random field on $G$ is mean square continuous)  if and only if the associated scalar random field $T^{sc}$ is mean square continuous. Moreover if $T$ is a.s. continuous then it is isotropic if and only if the associated scalar random field $T^{sc}$ is $G$-invariant.
\end{prop}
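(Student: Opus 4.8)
The plan is to translate both assertions into equivalent statements about the pullback random field $X=(X_g)_{g\in G}$ of type $\tau$, to which Proposition \ref{prop-pull1} and Proposition \ref{pullback-invariant} already apply. The bridge is the identity obtained from \paref{scalar random field} and \paref{prod scalare}: for $\eta=\theta(g,z)\in\pi_\tau^{-1}(gK)$,
\begin{equation*}
T^{sc}_{\theta(g,z)}=\langle \theta(g,z),\theta(g,X_g)\rangle_{\pi_\tau^{-1}(gK)}=\langle z,X_g\rangle_H,
\end{equation*}
and, since the associated action \paref{action} sends $\theta(g,z)$ to $\theta(ag,z)$, one has $T^{sc}_{a\,\theta(g,z)}=\langle z,X_{ag}\rangle_H$. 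Thus $T^{sc}$ is exactly the family of scalar products of the $H$-valued field $X$ against constant vectors, read along all fibres.

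For the mean square continuity equivalence I would argue as follows. Since $pr_2:G\times H\to G\times_\tau H$ is a continuous open surjection, hence a quotient map, $T^{sc}:E\to L^2_\C(\P)$ is continuous if and only if its composition $(g,z)\mapsto T^{sc}_{\theta(g,z)}=\langle z,X_g\rangle_H$ is continuous on $G\times H$. Assuming $X$ mean square continuous, I would estimate, for $(h,w)\to(g,z)$,
\begin{equation*}
\|\langle w,X_h\rangle-\langle z,X_g\rangle\|_{L^2(\P)}\le \|w-z\|_H\,\|X_h\|_{L^2(\P;H)}+\|z\|_H\,\E[\|X_h-X_g\|_H^2]^{1/2},
\end{equation*}
where the first term vanishes because $g\mapsto\|X_g\|_{L^2(\P;H)}$ is continuous (hence locally bounded) while $\|w-z\|_H\to0$, and the second vanishes by mean square continuity of $X$. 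For the converse, fix an orthonormal basis $e_1,\dots,e_{\dim H}$ of $H$; as $h\to g$ one has $\theta(h,e_j)\to\theta(g,e_j)$ in $E$, so $\langle e_j,X_h\rangle=T^{sc}_{\theta(h,e_j)}\to T^{sc}_{\theta(g,e_j)}=\langle e_j,X_g\rangle$ in $L^2(\P)$ for each $j$, and Parseval gives
\begin{equation*}
\E[\|X_h-X_g\|_H^2]=\sum_{j}\E\big[\,|\langle e_j,X_h-X_g\rangle|^2\big]\longrightarrow 0,
\end{equation*}
the sum being finite. Hence $X$, and therefore $T$, is mean square continuous.

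For the isotropy equivalence I would first record that, by the bridge identity, $G$-invariance of the finite-dimensional distributions of $T^{sc}$ is precisely the requirement that $(\langle z_i,X_{g_i}\rangle)_i$ and $(\langle z_i,X_{ag_i}\rangle)_i$ have the same law for all $a$, all $g_i$ and all $z_i\in H$; testing against all $z_i$ (and comparing characteristic functionals), this is equivalent to the left-invariance of the finite-dimensional laws of $X$, i.e. $(X_{g_1},\dots,X_{g_m})\stackrel{d}{=}(X_{ag_1},\dots,X_{ag_m})$ for every $a$. On the other hand, by Proposition \ref{pullback-invariant}, $T$ is isotropic (Definition \ref{isotropia per sezioni aleatorie}) if and only if $X$ is isotropic in the $L^2$-functional sense of Definition \ref{invarian}, relative to the functionals $X(f)=\int_G\langle X_h,f(h)\rangle_H\,dh$. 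It therefore remains to identify these two invariance notions for $X$, which is exactly the equivalence of \paref{l2-invar} and \paref{invar-continui1} discussed after Definition \ref{invarian}: the $L^2$-invariance always forces the finite-dimensional invariance (see \cite{mp-continuity}), giving $T$ isotropic $\Rightarrow T^{sc}$ $G$-invariant; and, since $T$ a.s. continuous yields $X$ a.s. continuous by Proposition \ref{prop-pull1}, the finite-dimensional invariance conversely yields the $L^2$-invariance, giving $T^{sc}$ $G$-invariant $\Rightarrow T$ isotropic.

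The routine part is the component bookkeeping and the Cauchy--Schwarz/Parseval estimates, which go through because $\dim H<+\infty$ and $\tau$ is unitary. The real work is the passage between finite-dimensional and $L^2$-functional invariance of $X$: in the converse direction each $X(f)$ must be approximated by Riemann sums $\sum_k\langle X_{h_k},f(h_k)\rangle_H\,\mu(A_k)$ along refining partitions, which are fixed linear combinations of point values to which the finite-dimensional invariance applies, while $X(L_af)$ matches, after the Haar change of variable, the sums evaluated at the translated points $ah_k$; the limits are then taken in distribution using a.s. continuity of $X$ and compactness of $G$. This is the only place where a.s. continuity of $T$ is genuinely needed, and it is precisely the ``standard approximation argument'' alluded to after Definition \ref{invarian}, here applied coordinatewise to the $H$-valued field $X$.
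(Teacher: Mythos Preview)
Your proof is correct and follows essentially the same route as the paper: factor $T^{sc}$ through the open quotient $pr_2:G\times H\to G\times_\tau H$ to obtain the auxiliary field $(g,z)\mapsto\langle z,X_g\rangle_H$, then reduce both equivalences to statements about the pullback $X$. For mean square continuity the paper argues componentwise in both directions, whereas you use a direct Cauchy--Schwarz/triangle estimate for the forward implication; both are fine. For the isotropy part you are in fact more explicit than the paper, which simply declares the equivalence between $G$-invariance of $X^{sc}$ and isotropy of $X$ to be ``obvious'': you correctly isolate the passage between finite-dimensional invariance and the $L^2$-functional invariance of Definition~\ref{invarian}, and pinpoint that the a.s.\ continuity hypothesis is needed precisely for the converse, via the approximation argument mentioned after that definition.
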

\begin{proof}
Let $X$ be the pullback random field of $T$. Consider the scalar random field on $G\times H$ defined as $X^{sc}_{(g,z)} := \langle z, X_g \rangle_H$. Let us denote $pr$ the projection $G\times H\to G\times_\tau H$: keeping in mind (\ref{prod scalare}) we have
\begin{equation}\label{2=}
T^{sc} \circ pr =X^{sc}\ ,
\end{equation}
i.e.
$$
T^{sc}_{\th(g,z)} (\omega) = X^{sc}_{(g,z)}(\omega)
$$
for every $(g,z)\in G\times H$, $\omega\in \Omega$.
Therefore the map $G\times_\tau H \ni \theta(g,z) \mapsto T^{sc}_{\theta(g,z)}\in L^2_\C(P)$ is continuous if and only if the map $G\times H \ni (g,z) \mapsto X^{sc}_{(g,z)}\in L^2_\C(P)$ is continuous, the projection $pr$ being open and continuous.
Let us  show that the continuity of the latter map is equivalent to the mean square continuity of the pullback random field $X$, which will allow to conclude.
The proof of this equivalence is inspired by the one of a similar statement in \cite{bib:M}, $\S 2.2$.

Actually consider an orthonormal basis $\lbrace v_1, v_2, \dots, v_{\dim\tau} \rbrace$ of $H$, and
denote $X^i=\langle X,v_i\rangle$ the $i$-th component of $X$ w.r.t. the above basis. Assume that the map $G\times H \ni (g,z) \mapsto X^{sc}_{(g,z)}\in L^2_\C(P)$ is continuous, then the random field on $G$
$$
g\mapsto   X^{sc}_{(g,v_i)}=\overline{ X_g ^i}
$$
is continuous for every $i=1, \dots, \dim\tau$.
As $\E[| \overline{X_g^i}-\overline{X_h^i}|^2]=\E [| X_g^i - X_h^i|^2]$,
$$
\lim_{h\to g} \E [\| X_g - X_h \|_H^2] =\lim_{h\to g} \sum_{i=1}^{\dim\tau}
\E [| X_g^i - X_h^i|^2] = 0\ .
$$
Suppose that the pullback random field $X$ is mean square continuous.
Then for each $i=1, \dots, \dim\tau$
$$
\dlines{
0 \le \limsup_{h\to g} \E[ | X_g^i - X_h^i|^2] \le \lim_{h\to g} \E[ \| X_g - X_h \|^2_H ]= 0
}$$
so that the maps $G\ni g\mapsto    X_g ^i \in L^2_\C(\P)$ are continuous.
Therefore
$$\dlines{
\lim_{(h,w) \to (g,z)} \E [| X^{sc}_{(h,w)} - X^{sc}_{(g,z)} |^2] \le 2 \sum_{i=1}^{\dim\tau} \lim_{(h,w) \to (g,z)} \E[|w_i X_h^i - z_i X_g^i |^2] = 0\ ,
}$$
$a_i$ denoting the $i$-th component of $a\in H$.

Assume that $T$ is a.s. continuous and let us show that it is  isotropic if and only if the associated scalar random field $T^{sc}$ is $G$-invariant.
Note first that, by \paref{2=} and $(T^{sc})^h=(X^{sc})^h\circ pr$ for any $h\in G$,  $T^{sc}$ is $G$-invariant if and only if  $X^{sc}$ is $G$-invariant.
Actually if the random fields $X^{sc}$ and its rotated $(X^{sc})^h$
have the same law, then $T^{sc} \mathop{=}^{law} X^{sc}$ and
vice versa.
Now recalling the definition of $X^{sc}$, it is obvious that  $X^{sc}$ is $G$-invariant
if and only if $X$ is isotropic.

\end{proof}
\section{Random sections of the homogeneous line bundles on $\cS^2$}
We now concentrate on the case of the homogeneous line bundles on
$\X=\cS^2$ with $G=SO(3)$ and $K\cong SO(2)$.  For every character $\chi_s$ of $K$, $s\in\Z$, let $\xi_s$ be the corresponding homogeneous vector bundle on $\cS^2$, as
explained in the previous section.
Given the action of $K$ on $SO(3)\times \mathbb{C}$:
$k(g,z)=(gk, \chi_s(k^{-1})z)$, $k\in K$, let $\mE_s:=SO(3)\times_s\C$ be the space of the orbits
$\mE_s=\lbrace \theta(g,z), (g,z)\in G\times \mathbb{C}\rbrace$
where $\theta(g,z) = \lbrace (gk, \chi_s(k^{-1})z); k\in K \rbrace$.
If  $\pi_s:  \mE_s \ni\theta(g,z)\to gK\in \cS^2$,
$\xi_s=(\mE_s, \pi_s, \cS^2)$ is an homogeneous line bundle (each fiber $\pi_s^{-1}(x)$ is isomorphic  to $\C$ as a vector space).

The space $L^2(\xi_s)$ of the square integrable sections of
$\xi_s$ is therefore isomorphic to the space $L^2_s(SO(3))$ of the
square integrable \emph{functions of type $s$}, i.e. such that, for every $g\in G$ and $k \in K$,
\begin{equation}
f(gk)=\chi_s(k^{-1})f(g)=\overline{\chi_s(k)}f(g)\ .
\end{equation}
Let us investigate the Fourier expansion of a function of type $s$.

\begin{prop}\label{infinite-linear}
Every function of type $s$ is an infinite linear combination of the
functions appearing in the $(-s)$-columns of Wigner's $D$ matrices
$D^\ell$, $\ell \ge |s|$. In particular functions of type $s$ and type
$s'$ are orthogonal if $s\not=s'$.

\end{prop}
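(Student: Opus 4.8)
The plan is to expand an arbitrary type-$s$ function $f\in L^2_s(SO(3))$ by the Peter--Weyl theorem \paref{PW SO(3)} and then use the covariance condition $f(gk)=\overline{\chi_s(k)}f(g)$ to show that its Fourier coefficients $\widehat f(\ell)_{n,m}$ must vanish whenever $n\neq -s$. First I would recall that the matrix entries of the Fourier coefficient are
$$
\widehat f(\ell)_{n,m}=\sqrt{2\ell+1}\int_{SO(3)}f(g)\,\overline{D^\ell_{m,n}(g)}\,dg\ ,
$$
as follows from \paref{coefficiente ellesimo} together with the unitarity relation $D^\ell_{n,m}(g^{-1})=\overline{D^\ell_{m,n}(g)}$. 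The key device is the right invariance of the Haar measure: substituting $g\mapsto gk$ for a fixed $k\in K$ leaves this integral unchanged, and then I would insert the two transformation rules $f(gk)=\overline{\chi_s(k)}f(g)$ and $D^\ell_{m,n}(gk)=D^\ell_{m,n}(g)\chi_n(k)$, the latter being the special case $k_1=e$ of \paref{prop fnz di Wigner}. This yields
$$
\widehat f(\ell)_{n,m}=\overline{\chi_s(k)}\,\overline{\chi_n(k)}\,\widehat f(\ell)_{n,m}=\overline{\chi_{s+n}(k)}\,\widehat f(\ell)_{n,m}\ ,
$$
valid for every $k\in K$.

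From here the argument is immediate: if $\widehat f(\ell)_{n,m}\neq0$ then $\chi_{s+n}(k)=1$ for all $k\in K$, and since the characters of $K\cong SO(2)$ are pairwise distinct this forces $s+n=0$, i.e. $n=-s$. Hence in the expansion \paref{PW SO(3)} only the terms with $n=-s$ survive, and since such a column index requires $|{-s}|\le \ell$, only the matrices $D^\ell$ with $\ell\ge|s|$ contribute. Thus
$$
f=\sum_{\ell\ge|s|}\sqrt{2\ell+1}\sum_{m=-\ell}^{\ell}\widehat f(\ell)_{-s,m}\,D^\ell_{m,-s}
$$
with convergence in $L^2(SO(3))$, which exhibits $f$ as an infinite linear combination of the functions in the $(-s)$-columns of the Wigner matrices, as claimed.

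For the orthogonality statement I would simply observe that a type-$s$ function lies in the closed span of $\{D^\ell_{m,-s}:\ell\ge|s|,\ |m|\le\ell\}$ and a type-$s'$ function in the closed span of $\{D^\ell_{m,-s'}\}$. If $s\neq s'$ then $-s\neq-s'$, so the two families involve disjoint column indices; by the Peter--Weyl orthogonality relations the functions $D^\ell_{m,n}$ are mutually orthogonal whenever their index triples differ, and in particular any $D^\ell_{m,-s}$ is orthogonal to any $D^{\ell'}_{m',-s'}$. Orthogonality of the two subspaces follows at once.

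I do not expect a genuine obstacle here: the content is essentially bookkeeping with the transformation rule \paref{prop fnz di Wigner} and the multiplicativity $\chi_s\chi_n=\chi_{s+n}$ of the characters of $SO(2)$. The only point demanding a little care is the very first step --- recognising that right invariance of $dg$ is what converts the covariance hypothesis on $f$ into an eigenvalue relation for each individual Fourier coefficient --- together with a check of the sign and conjugation conventions, so that the surviving column is indeed $-s$ rather than $+s$.
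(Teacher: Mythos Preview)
Your proof is correct and follows essentially the same approach as the paper. The only cosmetic difference is that the paper carries out the computation at the operator level, deriving $\widehat f(\ell)=\chi_s(k)D^\ell(k)\widehat f(\ell)$ and then reading off that the image lies in $H_\ell^{(-s)}$, whereas you work entrywise from the start; the substance---right invariance of Haar measure combined with the covariance of $f$ and \paref{prop fnz di Wigner}---is identical.
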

\begin{proof}
For every $\ell \ge |s|$, let $\widehat f(\ell)$ be as in (\ref{coefficiente ellesimo}). We have,
for every $k\in K$,
\begin{equation}\label{leading}
\begin{array}{c}
\displaystyle\widehat f(\ell)
 = \sqrt{2\ell+1}\,\int_{SO(3)}f(g)D^\ell(g^{-1})\,dg =
 \sqrt{2\ell+1}\,\chi_s(k)\int_{SO(3)} f(gk) D^\ell(g^{-1})\,dg=\\
\displaystyle=\sqrt{2\ell+1}\,\chi_s(k)\int_{SO(3)} f(g) D^\ell(kg^{-1})\,dg
=\sqrt{2\ell+1}\,\chi_s(k)D^\ell(k)\int_{SO(3)} f(g)
D^\ell(g^{-1})\,dg=\\
 =\chi_s(k)D^\ell(k)\widehat f(\ell) \ ,\\
\end{array}
\end{equation}
i.e. the image of $\widehat f(\ell)$ is contained in the subspace
$H_{\ell}^{(-s)} \subset H_\ell$ of the vectors such that $D^\ell(k)v=
\chi_{-s}(k)v$ for every $k\in K$. In particular $\widehat f(\ell) \ne 0$ only if
$\ell \ge |s|$, as for every $\ell$ the restriction to $K$ of the representation
$D^\ell$
is unitarily equivalent to the direct sum of the representations $\chi_m$,
$m=-\ell, \dots, \ell$ as recalled at the end of \S2.

Let $\ell \ge |s|$ and $v_{-\ell}, v_{-\ell + 1}, \dots, v_{\ell}$ be the orthonormal basis of $H_\ell$
as in (\ref{restrizione}), in other words $v_m$ spans $H_{\ell}^{m}$, the
one-dimensional subspace of $H_\ell$
formed by the vectors that transform under the action of $K$ according to the
representation $\chi_{m}$.
It is immediate that
\begin{align}
\widehat f(\ell)_{i,j} = \langle \widehat f(\ell)  v_j, v_i \rangle=0\ ,
\end{align}
unless $i=-s$. Thus the Fourier coefficients of $f$ vanish but those corresponding to
the column $(-s)$ of the matrix representation $D^\ell$ and the Peter-Weyl expansion  (\ref{PW SO(3)}) of $f$ becomes, in  $L^2(SO(3))$,
\begin{equation}\label{sviluppo per una funzione di tipo s}
f=\sum_{\ell \ge |s|} \sqrt{2\ell + 1} \sum_{m=-\ell}^{\ell}
\widehat f(\ell)_{-s,m} D^\ell_{m,-s}\ .
\end{equation}
\end{proof}
We introduced the spherical harmonics in (\ref{armoniche sferiche1}) from the entries $D^{\ell}_{m,0}$ of the central columns of Wigner's $D$ matrices.
Analogously to the case of $s=0$, for any $s\in \Z$ we define for $\ell \ge |s|, m=-\ell, \dots, \ell$
\begin{equation}\label{armoniche di spin}
_{-s} Y_{\ell,m} (x) := \theta \Bigl( g, \sqrt{\frac{2\ell +1}{4\pi}}\, \overline{D^\ell_{m,s}(g)} \Bigr)\ , \qquad x=gK\in \cS^2\ .
\end{equation}
$ _{-s} Y_{\ell,m}$
is a section of $\xi_s$ whose pullback function (up to a factor)
is $g\mapsto D^{\ell}_{-m,-s}(g)$ (recall the relation
$\overline{D^\ell_{m,s}(g)} = (-1)^{m-s} D^{\ell}_{-m,-s}(g)$, see \cite{dogiocam} p.~55 e.g.).
Therefore thanks to Proposition \ref{infinite-linear} the sections
$_{-s} Y_{\ell,m},\, \ell \ge |s|, m=-\ell, \dots, \ell$, form an
\emph{orthonormal} basis of $L^2(\xi_s)$. Actually recalling
(\ref{prod scalare}) and (considering the total mass equal to $4\pi$ on the sphere and to $1$ on $SO(3)$)
$$
\int_{\cS^2}\, _{-s} Y_{\ell,m}\, \overline{_{-s}Y_{\ell',m'}}\,dx =
4\pi \int_{SO(3)} \sqrt{\frac{2\ell +1}{4\pi}}\, \overline{D^\ell_{m,s}(g)}\,
\sqrt{\frac{2\ell' +1}{4\pi}}\,D^{\ell'}_{m',s}(g)\,dg = \delta_{\ell'}^{\ell}\delta_{m'}^{m}\ .
$$
The sections  $_{-s} Y_{\ell,m},\, \ell \ge |s|, m=-\ell, \dots, \ell$ in
(\ref{armoniche di spin}) are called  \emph{spin $-s$ spherical harmonics}.
Recall that the spaces $L^2_s(SO(3))$ and $L^2(\xi_s)$ are isometric through
the identification $u \longleftrightarrow f$ between a section $u$ and its
pullback $f$ and the definition of the scalar product on $L^2(\xi_s)$ in
(\ref{prod_scalare}).
Proposition (\ref{infinite-linear}) can be otherwise stated as

\noindent \emph{Every square integrable section $u$ of the homogeneous line
bundle $\xi_s=(\mE_s, \pi_s, \cS^2)$ admits a Fourier expansion in terms of spin
$-s$ spherical harmonics of the form
\begin{equation}
u(x) = \sum_{\ell \ge |s|} \sum_{m=-\ell}^{\ell} u_{\ell,m}\, _{-s} Y_{\ell,m}(x)\ ,
\end{equation}
where $u_{\ell,m} := \langle u,\, _{-s} Y_{\ell,m} \rangle_2$, the above series converging in $L^2(\xi_s)$.}

In particular we have the relation
$$
\dlines{
 u_{\ell,m} = \int_{\cS^2} u(x)\,   _{-s} Y_{\ell,m}(x) \,dx =
4\pi \int_{SO(3)} f(g) \sqrt{ \frac{2\ell +1}{4\pi}} D^\ell_{m,s}(g)\,dg =\cr
(-1)^{s-m} \sqrt{4\pi (2\ell+1)} \int_{SO(3)} f(g)  \overline{  D^\ell_{-m,-s}(g)}\,dg = (-1)^{s-m} \sqrt{4\pi}\, \widehat f(\ell)_{-s,-m}\ . }
$$
\begin{definition}
Let $s\in \Z$. A square integrable function $f$ on $SO(3)$ is said
to be \emph{bi-$s$-associated} if for every $g\in SO(3), k_1, k_2 \in K$,
\begin{equation}
f(k_1 g k_2) = \chi_s (k_1) f(g) \chi_s(k_2)\ .
\end{equation}
\end{definition}
Of course for $s=0$ {bi-$0$-associated} is equivalent to bi-$K$-invariant.
We are particularly interested in bi-$s$-associated functions
as explained in the remark below.
\begin{remark}\label{associate-bi} \rm Let $X$ be an isotropic random field of type $s$ on
$SO(3)$. Then its associate positive definite function $\phi$ is
bi-$(-s)$-associated. Actually, assuming for simplicity that $X$ is
centered, as $\phi(g)=\E[X_g\overline{X_e}]$, we have, using
invariance on $k_1$ and type $s$ property on $k_2$,
$$
\phi(k_1gk_2)=\E[X_{k_1gk_2}\overline{X_e}]=\E[X_{gk_2}\overline{X_{k_1^{-1}}}]=
\chi_s(k_1^{-1})\E[X_g\overline{X_e}]\chi_s(k_2^{-1})=
\chi_{-s}(k_1)\phi(g)\chi_{-s}(k_2)\ .
$$
\end{remark}
\qed

Let us investigate the Fourier expansion of a bi-$s$-associated
function $f$: note first that a bi-$s$-associated function is also a
function of type $(-s)$, so that $\widehat f(\ell) \ne 0$
only if $\ell \ge |s|$ as above and all its rows vanish but for
the $s$-th. A repetition of the computation
leading to \paref{leading} gives easily that
$$
\widehat f(\ell)=\chi_{-s}(k)\widehat f(\ell)D^\ell(k)
$$
so that the only non-vanishing entry of the matrix $\widehat f(\ell)$ is
the $(s,s)$-th.

Therefore the Fourier expansion of a bi-$s$-associated function $\phi$ is
\begin{equation}\label{sviluppo per una funzione bi-s-associata}
f= \sum_{\ell \ge |s|} \sqrt{2\ell + 1}\, \alpha_\ell
D^\ell_{s,s}\ ,
\end{equation}
where we have set $\alpha_\ell=\widehat f(\ell)_{s,s}$.

Now let $T$ be an a.s. square integrable random section of the line
bundle $\xi_s$ and $X$ its pullback random field. Recalling that $X$
is a random function of type $s$ and its sample paths are a.s.
square integrable, we easily obtain the stochastic Fourier
expansion of $X$ applying (\ref{sviluppo per una funzione di tipo s})
to the functions $g\mapsto X_g(\omega)$. Actually define, for every $\ell \ge |s|$, the random operator
\begin{equation}
\widehat X(\ell)= \sqrt{2\ell + 1}\int_{SO(3)} X_g D^\ell(g^{-1})\,dg\ .
\end{equation}
The basis of $H_\ell$ being fixed as above and recalling (\ref{sviluppo per una funzione di tipo s}), we obtain, a.s. in $L^2(SO(3))$,
\begin{equation}\label{sviluppo di Fourier per X}
X_g =\sum_{\ell \ge |s|} \sqrt{2\ell + 1}
\sum_{m=-\ell}^{\ell} \widehat X(\ell)_{-s,m} D^\ell_{m,-s}(g)\ .
\end{equation}

If $T$ is isotropic, then by Definition
\ref{isotropia per sezioni aleatorie} its pullback
random field $X$ is also isotropic in the sense of Definition
\ref{invarian}. The following is a consequence of well known general properties of the random
coefficients of invariant random fields (see \cite{trap} Theorem 3.2 or \cite{bib:M} Theorem 2).
\begin{prop}\label{teorema di struttura}
Let $s\in \Z$ and $\xi_s=(\mE_s, \pi_s, \cS^2)$ be the homogeneous
line bundle on $\cS^2$ induced by the $s$-th linear character $\chi_s$ of $SO(2)$.
Let $T$ be a random section
of $\xi_s$ and $X$ its pullback random field. If $T$ is second order and strict-sense isotropic, then the Fourier coefficients $X(\ell)_{-s,m}$ of $X$
in its stochastic expansion \paref{sviluppo di Fourier per X}
are pairwise orthogonal and the variance, $c_\ell$, of $\widehat X(\ell)_{-s,m}$ does not depend on $m$.
Moreover $\E [ \widehat X(\ell)_{-s,m} ]=0$ unless $\ell=0, s=0$.
\end{prop}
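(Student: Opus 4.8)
The plan is to realize every Fourier coefficient $\widehat X(\ell)_{-s,m}$ as a conjugate‑linear functional of the pullback field and then convert the strict‑sense isotropy of $X$ (available by Proposition \ref{pullback-invariant}) into an invariance relation for the covariances, which the Schur orthogonality relations then solve explicitly. Since the fibre is $\C$ we have $X(f)=\int_{SO(3)}X_g\,\overline{f(g)}\,dg$, and from \paref{sviluppo di Fourier per X} together with $D^\ell_{-s,m}(g^{-1})=\overline{D^\ell_{m,-s}(g)}$ one reads off
\[
\widehat X(\ell)_{-s,m}=\sqrt{2\ell+1}\int_{SO(3)}X_g\,\overline{D^\ell_{m,-s}(g)}\,dg=X(\psi_{\ell,m}),\qquad \psi_{\ell,m}:=\sqrt{2\ell+1}\,D^\ell_{m,-s}\ .
\]

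First I would compute the effect of the left regular representation on these test functions. Using $D^\ell(g^{-1}h)=D^\ell(g)^*D^\ell(h)$ gives $L_g\psi_{\ell,m}=\sum_n\overline{D^\ell_{n,m}(g)}\,\psi_{\ell,n}$, and because $f\mapsto X(f)$ is conjugate‑linear this yields $X(L_g\psi_{\ell,m})=\sum_n D^\ell_{n,m}(g)\,\widehat X(\ell)_{-s,n}$. By Definition \ref{invarian} isotropy of $X$ means that the joint law of $(\widehat X(\ell)_{-s,m})_{\ell,m}$ equals that of $(X(L_g\psi_{\ell,m}))_{\ell,m}$ for every $g$; since $T$ (hence $X$) is second order, all first and mixed second moments are finite and invariant. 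Writing $C^{\ell\ell'}_{mm'}:=\E[\widehat X(\ell)_{-s,m}\,\overline{\widehat X(\ell')_{-s,m'}}]$, this invariance reads, for every $g\in SO(3)$,
\[
C^{\ell\ell'}_{mm'}=\sum_{n,n'}D^\ell_{n,m}(g)\,\overline{D^{\ell'}_{n',m'}(g)}\,C^{\ell\ell'}_{nn'}\ .
\]

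The decisive step is then to integrate this identity in $g$ over $SO(3)$ (the left‑hand side being constant) and to apply the Schur orthogonality relations $\int_{SO(3)}D^\ell_{n,m}(g)\,\overline{D^{\ell'}_{n',m'}(g)}\,dg=(2\ell+1)^{-1}\delta_{\ell\ell'}\delta_{nn'}\delta_{mm'}$, which collapse the double sum to
\[
C^{\ell\ell'}_{mm'}=\frac{\delta_{\ell\ell'}\,\delta_{mm'}}{2\ell+1}\sum_n C^{\ell\ell}_{nn}\ .
\]
Hence $C^{\ell\ell'}_{mm'}=0$ whenever $(\ell,m)\neq(\ell',m')$, giving pairwise orthogonality, while $C^{\ell\ell}_{mm}=\frac1{2\ell+1}\sum_n C^{\ell\ell}_{nn}=:c_\ell$ is manifestly independent of $m$. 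For the means, the same argument with a single functional gives $\E[\widehat X(\ell)_{-s,m}]=\sum_n D^\ell_{n,m}(g)\,\E[\widehat X(\ell)_{-s,n}]$; integrating in $g$ and using $\int_{SO(3)}D^\ell_{n,m}(g)\,dg=0$ for $\ell\ge1$ (a nontrivial matrix coefficient is orthogonal to the constant $D^0_{0,0}\equiv1$) forces $\E[\widehat X(\ell)_{-s,m}]=0$ for all $\ell\ge1$; the index $\ell=0$ occurs only when $s=0$, since the expansion runs over $\ell\ge|s|$, which is exactly the stated exception. Finally, as the means vanish except in the case $\ell=0,s=0$, the centred covariances coincide off the diagonal with the $C^{\ell\ell'}_{mm'}$ and on the diagonal with $c_\ell$, so the orthogonality and constant‑variance assertions hold verbatim for the centred quantities.

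I expect the only genuinely delicate point to be the bookkeeping in the first two steps — getting the conjugations and the transposition right in $L_g\psi_{\ell,m}$ and in the passage to second moments — rather than any substantive analytic difficulty: once the invariance relation for $C^{\ell\ell'}_{mm'}$ is established, Schur orthogonality does all the work. (The statement can alternatively be quoted directly from the general theory of invariant random fields in \cite{trap,bib:M}.)
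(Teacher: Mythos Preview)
Your argument is correct. The paper does not actually prove this proposition: it simply states that the result ``is a consequence of well known general properties of the random coefficients of invariant random fields'' and cites \cite{trap} Theorem~3.2 and \cite{bib:M} Theorem~2. You have, in effect, written out the standard Schur--orthogonality proof that underlies those cited results, specialised to the present line-bundle setting; your closing parenthetical remark already acknowledges this. The bookkeeping (conjugations, the action of $L_g$ on the $\psi_{\ell,m}$, passage from laws to second moments) is all handled correctly, and the integration step together with the Schur relations is the right way to finish. So your proof is a genuine, self-contained justification where the paper merely defers to the literature.
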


For the random field $X$ of Proposition \ref{teorema di struttura} we have
immediately
\begin{equation}\label{conv}
\E[|X_g|^2]=\sum_{\ell \ge |s|} (2\ell + 1) c_\ell < +\infty\ .
\end{equation}
The convergence of the series above is also a consequence of
Theorem \ref{gangolli}, as the positive definite function $\phi$ associated to
$X$ is given by
$$
\dlines{
\phi(g)=\E[X_g\overline{X_e}]=\sum_{\ell \ge |s|} (2\ell + 1)c_\ell
\sum_{m=-\ell}^{\ell} D^\ell_{m,-s}(g)\overline{D^\ell_{m,-s}(e)}=\cr
=\sum_{\ell \ge |s|} (2\ell + 1)c_\ell
\sum_{m=-\ell}^{\ell} D^\ell_{m,-s}(g)D^\ell_{-s,m}(e)=
\sum_{\ell \ge |s|} (2\ell + 1)c_\ell D^\ell_{-s,-s}(g)\ . \cr
}
$$
\begin{remark} \rm
Let $X$ be a type $s$ random field on $SO(3)$ with $s\not=0$. Then the
relation $X_{gk}=\chi_s(k^{-1})X_{g}$ implies that $X$ cannot be real
(unless it is vanishing). If in addition it was Gaussian,
then, the identity in law between $X_g$ and $X_{gk}=\chi_s(k^{-1})X_{g}$
would imply that, for every $g\in G$, $X_g$ is a complex Gaussian r.v.
\end{remark}

\section{Construction of Gaussian isotropic spin random fields}

We now give an extension of the construction of \S\ref{sec4}
and prove that every complex Gaussian random section of a homogeneous
line bundle on $\cS^2$ can be obtained in this way, a result much
similar to Theorem \ref{real-general}.
Let $s\in \Z$ and let $\xi_s$ be the homogeneous line bundle associated to
the representation $\chi_s$.

Let $(X_n)_n$ be a sequence of i.i.d. standard Gaussian r.v.'s on some probability space $(\Omega, \F, \mathbb{P})$, and
$\mathscr{H}\subset L^2(\Omega,\F,\P)$ the \emph{complex} Hilbert space
generated by $(X_n)_n$.
Let $(e_n)_n$ be an orthonormal basis of $L^2(SO(3))$ and
define an isometry $S$ between $L^2(SO(3))$ and
$\mathscr{H}$ by
$$
L^2(SO(3))\ni \sum_k \alpha_k e_k\enspace \to\enspace \sum_k \alpha_k X_k \in \mathscr{H}\ .
$$
Let $f\in L^2(SO(3))$, we define a random field $X^f$ on $SO(3)$ by
\begin{align}\label{spin-def}
X^f_g=S(L_gf)\ ,
\end{align}
$L$ denoting as usual the left regular representation.
\begin{prop}
If $f$ is a square integrable bi-$s$-associated function
on $SO(3)$, then $X^f$ defined in \paref{spin-def} is a second order,
square integrable Gaussian isotropic random field of type $s$. Moreover it is
complex Gaussian.
\end{prop}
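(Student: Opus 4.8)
The plan is to verify the stated properties one at a time, transferring each through the isometry $S$ and the unitarity of the left regular representation, in close analogy with the construction of \S\ref{sec4}. First I would dispose of the order and integrability claims. Since $S$ is an isometry and each $L_g$ is unitary on $L^2(SO(3))$, we get $\E[|X^f_g|^2]=\|L_gf\|_2^2=\|f\|_2^2<+\infty$, so $X^f$ is second order with constant variance; integrating in $g$ (Fubini, recalling that the Haar measure has total mass $1$) gives $\E\bigl[\int_{SO(3)}|X^f_g|^2\,dg\bigr]=\|f\|_2^2<+\infty$, so $X^f$ is mean square, and hence a.s., square integrable. Gaussianity follows from the structure of $\mathscr{H}$: every $S(\phi)$ is a complex linear combination (or $L^2$-limit) of the \emph{real} Gaussians $X_n$, so its real and imaginary parts are real linear combinations of the $X_n$; thus the real and imaginary parts of any finite family $X^f_{g_1},\dots,X^f_{g_m}$ all lie in the real Gaussian space spanned by $(X_n)_n$ and are jointly Gaussian, and the field is centered because $\E[X_n]=0$. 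Strong continuity of $g\mapsto L_gf$ provides a jointly measurable version.

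For the type $s$ property I would use that a bi-$s$-associated $f$ is in particular left covariant, $f(kg)=\chi_s(k)f(g)$ (set $k_2=e$ in the defining relation). This gives $L_kf=\chi_{-s}(k)f$, whence $L_{gk}f=L_gL_kf=\chi_s(k^{-1})\,L_gf$, and by complex linearity of $S$ we obtain $X^f_{gk}=\chi_s(k^{-1})X^f_g$, i.e. $X^f$ is of type $s$.

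Isotropy I would obtain exactly as in \S\ref{sec4}: since the field is centered Gaussian, it suffices to match both the covariance and the relation (pseudo-covariance) kernels of $X^f$ with those of the rotated field $h\mapsto X^f_{g_0h}$. The two facts to keep in mind are that $S$ preserves inner products, $\E[S(\phi)\overline{S(\psi)}]=\langle\phi,\psi\rangle$, and that $\overline{S(\psi)}=S(\overline\psi)$ because the $X_n$ are real. Then $\E[X^f_{g_0h}\overline{X^f_{g_0h'}}]=\langle L_{g_0h}f,L_{g_0h'}f\rangle=\langle L_hf,L_{h'}f\rangle$ by unitarity of $L_{g_0}$, and likewise $\E[X^f_{g_0h}X^f_{g_0h'}]=\langle L_{g_0h}f,L_{g_0h'}\overline f\rangle=\langle L_hf,L_{h'}\overline f\rangle$, using that conjugation commutes with $L_{g_0}$. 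Both kernels are thus left invariant, so $X^f$ and its rotate share the same law and $X^f$ is isotropic in the sense of Definition \ref{invarian}.

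Finally, for the complex Gaussian property I would show that the relation kernel vanishes identically. Writing $\E[X^f_gX^f_h]=\langle L_gf,L_h\overline f\rangle=\int_{SO(3)}f(g^{-1}u)f(h^{-1}u)\,du$ and replacing $u$ by $uk$ (Haar invariance), together with the right covariance $f(\cdot\,k)=\chi_s(k)f(\cdot)$, shows that this integral equals $\chi_s(k)^2$ times itself for every $k\in K$; when $s\neq0$ one picks $k$ with $\chi_s(k)^2\neq1$, forcing $\E[X^f_gX^f_h]=0$, and by Fubini also $\E[(\int X^f_g f'(g)\,dg)^2]=0$ for every $f'$. Since a centered complex-valued Gaussian r.v.\ is complex Gaussian precisely when its pseudo-covariance vanishes (see the proof of Proposition \ref{zeta-prop}), $X^f$ is complex Gaussian, in agreement with the Remark preceding the statement; for $s=0$ the construction reduces to the real one of \S\ref{sec4}. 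I expect the main obstacle to be the careful bookkeeping of the two bilinear pairings, the covariance $\E[\,\cdot\,\overline{\cdot}\,]$ versus the pseudo-covariance $\E[\,\cdot\,\cdot\,]$, and in particular the systematic use of $\overline{S(\psi)}=S(\overline\psi)$ to rewrite pseudo-covariances as genuine $L^2(SO(3))$ inner products; once this is in place, unitarity of $L_{g_0}$ and the commutation of conjugation with left translation make both the isotropy and the vanishing of the relation kernel essentially automatic.
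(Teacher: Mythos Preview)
Your argument is correct and follows the same overall strategy as the paper: second order via $\|L_gf\|_2=\|f\|_2$, type $s$ via the left covariance of $f$, isotropy by matching both covariance and pseudo-covariance kernels under left translation, and complex Gaussianity from the vanishing of $\E[X^f_gX^f_h]$. The one substantive difference is in how the pseudo-covariance is shown to vanish. The paper observes that $L_gf$ is of type $-s$ while $\overline{L_{g'}f}$ is bi-$(-s)$-associated, hence of type $s$, and then invokes Proposition~\ref{infinite-linear} (functions of distinct types are orthogonal in the Peter--Weyl decomposition) to conclude $\langle L_gf,\overline{L_{g'}f}\rangle=0$. Your route is more elementary and self-contained: right Haar invariance together with the right covariance $f(\cdot\,k)=\chi_s(k)f(\cdot)$ gives $\int f(g^{-1}u)f(h^{-1}u)\,du=\chi_s(k)^2\int f(g^{-1}u)f(h^{-1}u)\,du$ for all $k\in K$, forcing the integral to vanish when $s\neq0$. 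Both arguments are equally valid; yours avoids any appeal to the Fourier machinery, while the paper's makes explicit which structural fact (orthogonality of types) underlies the vanishing. Your remark that the $s=0$ case falls back on the real construction of \S\ref{sec4} is appropriate, since neither argument produces complex Gaussianity there, and indeed the paper's own orthogonality argument is vacuous when $s=-s$.
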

\begin{proof}
It is immediate that $X^f$ is second order as
$\E[|X^f_g|^2]=\Vert L_gf\Vert^2_2=\Vert f\Vert^2_2$. It
 is of type $s$ as for every $g\in SO(3)$ and $k\in K$,
$$
X^f_{gk}=S(L_{gk}f)=\chi_s(k^{-1})S(L_gf)=\chi_s(k^{-1})X^f_g\ .
$$
Let us prove strict-sense invariance. Actually, $S$ being an isometry,
for every $h\in SO(3)$
$$
\dlines{
\E[ X^f_{hg}\overline{X^f_{hg'}}] = \E[ S(L_{hg}f)\overline{S(L_{hg'}f)}]
= \langle L_{hg}f, L_{hg'}f \rangle_2 =
\langle L_{g}f, L_{g'}f \rangle_2
= \E[X^f_{g}\overline{X^f_{g'}}]\ .
}$$
Therefore the random fields $X^f$ and its rotated $(X^f)^h$
have the same covariance
kernel. Let us prove that they also have the same relation function.
Actually we have, for every $g,g'\in SO(3)$,
\begin{equation}\label{complex-spin1}
\E[X^f_{g}X^f_{g'}]=\E[ S(L_{hg}f){S(L_{hg'}f)}]
= \langle L_{hg}f, \overline {L_{hg'}f} \rangle_2 =\langle L_{g}f, \overline{L_{g'}f} \rangle_2=0
\end{equation}
as the function $\overline{L_{g'}f}$ is bi-$(-s)$-associated and therefore of
type $s$ and orthogonal to $L_{g}f$ which is of type $-s$ (orthogonality of functions
of type $s$ and $-s$ is a consequence of Proposition \ref{infinite-linear}).

In order to prove that $X^f$ is complex Gaussian we must show that
for every $\psi\in L^2(SO(3))$, the r.v.
$$
Z=\int_{SO(3)}X^f_g\psi(g)\, dg
$$
is complex Gaussian. As $Z$ is Gaussian by construction we must just prove that
$\E[Z^2]=0$. But as, thanks to \paref{complex-spin1}, $\E[X^f_{g}X^f_{g'}]=0$
$$
\E[Z^2]=\E\Bigl[\int_{SO(3)}\int_{SO(3)}X^f_gX^f_h\psi(g)\psi(h)\,dg dh\Bigr]=
\int_{SO(3)}\int_{SO(3)}\E[X^f_gX^f_h]\psi(g)\psi(h)\,dg dh=0\ .
$$
\end{proof}

Let us investigate the stochastic Fourier expansion of
$X^f$.
Let us consider first the random field $X^\ell$ associated to
$f=D^\ell_{s,s}$. Recall first that the r.v. $Z=S(D^\ell_{s,s})$
has variance $\E[|Z|^2]=\Vert D^\ell_{s,s}\Vert_2^2=(2\ell+1)^{-1}$ and that
$\overline{D^\ell_{m,s}}=(-1)^{m-s}D^\ell_{-m,-s}$. Therefore
$$
\dlines{
X^\ell_g =S(L_g D^\ell_{s,s})=
\sum_{m=-\ell}^{\ell} S(D^\ell_{m,s}) D^\ell_{s,m}(g^{-1})=\cr
=\sum_{m=-\ell}^{\ell} S(D^\ell_{m,s})
\overline{D^\ell_{m,s}(g)}=
\sum_{m=-\ell}^{\ell} S(D^\ell_{m,s}) (-1)^{m-s}D^\ell_{-m,-s}(g)\ .\cr
}
$$
Therefore the r.v.'s
$$
a_{\ell,m}=\sqrt{2\ell+1}\, S(D^\ell_{m,s})(-1)^{m-s}
$$
are complex Gaussian, independent and with variance $\E[|a_{\ell,m}|^2]=1$ and
we have the expansion
\begin{equation}
X^\ell_g=
\frac{1}{\sqrt{2\ell+1}}\sum_{m=-\ell}^{\ell} a_{\ell, m} D^\ell_{m,-s}(g)\ .
\end{equation}
Note that the coefficients
$a_{\ell m}$ are independent complex Gaussian  r.v.'s.
This is a difference with respect to the case $s=0$,
where in the case of a real random field, the coefficients $a_{\ell,m}$ and $a_{\ell,-m}$ were not independent. Recall that random fields of type $s\ne 0$ on $SO(3)$ cannot be real.

In general, for a square integrable bi-$s$-associated function $f$
\begin{equation}\label{fs}
f=\sum_{\ell \ge |s|} \sqrt{2\ell + 1}\, \alpha_\ell D^\ell_{s,s}
\end{equation}
with
$$
\Vert f\Vert_2^2=\sum_{\ell \ge |s|} |\alpha_\ell|^2 < +\infty\ ,
$$
the Gaussian random field $X^f$ has the expansion
\begin{align}\label{sviluppo per X}
X^f_g&=\sum_{\ell \ge |s|} \alpha_\ell
\sum_{m=\ell}^{\ell} a_{\ell, m} D^\ell_{m,-s}(g)\ ,
\end{align}
where $(a_{\ell,m} )_{\ell,m}$ are independent  complex Gaussian
r.v.'s with zero mean and unit variance.

The associated positive definite function of $X^f$,
$\phi^f(g):=\E[X^f_g \overline{X^f_e}]$ is bi-$(-s)$-associated (Remark
\ref{associate-bi}) and
continuous (Theorem \ref{gangolli}) and, by \paref{convolution for phi},
is related to $f$ by
$$
\phi^f=f \ast \breve f (g^{-1})\ .
$$
This allows to derive its Fourier expansion:
$$
\dlines{
\phi^f(g)=f \ast \breve f (g^{-1})=
\int_{SO(3)} f(h) \overline{f(g h)}\,dh
=\sum_{\ell, \ell' \ge |s|} \sqrt{2\ell + 1}\,\sqrt{2\ell'+1}\, \alpha_\ell \overline{\alpha_{\ell'}}
\int_{SO(3)} D^\ell_{s,s}(h) \overline{D^{\ell'}_{s,s}(g h)}\,dh=\cr
=\sum_{\ell, \ell' \ge |s|} \sqrt{2\ell + 1}\,\sqrt{2\ell'+1}\, \alpha_\ell \overline{\alpha_{\ell'}}
\sum_{j=-\ell}^{\ell} \underbrace{\Bigl (\int_{SO(3)} D^\ell_{s,s}(h)
\overline{D^{\ell'}_{j,s}(h)}\,dh\, \Bigr)}_{= \frac 1 {2\ell +1}
\delta_{\ell,\ell'} \delta_{s,j}} \overline{D^{\ell}_{s,j}(g)}=\cr
=\sum_{\ell \ge |s|} |\alpha_\ell |^2 D^\ell_{-s,-s}(g)\ .
}
$$
Note that in accordance with Theorem \ref{gangolli}, as
$|D^\ell_{-s,-s}(g)|\le D^\ell_{-s,-s}(e)=1$, the above series converges uniformly.

Conversely, it is immediate that, given a continuous positive definite
bi-$(-s)$-associated function $\phi$, whose expansion is
$$
\phi^f(g)=\sum_{\ell \ge |s|} |\alpha_\ell |^2 D^\ell_{-s,-s}(g)\ ,
$$
by choosing
$$
f(g)=\sum_{\ell \ge |s|} \sqrt{2\ell+1}\,\beta_\ell D^\ell_{-s,-s}(g)
$$
with $|\beta_\ell|=\sqrt{\alpha_\ell}$,
there exist a square integrable bi-$s$-associated function $f$ as in \paref{fs}
such that $\phi(g)=f*\breve f(g^{-1})$. Therefore, for every random field
$X$ of type
$s$ on $SO(3)$ there exists a square integrable bi-$s$-associated
function $f$ such that $X$ and $X^f$ coincide in law. Such a function $f$ is not
unique.

From $X^f$ we can define a random section $T^f$ of the homogeneous
line bundle $\xi_s$ by
\begin{equation}
T^f_{x} := \theta(g, X^f_g)\ ,
\end{equation}
where $x=gK\in \cS^2$.
Now, as for the case $s=0$ that was treated in \S\ref{sec4}, it is natural to ask whether every Gaussian isotropic section of $\xi_s$ can be obtained in this way.
\begin{theorem}
Let $s\in \Z\setminus \{0\}$.
For every square integrable, isotropic, (complex) Gaussian
random section  $T$ of the homogeneous  $s$-spin line bundle $\xi_s$, there exists a square integrable and bi-$s$-associated function $f$ on $SO(3)$
such that
\begin{equation}\label{=}
T^f\enspace \mathop{=}^{law}\enspace T\ .
\end{equation}
Such a function $f$ is not unique.
\end{theorem}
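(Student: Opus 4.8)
The plan is to transfer the whole problem to the pullback random field on $SO(3)$ and then read off the conclusion from the covariance computation already performed just before the statement. First I would let $X$ be the pullback random field of $T$. By Proposition \ref{prop-pull1} the field $X$ is a.s. square integrable and second order, by Proposition \ref{pullback-invariant} it is isotropic, and it inherits the complex Gaussian character from $T$; moreover it is of type $s$. Because $s\ne 0$ I would first check that $X$ is centered: isotropy forces $g\mapsto\E[X_g]$ to be constant, while the type $s$ relation $X_{gk}=\chi_s(k^{-1})X_g$ gives $\E[X_g]=\chi_s(k^{-1})\E[X_g]$ for all $k\in K$, so $\E[X_g]\equiv 0$ (this is also the last assertion of Proposition \ref{teorema di struttura}).

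Next I would use the complex Gaussian structure to reduce the comparison of laws to the comparison of a single kernel. Recall from \S4 that the law of a centered complex Gaussian vector is completely determined by its covariance matrix and its relation (pseudocovariance) matrix. Since $X$ is complex Gaussian, its relation function $\E[X_gX_h]$ vanishes identically, by the same argument as in Proposition \ref{zeta-prop}; hence the law of $X$ is entirely determined by its associated positive definite function $\phi(g)=\E[X_g\overline{X_e}]$. By Remark \ref{associate-bi} this $\phi$ is bi-$(-s)$-associated, it is continuous by Theorem \ref{gangolli}, and Proposition \ref{teorema di struttura} gives its Fourier expansion $\phi(g)=\sum_{\ell\ge|s|}(2\ell+1)c_\ell D^\ell_{-s,-s}(g)$ with $c_\ell\ge 0$ and $\sum_{\ell\ge|s|}(2\ell+1)c_\ell=\E[|X_e|^2]<+\infty$ by \paref{conv}.

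Then I would invoke the converse construction carried out immediately before the theorem: setting $f=\sum_{\ell\ge|s|}\sqrt{2\ell+1}\,\beta_\ell D^\ell_{s,s}$ with $|\beta_\ell|^2=(2\ell+1)c_\ell$ defines a bi-$s$-associated function in $L^2(SO(3))$, since $\|f\|_2^2=\sum_{\ell\ge|s|}|\beta_\ell|^2<+\infty$, and the computation preceding the theorem shows that the positive definite function of $X^f$ is $\phi^f(g)=\sum_{\ell\ge|s|}|\beta_\ell|^2 D^\ell_{-s,-s}(g)=\phi(g)$ (the entries $D^\ell_{-s,-s}$ being linearly independent, matching the expansions is legitimate). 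By the construction proposition $X^f$ is centered complex Gaussian with vanishing relation function, so $X^f$ and $X$ have the same covariance function $\phi$ and the same (vanishing) relation function; hence every finite collection of functionals $\bigl(X^f(\psi_i)\bigr)_i$ and $\bigl(X(\psi_i)\bigr)_i$ has the same complex Gaussian joint law, i.e. $X^f\mathop{=}^{law}X$. Finally, since $T^f_x=\theta(g,X^f_g)$ is precisely the random section whose pullback field is $X^f$, and the correspondence between random sections of $\xi_s$ and their pullback fields is a law preserving bijection (Proposition \ref{pullback-s-deterministic} and the discussion after Proposition \ref{action-pullback}), I conclude $T^f\mathop{=}^{law}T$. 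Non-uniqueness is then immediate: the phases of the $\beta_\ell$ are arbitrary, and any choice yields the same $\phi^f$ and hence the same law for $X^f$ and for $T^f$.

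I expect the only genuinely delicate point to be the law-determination step, where one must be sure that matching both the covariance \emph{and} the relation kernels of two complex Gaussian fields really forces equality of all the joint laws of the functionals $X(\psi)$ (equivalently of the $T(u)$). It is exactly the hypothesis $s\ne0$ — which both forces centering and makes the relation function vanish automatically — that makes the complex statement work, in contrast to the real case $s=0$ of Theorem \ref{real-general}, where the analogous construction reaches only real fields.
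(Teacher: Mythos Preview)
Your proposal is correct and follows essentially the same route as the paper: pass to the pullback field $X$, identify its bi-$(-s)$-associated positive definite function $\phi$, expand it in the $D^\ell_{-s,-s}$, and take $f$ with coefficients given by square roots. The paper's own proof is terser and leaves implicit exactly the points you single out as delicate (centering when $s\neq 0$, and that for a centered complex Gaussian field the vanishing relation function means the covariance alone determines the law), so your write-up is if anything more complete on those issues.
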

\begin{proof}
Let $X$ be the pullback random field (of type $s$) of $T$. $X$ is of course
mean square continuous. Let $R$ be its covariance kernel. The function
$\phi(g):=R(g,e)$  is  continuous,
positive definite and bi-$(-s)$-associated, therefore has the expansion
\begin{equation}\label{sviluppo per fi 2}
\phi=\sum_{\ell \ge |s|} \sqrt{2\ell + 1}\,\beta_\ell D^\ell_{-s,-s}\ ,
\end{equation}
where
$\beta_\ell=\sqrt{2\ell + 1}\,\int_{SO(3)} \phi(g)
\overline{D^\ell_{-s,-s}(g)}\,dg \ge 0$. Furthermore,
by Theorem \ref{gangolli}, the series
in (\ref{sviluppo per fi 2}) converges uniformly, i.e.
$$
\sum_{\ell \ge |s|} \sqrt{2\ell + 1}\,\beta_\ell < +\infty\ .
$$
Now set $f:=\sum_{\ell \ge |s|} (2\ell +1)\sqrt{\beta_\ell} D^\ell_{s,s}$.
Actually, $f\in L^2_s(SO(3))$ as $\| f\|^2_{L^2(SO(3))} = \sum_{\ell \ge |s|} (2\ell +1)\beta_\ell < +\infty$ so that it is bi-$s$-associated.

Note that every function $f$ of the form
$f=\sum_{\ell \ge |s|} (2\ell +1)\alpha_\ell D^\ell_{s,s}$ where
$\alpha_\ell$ is such that $\alpha_\ell^2=\beta_\ell$ satisfies
(\ref{=}) (and clearly every function $f$ such that $\phi(g)=f*\breve f(g^{-1})$ is of this form).

\end{proof}
\section{The connection with classical spin theory}
There are different approaches to the theory of random sections of homogeneous line bundles
 on $\cS^2$ (see \cite{marinuccigeller}, \cite{bib:LS}, \cite{bib:M}, \cite{bib:NP}  e.g.). In this section
we compare them, taking into account, besides the one outlined
 in \S 6,  the classical Newman and Penrose  spin theory (\cite{bib:NP})
later formulated in a more mathematical framework by Geller and Marinucci
(\cite{marinuccigeller}).

Let us first recall some basic notions about vector bundles. From now on $s\in \Z$. We shall state them concerning
the complex line bundle $\xi_s=(\mE_s, \pi_s, \cS^2)$ even if they can be immediately extended
to more general situations. An atlas of $\xi_s$ (see \cite{Husemoller} e.g.) can be defined as follows.
Let $U\subset \cS^2$ be an open set and $\Psi$ a diffeomorphism between $U$ and an open set of $\R^2$. A chart $\Phi$ of $\xi_s$ over $U$ is an isomorphism
\begin{equation}\label{def chart}
\Phi: \pi^{-1}_s(U)\goto \Psi(U)\times \C\ ,
\end{equation}
whose restriction to every fiber $\pi_s^{-1}(x)$ is a linear isomorphism $\leftrightarrow\C$. An atlas of $\xi_s$ is a family $( U_j, \Phi_j)_{j\in J}$ such that $\Phi_j$ is a chart of $\xi_s$ over $U_j$
and the family $(U_j)_{j\in J}$ covers $\cS^2$.

Given an atlas $( U_j, \Phi_j)_{j\in J}$, For each pair $i,j\in J$ there exists a unique map
(see \cite{Husemoller} Prop. 2.2) $\lambda_{i,j}: U_i\cap U_j \goto \C\setminus 0$
such that for $x\in U_i\cap U_j, z\in \C$,
\begin{equation}
\Phi_i^{-1}(\Psi_i(x),z)=\Phi_j^{-1}(\Psi_j(x),\lambda_{i,j}(x)z)\ .
\end{equation}
The map $\lambda_{i,j}$ is called the \emph{transition function} from the chart
$(U_j,\Phi_j)$ to the chart $(U_i,\Phi_i)$.
Transition functions satisfy the cocycle conditions, i.e.
for every $i,j,l\in J$
\begin{equation}
\begin{array}{l}\label{cociclo}
\nonumber
\lambda_{j,j} = 1\ \ \ \  \qquad  \text{on}\ \ \ U_j\ ,\cr
\lambda_{j,i} = \lambda_{i,j}^{-1}\ \ \ \   \quad \text{on}\ \  \ U_i\cap U_j\ ,\cr
\nonumber
\lambda_{l,i}\lambda_{i,j}=\lambda_{l,j}\ \ \  \text{on}\ \  \ U_i\cap U_j\cap U_l\ .
\end{array}
\end{equation}
Recall that we denote $K\cong SO(2)$ the isotropy group of the north pole as in \S 6, \S7,  so that $\cS^2\cong SO(3)/K$.
We show now that an atlas of the line bundle $\xi_s$ is given as soon as we specify

a) an atlas $(U_j,\Psi_j)_{j\in J}$ of the manifold $\cS^2$,

b) for every $j\in J$ a family $(g_x^j)_{x\in U_j}$ of representative
elements $g_x^j\in G$ with $g_x^jK=x$.

\noindent More precisely,  let $(g_x^j)_{x\in U_j}$ be as in b) such that $x\mapsto g^j_x$ is smooth for each $j\in J$.
Let $\eta \in  \pi^{-1}_s(U_j)\subset \mE_s$ and $x:=\pi_s(\eta)\in U_j$,
therefore $\eta=\theta(g^j_x,z)$, for a unique $z\in \C$.
Define the chart $\Phi_j$ of $\xi_s$ over $U_j$ as
\begin{equation}\label{triv}
\Phi_j(\eta)=  (\Psi_j(x), z)\ .
\end{equation}
Transition functions of this atlas are easily determined.
If $\eta \in \xi_s$ is such that $x=\pi_s(\eta)\in U_i\cap U_j$,
then $\Phi_j(\eta)=(\Psi_j(x), z_j)$, $\Phi_i(\eta)=(\Psi_i(x), z_i)$.
As $g_x^iK=g^j_xK$, there exists a unique $k=k_{i,j}(x)\in K$ such that
 $g^j_x=g^i_xk$, so that
$\eta=\theta(g^i_x,z_i)=\theta(g^j_x, z_j)=\theta(g^i_xk, z_j)=\theta(g^i_x, \chi_s(k)z_j)$ which implies $z_i=\chi_s(k)z_j$.
Therefore
\begin{equation}\label{transizione}
\lambda_{i,j}(x)=\chi_s(k)\ .
\end{equation}
\bigskip

\noindent The spin $s$ concept was introduced by Newman and Penrose in \cite{bib:NP}:
\emph{a quantity $u$ defined on $\cS^2$ has spin weight $s$  if, whenever a tangent vector $\rho$
 at any point $x$ on the sphere transforms under coordinate change  by
$\rho'=e^{i \psi} \rho$, then the quantity at this point $x$ transforms
by  $u'=e^{is\psi} u$}. Recently, Geller and Marinucci in \cite{marinuccigeller}
have put this notion in a more mathematical framework modeling such a $u$
as a section of a complex line bundle on $\cS^2$ and they describe this line bundle by giving charts and fixing transition functions to express the transformation laws under
changes of coordinates.

More precisely,
they  define an atlas of $\cS^2$ as follows. They consider the open
covering $(U_R)_{R\in SO(3)}$ of $\cS^2$  given by
\begin{equation}\label{charts}
U_e := \cS^2 \setminus \lbrace x_0, x_1 \rbrace \qquad\text{and}\qquad U_R:=R U_e\ ,
\end{equation}
where $x_0=$the north pole (as usual), $x_1=$the south pole. On $U_e$ they consider the usual spherical coordinates $(\vartheta, \varphi)$, $\vartheta=$colatitude, $\varphi=$longitude
 and on any  $U_R$ the ``rotated'' coordinates $(\vartheta_R, \varphi_R)$  in such a way that $x$ in $U_e$ and $Rx$ in $U_R$ have the same coordinates.

The transition functions are defined as follows.
For each $x\in U_R$, let $\rho_R(x)$ denote the unit  tangent vector at $x$, tangent to the circle $\vartheta_R = const$ and
pointing to the direction of increasing $\varphi_R$. If
$x\in U_{R_1}\cap U_{R_2}$,  let $\psi_{R_2,R_1}(x)$ denote the (oriented) angle from
$\rho_{R_1}(x)$ to $\rho_{R_2}(x)$.
They prove that the quantity
\begin{equation}\label{triv-mg}
e^{is\psi_{R_2,R_1}(x)}
\end{equation}
satisfies the cocycle relations \paref{cociclo} so that this defines a unique (up to isomorphism)
structure of complex line bundle on $\cS^2$ having \paref{triv-mg} as transition
functions at $x$  (see \cite{Husemoller} Th. 3.2).

We shall prove that
this spin $s$ line bundle is the same as the
homogeneous line bundle $\xi_{-s}=(\mE_{-s}, \pi_{-s}, \cS^2)$.
To this aim we have just to check that, for a suitable choice of the atlas
$(U_R, \Phi_R)_{R\in SO(3)}$ of $\xi_{-s}$ of the type described in a), b) above,
the transition functions \paref{transizione}  and \paref{triv-mg} are the same.
Essentially we have to determine the family  $(g^R_x)_{R\in SO(3), x\in U_R}$ as in b).

Recall first that every rotation $R\in SO(3)$ can be realized as a composition of three rotations:
(i) a rotation by an angle $\gamma_R$ around the z axis, (ii) a rotation by an angle
$\beta_R$ around the y axis and (iii) a rotation by an angle $\alpha_R$
around the z axis (the so called z-y-z convention), ($\alpha_R$, $\beta_R$, $\gamma_R$) are the {\it Euler angles} of $R$.
Therefore the rotation $R$ acts on the north pole $x_0$
of $\cS^2$ as mapping $x_0$ to the new location on $\cS^2$
whose spherical coordinates are $(\beta_R, \alpha_R)$ after rotating  the tangent plane at $x_0$
by an angle $\gamma_R$. In each coset $\cS^2\ni x=gK$ let us choose the element $g_x\in SO(3)$ as the rotation such that $g_xx_0=x$ and having its third Euler angle $\gamma_{g_x}$ equal to $0$. Of course
if $x\ne x_0,x_1$, such $g_x$ is unique.

Consider the atlas $(U_R, \Psi_R)_{R\in SO(3)}$ of $\cS^2$ defined as follows.
Set the charts as
\begin{align}
&\Psi_e(x) := (\beta_{g_x}, \alpha_{g_x})\ , \qquad x\in U_e\ ,\\
&\Psi_R(x) := \Psi_e (R^{-1}x)\ , \qquad x\in U_R\ .
\end{align}
Note that for each $R$, $\Psi_R(x)$ coincides with the ``rotated'' coordinates $(\vartheta_R, \varphi_R)$ of $x$.
Let us choose now the family $(g^R_x)_{x\in U_R, R\in SO(3)}$.
For $x\in U_e$ choose $g^e_x:=g_x$
 and for $x\in U_R$
\begin{equation}
g^R_x:=Rg_{R^{-1}x}\ .
\end{equation}
Therefore the corresponding atlas
 $( U_R, \Phi_R)_{R\in SO(3)}$ of $\xi_s$ is given, for $\eta\in \pi^{-1}_s(U_R)$, by
 \begin{equation}
 \Phi_R(\eta)=( \Psi_R(x), z)\ ,
 \end{equation}
 where $x:=\pi_s(\eta)\in U_R$ and $z$ is such that $\eta=\theta(g_x^R,z)$.
Moreover for $R_1, R_2\in SO(3)$, $x\in U_{R_1}\cap U_{R_2}$ we have
\begin{equation}\label{eq k}
k_{R_2,R_1}(x)= (g_{R_2^{-1}x})^{-1} R_2^{-1}R_1 g_{R_1^{-1}x}
\end{equation}
and the transition function
from the chart $(U_{R_1}, \Phi_{R_1})$ to the chart $(U_{R_2}, \Phi_{R_2})$ at $x$ is given by \paref{transizione}
\begin{equation}\label{fnz transizione}
\lambda^{(-s)}_{R_2, R_1}(x):=\chi_s(k)\ .
\end{equation}
From now on let us denote $\omega_{R_2,R_1}(x)$ the rotation angle of $k_{R_2,R_1}(x)$.
Note that, with this choice of the family $(g^R_x)_{x\in U_R, R\in SO(3)}$,
 $\omega_{R_2,R_1}(x)$ is the third Euler angle of the rotation $R_2^{-1}R_1g_{R_1^{-1}x}$.

\begin{remark}\label{particular} \rm Note that we have
$$
R^{-1}g_x=g_{R^{-1}x}\ ,
$$
i.e. $g^R_x=g_x$, in any of the following two situations

a) $R$ is a rotation around the north-south axis (i.e. not changing the latitude of the points of $\cS^2$).

b) The rotation axis of $R$ is orthogonal to the plane $[x_0,x]$ (i.e. changes the colatitude of $x$ leaving its longitude unchanged).

Note that if each of the rotations $R_1,R_2$ are of type a) or of type b), then
$$
k_{R_2,R_1}(x)=g^{-1}_{R_2^{-1}x}R_2^{-1}R_1g_{R_1^{-1}x}=(R_2g_{R_2^{-1}x})^{-1}R_1g_{R_1^{-1}x}=
g_x^{-1}g_x= \mbox{the identity}
$$
and in this case the rotation angle of $k_{R_2,R_1}(x)$ coincides with the angle $-\psi_{R_2,R_1}(x)$, as neither $R_1$ nor $R_2$ change the orientation of the tangent plane at $x$.

Another situation in which the rotation $k$ can be easily computed appears when $R_1$ is the
identity and $R_2$ is a rotation of an angle $\gamma$ around an axis passing through $x$.
Actually
\begin{equation}\label{rot}
k_{R_2,e}(x)=g_x^{-1}R_2^{-1}g_x
\end{equation}
which, by conjugation, turns out to be a rotation of the angle $-\gamma$ around the north-south axis. In this case also it is immediate that the rotation angle $\omega_{R_2,R_1}(x)$ coincides with $-\psi_{R_2,R_1}(x)$.

\qed
\end{remark}
The following relations will be useful in the sequel, setting $y_1=R_1^{-1}x$, $y_2=R_2^{-1}x$,
\begin{align}
k_{R_2,R_1}(x)&=g^{-1}_{R_2^{-1}x}R_2^{-1}R_1g_{R_1^{-1}x}=g^{-1}_{R_2^{-1}R_1y_1}R_2^{-1}R_1g_{y_1}=
k_{R_1^{-1}R_2,e}(R_1^{-1}x)\label{al1}\ ,\cr
k_{R_2,R_1}(x)&=g^{-1}_{R_2^{-1}x}R_2^{-1}R_1g_{R_1^{-1}x}=g^{-1}_{y_2}R_2^{-1}R_1g_{R_1^{-1}R_2y_2}=
k_{e,R_2^{-1}R_1}(R_2^{-1}x)\ .\
\end{align}
We have already shown in Remark \ref{particular} that  $\omega_{R_2,R_1}(x)=-\psi_{R_2,R_1}(x)$ in two particular situations:
rotations that move $y_1=R_1^{-1}x$ to $y_2=R_2^{-1}x$
without turning the tangent plane and rotations that turn the tangent plane without moving the point.
In the next statement, by combining these two particular cases,
we prove that actually they coincide always.
\begin{lemma}\label{lemma angolo}
Let $x\in U_{R_1}\cap U_{R_2}$, then
$\omega_{R_2,R_1}(x)=-\psi_{R_2,R_1}(x)$\ .
\end{lemma}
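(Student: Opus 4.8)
The plan is to show that both $\omega_{R_2,R_1}$ and $-\psi_{R_2,R_1}$ are \emph{additive cocycles} in the pair $(R_1,R_2)$ and transform \emph{equivariantly} under $SO(3)$, so that the whole identity collapses onto the two elementary situations already settled in Remark \ref{particular}. Concretely I would set $c_{R_2,R_1}(x):=\omega_{R_2,R_1}(x)+\psi_{R_2,R_1}(x)$, and the goal becomes $c\equiv 0$.

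For the cocycle property, note that a direct cancellation in the definition of $k$ gives $k_{R_3,R_2}(x)\,k_{R_2,R_1}(x)=k_{R_3,R_1}(x)$, and since $K$ is abelian the rotation angles add, so $\omega_{R_3,R_1}=\omega_{R_3,R_2}+\omega_{R_2,R_1}$; correspondingly the oriented angles among the three tangent directions $\rho_{R_1}(x),\rho_{R_2}(x),\rho_{R_3}(x)$ in the $2$-dimensional tangent plane at $x$ add up, giving the same relation for $\psi$, hence for $c$. For equivariance, relation \paref{al1} already yields $\omega_{R_2,R_1}(x)=\omega_{R_1^{-1}R_2,e}(R_1^{-1}x)$; for $\psi$ I would use that the rotated coordinates satisfy $\Psi_R=\Psi_e\circ R^{-1}$, so $\rho_R(x)=R_*\,\rho_e(R^{-1}x)$, and applying the isometry $R_1^{-1}$ to $\rho_{R_1}(x)$ and $\rho_{R_2}(x)$ turns them into $\rho_e(R_1^{-1}x)$ and $\rho_{R_1^{-1}R_2}(R_1^{-1}x)$, whence $\psi_{R_2,R_1}(x)=\psi_{R_1^{-1}R_2,e}(R_1^{-1}x)$. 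Thus $c_{R_2,R_1}(x)=c_{R_1^{-1}R_2,e}(R_1^{-1}x)$, and it suffices to prove $c_{R,e}(y)=0$ for every $R$ and every $y\in U_e\cap U_R$.

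For this base case, writing $y'=R^{-1}y$, I would construct a rotation $R_0$ carrying $y'$ to $y$ as the composition of a rotation about the north--south axis (type a), matching the longitude of $y'$ to that of $y$, followed by a rotation of type b) in that meridian plane, matching the colatitude; both factors preserve the longitude frame $\rho$. By Remark \ref{particular} each factor $S$ satisfies $S^{-1}g_z=g_{S^{-1}z}$, and this relation propagates through the composition, so $R_0^{-1}g_y=g_{y'}$, giving $k_{R_0,e}(y)=\text{identity}$ and $\psi_{R_0,e}(y)=0$, i.e. $c_{R_0,e}(y)=0$. Using additivity and then equivariance, $c_{R,e}(y)=c_{R,R_0}(y)+c_{R_0,e}(y)=c_{R_0^{-1}R,e}(y')$. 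Finally $R_0^{-1}R$ fixes $y'$, since $R_0^{-1}Ry'=R_0^{-1}y=y'$, hence is a rotation about the axis through $y'$, for which \paref{rot} yields $\omega_{R_0^{-1}R,e}(y')=-\psi_{R_0^{-1}R,e}(y')$, that is $c_{R_0^{-1}R,e}(y')=0$. Therefore $c_{R,e}(y)=0$ and the lemma follows.

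I expect the main obstacle to lie in Step~3: one must verify both that the composition of a type a) and a type b) rotation genuinely carries $y'$ to $y$ while preserving the frame $\rho$, and that the selection relation $S^{-1}g_z=g_{S^{-1}z}$ survives the composition so that indeed $\omega_{R_0,e}(y)=0$. Keeping the orientation convention for $\psi$ consistent across the reduction (so that the cocycle and equivariance signs match those for $\omega$) is the other delicate point.
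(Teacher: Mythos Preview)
Your argument is correct and follows essentially the same route as the paper: reduce to the case $R_1=e$ via the relation \paref{al1}, then factor the relevant rotation into a ``transport'' part built from the type a) and type b) rotations of Remark \ref{particular} and a ``stabilizer'' part fixing the base point, and invoke Remark \ref{particular} for each factor. The only difference is in packaging: you make the cocycle identity $k_{R_3,R_2}k_{R_2,R_1}=k_{R_3,R_1}$ and the equivariance $\psi_{R_2,R_1}(x)=\psi_{R_1^{-1}R_2,e}(R_1^{-1}x)$ explicit and work with the combined quantity $c=\omega+\psi$, whereas the paper carries out the same reduction by a direct computation, writing $R_2^{-1}R_1=EW$ (with $W$ the transport to $y_2$ and $E$ a rotation about $y_2$) and then verifying $\psi_{R_2,R_1}(x)=\psi_{E^{-1},e}(y_2)$ by an inner-product calculation with the explicit vectors $\rho_R(x)=R\rho(R^{-1}x)$. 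Your concern about the selection relation surviving composition is harmless: for $R_0=S_bS_a$ with $S_a$ of type a) and $S_b$ of type b) relative to the common meridian, one has $S_b^{-1}g_y=g_{y''}$ and $S_a^{-1}g_{y''}=g_{y'}$, so $R_0^{-1}g_y=g_{y'}$ as needed; the frame preservation $R_0\rho(y')=\rho(y)$ is likewise immediate once one notes that a rotation about the $y$-axis fixes $\rho$ along the zero-longitude meridian.
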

\begin{proof}
The matrix $R_2^{-1}R_1$ can be decomposed as $R_2^{-1}R_1=EW$ where $W$ is the product of a rotation around an axis that is orthogonal to the plane $[x_0,y_1]$ bringing $y_1$ to a point having the same colatitude as $y_2$ and of a rotation around the north-south axis taking this point to $y_2$. By Remark \ref{particular} we have
$Wg_{y_1}=g_{Wy_1}=g_{y_2}$. $E$ instead is a rotation around an axis passing by $y_2$ itself.

We have then, thanks to \paref{rot} and \paref{al1}
$$
\dlines{
k_{R_2,R_1}(x)=k_{R_1^{-1}R_2,e}(R_1^{-1}x)=k_{W^{-1}E^{-1},e}(y_1)=
g_{EWy_1}^{-1}EWg_{y_1}
=g_{y_2}^{-1}Eg_{y_2}=k_{E^{-1},e}(y_2)\ .\cr
}
$$
By the previous discussion, $\omega_{E^{-1},e}(y_2)=-\psi_{E^{-1},e}(y_2)$.
To finish the proof it is enough to show that
\begin{equation}\label{ss}
\psi_{R_2,R_1}(x)=\psi_{E^{-1},e}(y_2)\ .
\end{equation}
Let us denote $\rho(x)=\rho_e(x)$ the tangent vector at $x$ which is parallel to the curve $\vartheta=const$ and pointing in the direction of increasing $\varphi$. Then in coordinates
$$
\rho(x)=\frac 1{\sqrt{x_1^2+x_2^2}}\ \bigl(-x_2,x_1,0\bigr)
$$
and the action of $R$ is given by (\cite{marinuccigeller},\S3) $\rho_R(x)=R\rho(R^{-1}x)$. As $W\rho(y_1)=\rho(y_2)$ ($W$ does not change the orientation of the tangent plane),
$$
\dlines{
\langle \rho_{R_2}(x), \rho_{R_1}(x) \rangle =
\langle R_2 \rho(R_2^{-1}x), R_1 \rho(R_1^{-1}x) \rangle
= \langle R_1^{-1}R_2 \rho(R_2^{-1}x),\rho(R_1^{-1}x) \rangle=\cr
=\langle W^{-1}E^{-1} \rho(EWR_1^{-1}x), \rho(W^{-1}E^{-1}R_2^{-1}x) \rangle=
\langle E^{-1} \rho(Ey_2), W\rho(W^{-1}y_2) \rangle =\cr
=\langle E^{-1} \rho(y_2)),  W\rho(y_1) \rangle=\langle E^{-1} \rho(y_2)), \rho(y_2) \rangle\ ,
}
$$
so that the oriented angle $\psi_{R_2,R_1}(x)$ between $\rho_{R_2}(x)$ and $\rho_{R_1}(x)$ is actually the rotation angle of $E^{-1}$.

\end{proof}
\section{Appendix}
\begin{prop}\label{real-sq}
Let $\phi$ be a real positive definite function on a compact group $G$,
then there exists a real function $f$ such that $\phi=f*\breve f$.
\end{prop}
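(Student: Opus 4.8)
The plan is to reuse the construction of Theorem \ref{square-root} but with a single \emph{canonical} choice of square root, and then to show that the reality of $\phi$ forces the resulting $f$ to be real. Concretely, for every $\sigma\in\widehat G$ let $\Lambda(\sigma)$ be the positive semidefinite Hermitian square root of $\widehat\phi(\sigma)$ (the same normalisation as in Theorem \ref{square-root}), which exists because $\widehat\phi(\sigma)$ is Hermitian positive definite by Proposition \ref{structure of positive definite}, part a); then set $f:=\sum_{\sigma\in\widehat G}\sqrt{\dim\sigma}\,\tr(\Lambda(\sigma)D^\sigma)$. The verification that $f\in L^2(G)$ (via $\|f^\sigma\|_2^2=\tr\widehat\phi(\sigma)$ and \paref{gangolli-true}) and that $\phi=f\ast\breve f$ (via Remark \ref{aggiunto} and \paref{conv1}) is word for word that of Theorem \ref{square-root} and uses neither bi-$K$-invariance nor the particular square root chosen. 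The only content to be added is therefore that \emph{this} canonical choice yields $\overline f=f$.

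First I would translate the reality of $\phi$ into a statement about the coefficients $\widehat\phi(\sigma)$. The conjugation $\sigma\mapsto\bar\sigma$, characterised by $\overline{D^\sigma}\cong D^{\bar\sigma}$, is an involution of $\widehat G$, so I split $\widehat G$ into self-conjugate classes and genuine pairs $\{\sigma,\bar\sigma\}$ with $\sigma\ne\bar\sigma$. On a pair I fix the representative $D^{\bar\sigma}:=\overline{D^\sigma}$; conjugating the defining formula \paref{Fourier coefficient} and using $\overline\phi=\phi$ then gives $\widehat\phi(\bar\sigma)=\overline{\widehat\phi(\sigma)}$. On a self-conjugate class I fix a unitary intertwiner $J_\sigma$ with $\overline{D^\sigma(g)}=J_\sigma D^\sigma(g)J_\sigma^{-1}$; the same conjugation of \paref{Fourier coefficient} yields $\overline{\widehat\phi(\sigma)}=J_\sigma\widehat\phi(\sigma)J_\sigma^{-1}$.

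The key algebraic remark I would isolate is that the positive semidefinite square root is canonical, hence commutes both with complex conjugation and with unitary conjugation: $\overline{A^{1/2}}=(\overline A)^{1/2}$ and $(UAU^{-1})^{1/2}=UA^{1/2}U^{-1}$ for positive semidefinite $A$ and unitary $U$, both by uniqueness of the positive root (the real positive scalar in the normalisation plays no role). Feeding in the two relations above gives $\overline{\Lambda(\sigma)}=\Lambda(\bar\sigma)$ on pairs and $\overline{\Lambda(\sigma)}=J_\sigma\Lambda(\sigma)J_\sigma^{-1}$ on self-conjugate classes. Computing $\overline f(g)=\sum_\sigma\sqrt{\dim\sigma}\,\tr(\overline{\Lambda(\sigma)}\,\overline{D^\sigma(g)})$ and comparing termwise: on a self-conjugate class the two factors $J_\sigma$ cancel and invariance of the trace under conjugation gives back $\tr(\Lambda(\sigma)D^\sigma(g))$, while on a pair the summands indexed by $\sigma$ and $\bar\sigma$ are simply interchanged (using $\overline{D^{\bar\sigma}}=D^\sigma$ and $\dim\bar\sigma=\dim\sigma$). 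Hence $\overline f=f$, which is what is required.

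The only delicate point is the self-conjugate case of quaternionic type, where $H_\sigma$ carries no real structure and one cannot just take a real symmetric root in a real basis; this is precisely where the canonicity of the positive square root does the work, since the single uniform choice $\Lambda(\sigma)=\widehat\phi(\sigma)^{1/2}$ is automatically compatible with $J_\sigma$ across all three Frobenius--Schur types. I would also note that the $L^2$-membership of $f$ relies, through \paref{gangolli-true} and Theorem \ref{gangolli}, on the continuity of $\phi$, which is available in the only use made of this proposition (the last statement of Theorem \ref{square-root}).
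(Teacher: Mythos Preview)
Your argument is correct. The idea of taking the \emph{unique} positive semidefinite square root $\Lambda(\sigma)=\widehat\phi(\sigma)^{1/2}$ everywhere and then checking that this canonical choice is automatically compatible with conjugation (via $\overline{A^{1/2}}=(\overline A)^{1/2}$ and $(UAU^{-1})^{1/2}=UA^{1/2}U^{-1}$) is clean and works in all cases at once.

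The paper's proof reaches the same conclusion by a more hands-on Frobenius--Schur case analysis. For complex type it pairs $\sigma$ with $\sigma^*$ exactly as you pair $\sigma$ with $\bar\sigma$; for real type it passes to a real basis so that $D^\sigma$ and $\widehat\phi(\sigma)$ become real matrices and a real square root is immediate; for quaternionic type it works in a basis $v_1,\dots,v_k,Jv_1,\dots,Jv_k$, observes the resulting $2\times 2$ block structure \paref{eq blocchi-gen} of both $D^\sigma$ and $\widehat\phi(\sigma)$, and then checks by diagonalisation that the Hermitian square root inherits the same block structure, which forces $\tr(\widehat\phi(\sigma)^{1/2}D^\sigma(g))$ to be real. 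Your approach absorbs the real and quaternionic cases into a single ``self-conjugate'' step by using the linear intertwiner $J_\sigma$ and trace invariance, so you avoid the explicit basis computations; the price is that one must appeal to the uniqueness of the positive root rather than see the reality concretely. Both proofs ultimately rest on the same fact---that in the quaternionic case the Hermitian square root commutes with the structure map---but yours packages it more uniformly.
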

\begin{proof}
Let
$$
\phi(g)=\sum_{\sigma\in\widehat G}
\phi^\sigma(g)=\sum_{\sigma\in\widehat
G}\sqrt{\dim
\sigma}\,\tr(\widehat\phi(\sigma)D^\sigma(g))
$$
be the Peter-Weyl decomposition of $\phi$ into isotypical components. We know
that the Hermitian matrices $\widehat\phi(\sigma)$ are positive
definite, so that there exist square roots
$\widehat\phi(\sigma)^{1/2}$ i.e. matrices such that
$\widehat\phi(\sigma)^{1/2}{\widehat\phi(\sigma)^{1/2}}^*=\widehat\phi(\sigma)$
and the functions
$$
f(g)=\sum_{\sigma\in\widehat G}\sqrt{\dim
\sigma}\,\tr(\widehat\phi(\sigma)^{1/2}D^\sigma(g))
$$
are such that $\phi=f*\breve f$. We need to prove that these square
roots can be chosen in such a way that $f$ is also real. Recall that
a representation of a compact group $G$ can be classified as being
of real, complex or quaternionic type (see \cite{B-D}, p. 93 e.g. for details).

\tin{a)} If $\sigma$ is of real type then there exists a conjugation $J$
of $H_\sigma\subset L^2(G)$ such that $J^2=1$. A conjugation is a
$G$-equivariant antilinear endomorphism. It is well known that in
this case one can choose a basis $v_1,\dots, v_{d_\sigma}$ of
$H_\sigma$ formed of ``real'' vectors, i.e. such that $Jv_i=v_i$. It
is then immediate that the representative matrix $D^\sigma$ of the
action of $G$ on $H_\sigma$ is real. Actually, as $J$ is equivariant
and $Jv_i=v_i$,
$$
D^\sigma_{ij}(g)=\langle gv_j,v_i\rangle=\overline{\langle
Jgv_j,Jv_i\rangle}=\overline{\langle
gv_j,v_i\rangle}=\overline{D^\sigma_{ij}(g)}\ .
$$
With this choice of the basis, the matrix $\widehat\phi(\sigma)$ is real
and also $\widehat\phi(\sigma)^{1/2}$ can be chosen to be real and
$g\mapsto\sqrt{\dim \sigma}\,
\tr(\widehat\phi(\sigma)^{1/2}D^\sigma(g))$ turns out to be real
itself.
\tin{b)} If $\sigma$ is of complex type, then
it is not isomorphic to its dual representation $\sigma^*$.
As $D^{\sigma^*}(g):=D^\sigma (g^{-1})^t=
\overline{D^\sigma (g)}$ and $\phi$ is real-valued,  we have
\begin{equation*}\label{easy}
\widehat \phi(\sigma^*) = \overline{\widehat \phi(\sigma)}\ ,
\end{equation*}
so that we can choose $\widehat \phi(\sigma^*)^{1/2} = \overline{\widehat \phi(\sigma)^{1/2}}$ and, as $\sigma$ and $\sigma^*$ have the same dimension, the function
$$
g\mapsto \sqrt{\dim \sigma} \tr(\widehat \phi(\sigma)^{1/2} D^\sigma(g))+ \sqrt{\dim \sigma^*} \tr(\widehat \phi(\sigma^*)^{1/2} D^{\sigma^*}(g))
$$
turns out to be real.
\tin{c)} If $\sigma$ is quaternionic, let $J$ be the corresponding
conjugation. It is immediate that the vectors $v$ and $Jv$ are
orthogonal and from this it follows that
$\dim \sigma=2k$ and that there exists an orthogonal basis for $H_\sigma$
of the form
\begin{equation}\label{anti-basis}
v_1,  \dots, v_k, w_1=J(v_1), \dots, w_k=J(v_k)\ .
\end{equation}
In such a basis the representation matrix of any linear transformation $U:H_\sigma\to H_\sigma$ which commutes with $J$ has the form
\begin{equation}\label{eq blocchi-gen}
\begin{pmatrix}
A  &  B\\
\noalign{\vskip4pt}
-\overline{B} & \overline{A} \\
\end{pmatrix}
\end{equation}
and in particular $D^\sigma(g)$ takes the form
\begin{equation}\label{eq blocchi}
D^\sigma(g)=\begin{pmatrix}
A(g)  &  B(g)\\
\noalign{\vskip4pt}
-\overline{B(g)} & \overline{A(g)} \\
\end{pmatrix}\ .
\end{equation}
By \paref{eq blocchi} we have also, $\phi$ being real valued,
\begin{equation}\label{matr per fi}
\widehat \phi(\sigma) = \begin{pmatrix}
\int_G \phi(g)A(g^{-1})\,dg  &  \int_G \phi(g)B(g^{-1})\,dg\\
\noalign{\vskip6pt}
 -\int_G \phi(g)\overline{B(g^{-1})}\,dg & \int_G
\phi(g)\overline{A(g^{-1})}\,dg \\
\end{pmatrix}:=\begin{pmatrix}
\phi_A   &  \phi_B \\
\noalign{\vskip4pt}
-\overline{ \phi_B} & \overline{\phi_A } \\
\end{pmatrix}\ .
\end{equation}

More interestingly, if $\phi$ is any function such that, with respect to the basis above,
$\widehat\phi(\sigma)$ is of the form \paref{matr per fi}, then the corresponding component
$\phi^\sigma$ is necessarily a real valued function: actually
$$
\dlines{
\phi^\sigma(g)=\tr(\widehat\phi(\sigma)D^\sigma(g))  =\cr
=\tr\bigl(\phi_A A(g)-\phi_B\overline {B(g)}-\overline{\phi_B}B(g)+\overline {\phi_A}\,\overline {A(g)}\bigr)=\tr\bigl(\phi_A A(g)+\overline{\phi_A A(g)}\bigr)-
\tr\bigl(\phi_B\overline {B(g)}+\overline{\phi_B\overline {B(g)}}\bigr)\ .\cr
}
$$
We now prove that the Hermitian square root, $U$ say, of $\widehat\phi(\sigma)$ is of the form
\paref{matr per fi}. Actually note that $\widehat\phi(\sigma)$ is self-adjoint, so that it
can be diagonalized and all its eigenvalues are real (and positive by Proposition
\ref{structure of positive definite} a)). Let $\lambda$ be an eigenvalue and $v$ a
corresponding eigenvector. Then, as
$$
\widehat\phi(\sigma)Jv=J\widehat\phi(\sigma)=J\lambda v=\lambda Jv\ ,
$$
$Jv$ is also an eigenvector associated to $\lambda$. Therefore there exists a basis as in
\paref{anti-basis} that is formed of eigenvectors, i.e. of the form
$v_1,\dots,v_k, w_1,\dots,w_k$ with $Jv_j=w_j$ and
$v_j$ and $w_j$ associated to the same positive eigenvalue $\lambda_j$. In this basis
$\widehat\phi(\sigma)$ is of course diagonal with the (positive) eigenvalues on the diagonal.
Its Hermitian square root $U$ is also diagonal, with the square roots of the eigenvalues
on the diagonal. Therefore $U$ is also the form \paref{matr per fi} and the corresponding
function $\psi(g)=\tr(UD(g))$ is real valued and such that $\psi*\breve\psi=\phi^\sigma$.
\begin{remark}\label{rod}\rm Rodrigues formula for the Legendre polynomials states that
$$
P_\ell(x)=\frac 1{2^\ell\ell!}\,\frac
{d^\ell\hfil}{dx^\ell}\,(x^2-1)^\ell\ .
$$
Therefore
\begin{equation}\label{integral}
\int_0^1P_\ell(x)\, dx=\frac 1{2^\ell\ell!}\,\frac
{d^{\ell-1}\hfil}{dx^{\ell-1}}\,(x^2-1)^\ell\Big|^1_0\ .
\end{equation}
The primitive vanishes at $1$, as the polynomial $(x^2-1)^\ell$ has
a zero of order $\ell$ at $x=1$ and all its derivatives up to the
order $\ell-1$ vanish at $x=1$. In order to compute the primitive at
$0$ we make the binomial expansion of $(x^2-1)^\ell$ and take the
result of the $(\ell-1)$-th derivative of the term of order $\ell-1$
of the expansion. This is actually the term of order $0$ of the
primitive. If $\ell$ is even then $\ell-1$ is odd so that this term
of order $\ell-1$ does not exist (in the expansion only even
powers of $x$ can appear). If $\ell=2m+1$, then the term of order
$\ell-1=2m$ in the expansion is
$$
(-1)^m{2m+1\choose m} z^{2m}
$$
and the result of the integral in \paref{integral} is actually, as
given in \paref{2m},
$$
(-1)^{m+1}\,\frac {(2m)!}{2^{2m+1}(2m+1)!}\,{2m+1\choose m}\ \cdotp
$$
\end{remark}
\end{proof}

\bibliography{bibbase}
\bibliographystyle{amsplain}

\noindent Dipartimento di Matematica, Universit\`a di Roma Tor Vergata, Via della Ricerca Scientifica, 00133 Roma, Italy
\smallskip

{\tt baldi@mat.uniroma2.it}
\smallskip

{\tt rossim@mat.uniroma2.it}
\end{document}